\journal{Elsevier}
\begin{document}

\newcommand{\del}{\partial}
\renewcommand{\theta}{\vartheta}
\renewcommand{\phi}{\varphi}
\newcommand{\vecc}[2]{\left ( \begin{array}{c}#1\\#2\\ \end{array}\right )}
\newcommand{\veccc}[3]{\left ( \begin{array}{c}#1\\#2\\#3\\ \end{array}\right )}
\newcommand{\dd}{\mathrm{d}}
\newcommand{\ee}{\mathrm{e}}
\newcommand{\ii}{\mathrm{i}}
\newcommand{\id}{\mathbb{1}}
\newcommand{\atanh}{\,\text{artanh}\,}
\newcommand{\atan}{\arctan}
\newcommand{\back}{\!\!\!}
\newcommand{\nicefrac}[2]{#1 / #2}
\newcommand{\bboxed}[1]{\text{\textsc{Conjecture}: }\boxed{\boxed{#1}}}
\renewcommand{\and}{\wedge}
\newcommand{\primitive}{\text{\textsc{primitive}}}
\newcommand{\lint}{\int\limits}
\renewcommand{\div}{\mathrm{div\,}}
\renewcommand{\vec}{\mathbf}
\newcommand{\todo}[1]{{\color{red}TODO: #1}}

\newcommand{\new}[1]{{\color{black}#1}}
\newcommand{\RI}[1]{{\color{black}#1}}
\newcommand{\RII}[1]{{\color{black}#1}}

\definecolor{mygreen}{rgb}{0,0.6,0}
\definecolor{mygray}{rgb}{0.5,0.5,0.5}
\definecolor{mymauve}{rgb}{0.58,0,0.82}

\newtheorem{theorem}{Theorem}
\newtheorem{definition}[theorem]{Definition}
\newtheorem{proposition}[theorem]{Proposition}
\newtheorem{lemma}[theorem]{Lemma}
\newtheorem{notation}[theorem]{Notation}
\newtheorem{question}[theorem]{Question}
\newtheorem{remark}[theorem]{Remark}
\newtheorem{corollary}[theorem]{Corollary}
\newtheorem{example}[theorem]{Example}

\newcommand{\splitsymbol}{\id}

\begin{frontmatter}

  \title{Genuinely multi-dimensional stationarity preserving  	\\ Finite Volume formulation for nonlinear hyperbolic PDEs}

\author[lab1]{Wasilij Barsukow}
\author[lab2]{Mirco Ciallella}
\author[lab3]{Mario Ricchiuto}
\author[lab4]{Davide Torlo}

\address[lab1]{Institut de Mathématiques de Bordeaux, Université de Bordeaux, CNRS UMR 5251, Talence, France}
\address[lab2]{Laboratoire Jacques-Louis Lions, Université Paris Cité, CNRS UMR 7598, Paris, France}
\address[lab3]{Centre Inria de l'Université de Bordeaux, CNRS UMR 5251, Talence, France}
\address[lab4]{Dipartimento di Matematica, Università di Roma La Sapienza, Rome, Italy}


\begin{abstract}
Classical Finite Volume methods for multi-dimensional problems include stabilization (e.g.\ via a Riemann solver), that is derived by considering several one-dimensional problems in different directions. Such methods therefore ignore a possibly existing balance of contributions coming from different directions, such as the one characterizing multi-dimensional stationary states. Instead of being preserved, they are usually diffused away by such methods. Stationarity preserving methods use a better suited stabilization term that vanishes at the stationary state, allowing the method to preserve it. 
This work presents a general approach to stationarity preserving Finite Volume methods for nonlinear conservation/balance laws. 
\new{It is based on a multi-dimensional stationarity preserving quadrature strategy that allows to naturally introduce
genuinely multi-dimensional  numerical fluxes.}
The new methods are shown to significantly outperform existing ones even if the latter are of higher order of accuracy and even on non-stationary solutions.
\end{abstract}
\begin{keyword}
Stationarity preservation \sep  Finite Volume \sep Multi-dimensional well-balancing \sep Hyperbolic equations \sep Global flux \sep residual distribution
\end{keyword}

\end{frontmatter}

\section{Introduction}

This paper focuses on the numerical solution of nonlinear hyperbolic systems of conservation laws in two dimensions:
\begin{align}
  \del_t q + \del_x f + \del_y g &= 0, \label{eq:conservationlaw}
\end{align}
where $q$, $f$ and $g$ are the vectors of conservative variables and fluxes.
Numerical methods for hyperbolic partial differential equations (PDEs) need numerical diffusion
to achieve entropy stability and in order to deal with solutions characterized by strong gradients.
The majority of numerical methods for multi-dimensional problems, though, are developed following
a dimension-by-dimension approach, meaning that the numerical diffusion is usually derived in a
one-dimensional framework and that the diffusion term associated to an edge (or a face, in 3D) usually involves only two states.
Standard numerical methods with one-dimensional Riemann solvers typically introduce 
a diffusion term of the type 
\begin{align}
 \del_t q + \del_x f + \del_y g &= \Delta x \del_x(\nu_x \del_x q) + \Delta y \del_y(\nu_y \del_y q),
\end{align}
where $\Delta x$ and $\Delta y$ provide the size of the discretization, 
and $\nu_x$ and $\nu_y$ represent the diffusion coefficients,
which are often chosen proportional to the spectral radius of the flux Jacobian.
This one-dimensional approach does not take into account possible multi-dimensional
features of the numerical solution, such as the stationary states characterized by
a balance of contributions coming from different directions \cite{barsukow2019stationarity}.
For equation \eqref{eq:conservationlaw}, stationary states are governed by
\begin{align}
 \del_x f + \del_y g &= 0. \label{eq:stationary}
\end{align}
For classical methods, with the two-dimensional diffusion term designed 
following one-dimensional approaches, the solution will be completely
diffused instead of being kept stationary \cite{barsukow2021truly,barsukow2025structure}.
In contrast to Equation \eqref{eq:stationary}, the discrete stationary states are characterized by the much more restrictive conditions $\del_x f = 0$ and $\del_y g = 0$. States where $\del_x f \neq 0$ is balanced by $-\del_y g$ is not a stationary state of the numerical method.
This can be prevented by choosing more sophisticated diffusion operators 
\cite{morton2001vorticity,sidilkover2002factorizable,jeltsch2006curl,mishra2011constraint}. Such methods are called stationarity preserving \cite{barsukow2019stationarity}. 

\RII{Typically, solutions to \eqref{eq:stationary} form a very large set; the equations might even be underdetermined, as in the case, for example, in linear acoustics. 
In that case, they reduce merely to the condition that the velocity field must be divergence-free. If the divergence is understood weakly, this operator is not invertible even if all boundary conditions are prescribed. 
Consequently, no numerical method can exactly preserve all stationary states whenever their set is so rich. 
If we consider again  the example of linear acoustics, given a finite set of point values or averages, it is fundamentally impossible to establish whether they belong to a divergence-free vector field or not. }

\new{Stationarity preserving methods guarantee that the stationary states of a numerical method are a discretization of \emph{all} the stationary states of the PDE. 
The adjective \emph{all} is very important here, and consistency of a numerical method is not enough to guarantee this. 
As shown in \cite{Barsukow_nodeconservative2025}, an important necessary condition for a method to be stationarity preserving is that there exists a finite  set of local approximations 
of \eqref{eq:stationary} that characterize part of the kernel of the numerical method. In other words, when these local operators vanish, then the consistent part
and the numerical dissipation of the numerical method vanish simultaneously. For this condition to be also sufficient,  and for the stationary state uniquely defined,
the number of such approximations should be equal to the total number of unknowns. In the context of linear problems, an alternative definition of a stationarity preserving method (using the discrete Fourier transform) is given in \cite{barsukow2019stationarity}.

These properties can hardly be obtained in the context of classical methods.
As described above, all too often the stationary states of a (consistent) numerical method are discretizations of $\del_x f = 0$ and $\del_y g = 0$, instead of \eqref{eq:stationary}.
Consider, as another example, the trivial equation $\del_t q = 0$, which keeps everything stationary. A numerical method that adds diffusion like $\del_t q = \Delta x \del_x^2 q$ will be consistent, but then only those $q$ which are affine in space will be stationary states of the numerical method. In this paper, we aim to develop nonlinear methods whose stationary states are discretizations of all the stationary states of the PDE, while being stable under explicit time integration.}

Recent examples of stationarity preserving diffusion operators were developed for geostrophic equilibria in the linear and nonlinear case \cite{audusse2018analysis,audusse2025energy}.
A connection to these equilibria in the context of low Mach number limit of the Euler equations, which is related to the long-time limit of linear acoustics, was provided in \cite{jung2022steady,jung2024behavior} through the preservation of discrete divergence with ad-hoc functional spaces. Early examples of stationarity preserving methods for nonlinear conservation laws can be found in \cite{barsukow2018low}.
So far, however, no general theory for the agnostic detection of stationary states of nonlinear multi-dimensional hyperbolic partial differential equations is available.

The method presented in this work \new{exploits an idea formulated in  \cite{barsukow2025structure}, which allows
to modify the quadrature strategy in a way that systematically leads to a stationarity preserving method. This modification
is related to a  technique  often referred to as the global flux method  \cite{gascon2001construction,caselles2009flux,cheng2019new}}  initially introduced for 
hyperbolic  balance laws in one dimension,
\begin{align}
  \del_t q + \del_x f = s, \label{eq:balancelaw}
\end{align}
with the original goal of developing well-balanced methods 
\cite{audusse2004fast,berthon2016fully,castro-pares-jsc20},  
and the treatment of source terms present in the mathematical model. The global flux has already been successfully applied to different contexts and numerical methods \cite{chertock2018well,chertock2022well,ciallella2023arbitrary,mantri2024fully,micalizzi2024novel,kazolea2025approximate,alr2025} to preserve one dimensional equilibria. 
\new{The term ``global''  is  unfortunately misleading. Indeed, quite interestingly this approach leads to fully local discretizations (see e.g. \cite{mantri2024fully,micalizzi2024novel,barsukow2025structure,alr2025,brt25b}),
and indeed it is related to similar techniques used in the analysis of well balanced and asymptotic preserving methods, and referred to as the spatial localization of source terms e.g. in \cite{Gosse-toscani03}). These methods also have very natural connections with the so-called residual distributions schemes 
\cite{Abgrall2022,amr25}.}

\new{The underlying idea of these techniques is to recast}  the source term as a flux $R$:
\begin{align}
\partial_xR = s\Rightarrow R := \int^x s \dd x.
\end{align}
In this framework, equation \eqref{eq:balancelaw} can be recast as
\begin{align}
  \del_t q + \del_x (f - R) = 0,\label{eq:GFbalancelaw}
\end{align}
where discrete steady states satisfy the relation
\begin{align}
 \del_x (f - R) = 0 \Leftrightarrow f - R \equiv \mathscr F_0, 
\end{align}
with $\mathscr F=f-R$ \new{often referred to as   global flux, as it embeds the entire evolution operator,}
 and $\mathscr F_0 = \mathscr F(x_0)$ for a given $x_0$ in the domain.
In the same spirit, a similar approach that integrates the Coriolis term into an apparent bathymetry term was also developed in \cite{bouchut2004frontal}.

The concept of well-balancing is a particular case of the preservation of general stationary solutions. 
The overarching idea is to design numerical schemes in which the artificial diffusion vanishes at relevant equilibria.
The development of well-balanced schemes in one-dimensional problems has reached high levels of maturity in the last decades, but the multi-dimensional extensions are often tackled with trivial dimension-by-dimension approaches \cite{chertock2018well,ciallella2025high,michel2021two,mantri2024fully}, which only allows the preservation of 1D-like equilibria.
\new{In \cite{barsukow2025structure,brt25b} some of the present authors propose, in the context of stabilized finite elements, 
a modified quadrature approach that allows to guarantee the  multi-dimensional stationarity preserving properties of the resulting discretization.
The underlying idea has some connections with the above mentioned global flux methods.
In two space dimensions, it is based on  the idea of recasting the problem as}
\begin{align}
  \begin{split}
  \del_t q + \del_x f + \del_y g &= \del_t q + \del_y \left(\int^y \del_x f \, \dd y  + g\right)  \\ 
  &= \del_t q + \del_x  \left(f +    \int^x \del_y    g \, \dd x \right) = 0. \label{eq:conservationlawglobalflux0}
  \end{split}
\end{align}
\new{By symmetry, one combines the two above formulations, replacing the two-dimensional   the conservation law \eqref{eq:conservationlaw} with} 
\begin{align}
  \del_t q + \del_x f + \del_y g = \del_t q + \del_{x}\del_{y} (F + G) = 0, \label{eq:conservationlawglobalflux}
\end{align}
by defining
\begin{align}
 F &:= \int^y f \, \dd y, & G &:= \int^x g \, \dd x. \label{eq:globalflux}
\end{align}
The new divergence operator $\del_{x}\del_{y} (F + G)$ now is easy to preserve at the discrete level.
Thanks to this formulation, it becomes also straightforward to consider multi-dimensional balance laws with source terms,
\begin{align}
  \del_t q + \del_x f + \del_y g = s. \label{eq:2Dbalancelaw}
\end{align}
In this case, the source flux $R$ can be defined as
\begin{align}\label{eq:globalsource} 
R := \int^x \int^y s \dd x \dd y,
\end{align}
and directly included in the \new{modified divergence operator}. Setting 
\begin{equation}\label{eq:funnyF_def} 
\mathscr F= F + G - R,
\end{equation}
the multi-dimensional \new{stationarity preserving formulation of the original}    problem   becomes again
\begin{equation}\label{eq:mD-GF-form}
  \del_t q + \del_{xy}\mathscr F=  0.
\end{equation}

In this work, we present how this idea can be used to design first-order finite volume methods 
preserving multi-dimensional steady states, not known a priori, \textbf{for general nonlinear hyperbolic PDEs}.
A thorough analysis of the method is presented for linear problems showing a link with
other stationarity preserving methods, as well as a discrete energy estimate.
The approach proposed here naturally leads to the introduction of nonlinear \textbf{genuinely multi-dimensional fluxes at cell corners},
which have been shown to provide fundamental enhancements in the numerical solutions, and enjoy many
theoretical properties (\cite{gaburro2025multidRS,Barsukow_nodeconservative2025,amr25}).
However, differently from previous works, the formulation proposed here naturally
leads to corner fluxes, without any hypotheses on the type of quadrature, \new{or on the Riemann fluxes}. 
This approach is   a starting point for the development of new families
of stationarity preserving high-order methods based on high-degree polynomial reconstruction \cite{ciallella2021arbitrary,ciallella2023arbitrary}, or
 discontinuous Galerkin methods \cite{mantri2024fully,zhang2025well}. \new{Throughout the paper, we will refer to the new method 
 as stationarity preserving formulation or global flux quadrature as in \cite{barsukow2025structure,brt25b}.  
 However, note that the discretization is fully local, as we will show in detail.}

The paper is organized as follows.
In section \ref{sec:models}, we present the examples of PDEs that will be considered when assessing the performance of the method experimentally.
In section \ref{sec:GF1d}, we recall the global flux method in a one-dimensional framework 
for hyperbolic balance laws. In section \ref{sec:GF2d}, we present the  \new{two-dimensional stationarity preserving/global flux quadrature approach for}
hyperbolic PDEs. Here, we discuss the finite volume formulation, 
the stabilization technique, boundary conditions, the treatment of source terms, as well as stability and consistency of the method for a linear model.
In section \ref{sec:FVcomparison}, we present the standard finite volume method with piecewise constant and 
piecewise linear reconstructions used for comparison with the global flux method.
Several numerical experiments are presented in section \ref{sec:numerical} to show the performance of the method.
Finally, we draw some conclusions in section \ref{sec:conclusions}.

\section{Mathematical models}\label{sec:models}
The numerical method presented in this work is rather general and, in order to show its potential, 
we exemplify it on several mathematical models described by both linear and nonlinear hyperbolic systems. 
In particular, herein we will focus on three systems: linear acoustics, Euler equations for gas dynamics and the shallow water equations. For all of them, we focus on two-dimensional problems.

\subsection{Linear acoustic system}\label{sec:acoustic}

The system of linear acoustic is a simple model that directly embeds non-trivial divergence-free steady states.
It can be written in the following 2D and vectorial forms as: 
\begin{equation}\label{eq:LinAc}
\begin{cases}
\del_t u + \del_x p  &= 0,\\
\del_t v + \del_y p  &= 0,\\
\del_t p + \del_x u + \del_y v &= 0,
\end{cases}\qquad \qquad \qquad 
\begin{cases}
  \del_t \mathbf{v} + \nabla p  &= 0,\\
  \del_t p + \nabla\cdot\mathbf{v} &= 0, \\
\end{cases}\qquad  
\end{equation}
where $p$ is the pressure and $\mathbf{v}=(u,v)$ is the velocity.
The system can also be written in the compact form \eqref{eq:conservationlaw} with
\begin{equation}
q = \begin{bmatrix} u \\ v \\ p\end{bmatrix}, \qquad f = \begin{bmatrix} p  \\ 0  \\ u  \end{bmatrix}, 
\qquad g = \begin{bmatrix}  0  \\ p  \\ v  \end{bmatrix}.
\end{equation}
The steady states of this system are given by
\begin{equation}
\del_t q \equiv 0 \qquad \Leftrightarrow \qquad \nabla\cdot\mathbf{v}\equiv 0 \quad \text{and} \quad p\equiv p_0 = \text{const}.
\end{equation}

\subsection{Euler equations}\label{sec:euler}

The Euler equations are a simplification of the full Navier-Stokes system that do not include viscosity effects.
Their use is widespread for the simulation of compressible gas dynamics.
The system can be written in vectorial form as:
\begin{align}\label{eq:Euler}
	\begin{cases}
\del_t \rho + \nabla\cdot(\rho \mathbf{v}) &= 0,\\
\del_t (\rho \mathbf{v}) + \nabla\cdot\left(\rho\mathbf{v}\otimes\mathbf{v} + p\mathrm{I}\right) &= 0,\\
\del_t (\rho E) + \nabla\cdot(\rho H \mathbf{v}) &= 0,
	\end{cases}
\end{align}
having denoted by $\rho$ the density, by $\mathbf{v}$ the velocity field, by $E=e+\|\mathbf{v}\|^2/2$ the specific total energy, being $e$ the specific internal energy and $I$ is the identity matrix.
Finally, the total specific enthalpy is $H = h +\|\mathbf{v}\|^2/2$, with $h= e+p/\rho$ the specific enthalpy. 
To close the system, we use the classical perfect gas equation of state $p = (\gamma-1)\rho e$ with $\gamma$ 
the constant ratio of specific heats ($\gamma=1.4$ for air).

The nonlinear system of Euler equations can also be recast in the compact form  \eqref{eq:conservationlaw} with
\begin{equation}
  q = \begin{bmatrix} \rho \\ \rho u \\ \rho v \\ \rho E \end{bmatrix}, \qquad f = \begin{bmatrix} \rho u  \\  \rho u^2+ p \\  \rho uv  \\ \rho H u \end{bmatrix}, 
  \qquad g = \begin{bmatrix} \rho v  \\  \rho uv \\  \rho v^2+ p \\ \rho H v \end{bmatrix}. 
\end{equation}
Steady states of the Euler equations are more complex but, after some manipulations, 
the smooth steady states can be characterized by the following relations:
\begin{equation}
 \nabla\cdot(\rho \mathbf{v}) = 0, \qquad (\rho\mathbf{v}\cdot\nabla) \mathbf{v} +  \nabla p = 0, \qquad \mathbf{v}\cdot\nabla  H = 0. 
\end{equation}

\subsection{Shallow water system}\label{sec:SW}

The Saint-Venant or shallow water equations describe the dynamics of hydrostatic free surface waves influenced by gravity. 
This model is valid under the hypothesis of very large wavelengths, or very shallow depths,
and is applied in various engineering fields, including river and estuarine hydrodynamics, 
urban flood management, and tsunami risk assessment. In particular, when working with large scale problems, this simplified model becomes 
crucial to speed-up the computational time.

The system can be written in vectorial form as:
\begin{equation}\label{eq:SWE}
\begin{cases}
  \del_t h + \nabla\cdot(h\mathbf{v}) &= 0,\\
  \del_t (h\mathbf{v}) + \nabla\cdot\left(h\mathbf{v}\otimes\mathbf{v} + \frac{1}{2}gh^2\mathrm{I}\right) &=-gh\nabla b,
\end{cases}\qquad
\end{equation}
where $h$ is the water height, $\mathbf{v}$ the velocity field, $b$ is the bathymetry and $g$ is the gravity constant.
The system can also be written in the classical compact notation \eqref{eq:2Dbalancelaw} with
\begin{equation}
q = \begin{bmatrix} h \\ hu \\ hv\end{bmatrix}, \qquad f = \begin{bmatrix} hu  \\  hu^2+ \frac{1}{2}gh^2 \\  huv \end{bmatrix}, 
\qquad g = \begin{bmatrix} hv  \\  huv \\  hv^2+ \frac{1}{2}gh^2  \end{bmatrix}, \qquad s = \begin{bmatrix} 0 \\ -gh\del_x b \\ -gh\del_y b\end{bmatrix}.
\end{equation}
This system admits a large variety of equilibria depending on the interaction between the flux and the source. 
The most studied equilibria in the context of well-balanced methods are the so-called ``lake at rest'' 
states, which are characterized by a constant free surface level $\eta:=h+b\equiv \eta_0$ 
and a zero velocity $\mathbf{v}\equiv 0$. 
However, in the presence of a non-flat bathymetry, the system can also admit non-trivial equilibria,
which are characterized by a non-zero velocity and a non-flat free surface level:
\begin{align}
 \nabla \cdot (h \vec v) &= 0 & (\vec v \cdot \nabla) \vec v + g \nabla (h+b) &= 0
\end{align}

Several works have been devoted to the study of these equilibria in one dimension or in 
a quasi-1D framework \cite{michel2017well,mantri2024fully,ciallella2025high}.
In this work, we are interested in truly multi-dimensional well-balanced schemes that are capable 
of preserving all these equilibria at the discrete level.

\section{Global flux for 1D balance laws}\label{sec:GF1d}

In this section, we recall the main principle of the global flux method and its initial usage in a 1D framework.
Consider a general nonlinear balance law \eqref{eq:balancelaw} and 
define a global flux $\mathscr F$ as
\begin{align}
 \mathscr F &= f - \int^x s \dd x,
\end{align}
such that \eqref{eq:balancelaw} can now be written in a \new{pseudo}-conservative form as 
\begin{align}
 \del_t q + \del_x \mathscr F &= 0.
\end{align}
Steady states given by $\del_t q = 0$
are equivalently characterized by the condition $\del_x\mathscr F = 0 \Leftrightarrow \mathscr F \equiv \mathscr F_0\in \mathbb R$.

In a finite volume framework, the computational domain $\Omega$ is split into $N$ cells 
and the equation \eqref{eq:balancelaw} is integrated over each cell $C_i=\left[x_{i-\frac12},x_{i+\frac12}\right]$:
\begin{align}\label{eq:gf1dsemidiscr}
  \frac{\dd}{\dd t} \bar q_i + \frac{\widehat{\mathscr F}_{i+\frac12} - \widehat{\mathscr F}_{i-\frac12}}{\Delta x} &= 0,
\end{align}
where the cell average is defined as
\begin{align}
   \bar q_i := \frac{1}{\Delta x} \int_{x_{i-\frac12}}^{x_{i+\frac12}} q \, \dd x.
\end{align}
The numerical global flux $\widehat{\mathscr F}_{i+\frac12}$ is considered to be a function of the two values of the global flux $\mathscr F^L_{i+ \frac12}$ and $\mathscr F^R_{i+ \frac12}$ 
reconstructed at both sides of interface $x_{i+ \frac12}$. For piecewise constant reconstructions of the global flux one simply has $\mathscr F^L_{i+ \frac12}=\mathscr F_i$ and $\mathscr F^R_{i+ \frac12}=\mathscr F_{i+1}$.
\begin{remark}[Numerical global flux]
It is important to underline that structure preservation can only be achieved if the interface global flux $\widehat{\mathscr F}_{i+\frac12}$ 
\new{is constant $\forall\;i$ at steady state. That implies that numerical dissipation should also vanish in correspondence of this state.
Unless this condition is imposed with some ad hoc modification (see e.g. \cite{alr2025}), 
this is only possible if the numerical dissipation}
depends only on global fluxes $\{ \mathscr F_j \}_{j\in \mathbb Z}$ in the cells, and not on the values $\{ q_j \}_{j\in \mathbb Z}$ of the conservative variables.
This is due to the fact that, at equilibria, only global fluxes are constant while conservative variables may vary.
\end{remark}
In our previous work \cite{ciallella2023arbitrary}, we employed the following upwind flux:
\begin{align}\label{eq:upwindflux1d}
\widehat{\mathscr F}_{i+\frac12} = \splitsymbol^+ \mathscr F^L_{i+\frac12} + \splitsymbol^- \mathscr F^R_{i+\frac12} \quad \text{where}\quad \splitsymbol^\pm := L^{-1} \frac{\id \pm \mathrm{sign\,}\Lambda}{2} L,
\end{align}
where $L$ is the matrix of left eigenvectors of the flux Jacobian $\del_q f$, and 
$\mathrm{sign\,} \Lambda$ is the diagonal matrix of the sign of the eigenvalues of the flux Jacobian, evaluated using any (average) state at the interface\footnote{In principle, therefore, one should write $\splitsymbol^\pm_{i+\frac12}$ to make clear that they differ from interface to interface; we do not make this depends explicit to ensure readability}.

For the development of the global flux method, a consistent approximation of $R:=\int^x s \, \dd x$ is necessary to define $\mathscr F = f-R$.
This integral can be computed in a recursive manner, starting from the beginning of the domain, by integrating the source in each element.
To simplify the description of the method, we will assume that $q$ and $s$ are constant in each cell,
therefore the source integral can be computed as
 \begin{align}
 R_i 
 &:=  \underbrace{\int^{x_{i-1}} s \,\dd x}_{R_{i-1}}  + \int^{x_{i-\frac12}}_{x_{i-1}} s \,\dd x + \int^{x_i}_{x_{i-\frac12}} s \,\dd x  =  R_{i-1} + \frac{\Delta x }{2}\bar s_{i-1} + \frac{\Delta x}{2}\bar s_i .
 \end{align}
 Hence, the global flux will now depend on both the conservative flux and the source term
 \begin{align}\label{eq:gf1drecursive}
  \mathscr F_i&= f(\bar q_i) - R_i =  f(\bar q_i) - R_{i-1} - \frac{\Delta x }{2}(\bar s_{i-1} +\bar s_i ).
\end{align}
Similarly, the recursive procedure gives us the following values for $\mathscr F_{i-1}$ and $\mathscr F_{i+1}$:
\begin{align}
 \mathscr F_{i-1} &= f(\bar q_{i-1}) - R_{i-1} , \\
 \mathscr F_{i+1} &= f(\bar q_{i+1}) - R_{i-1} - \Delta x \left( \frac{ 1}{2}\bar s_{i-1} + \bar s_{i} + \frac{1}{2}\bar s_{i+1} \right).
\end{align}
It can be noticed that, when considering a simple 
numerical flux 
\begin{align}
\widehat{\mathscr F}_{i+\frac12}(\mathscr F_i, \mathscr F_{i+1}) &= \mathscr F_i
\end{align}
equation \eqref{eq:gf1dsemidiscr} can be recast as
\begin{align}\label{eq:gf1dresidual}
\frac{\dd}{\dd t} \bar q_i = -\frac{\mathscr F_i - \mathscr F_{i-1}}{\Delta x} = -\frac{f(\bar q_i)-f(\bar q_{i-1})}{\Delta x} + \frac{\bar s_i + \bar s_{i-1}}{2},
\end{align}
which shows already a difference with respect to the classical finite volume method, where the source term would be treated in a centered way.

When the upwind numerical flux \eqref{eq:upwindflux1d} is used, one has (having temporarily made the dependence of $\splitsymbol^\pm$ on the interface explicit)
\begin{align}
\frac{\dd}{\dd t} \bar q_i &= -\frac{\splitsymbol^-_{i+\frac12} f (\bar q_{i+1}) + (\splitsymbol^+_{i+\frac12} + \splitsymbol^-_{i-\frac12})f(\bar q_i)- \splitsymbol^+_{i-\frac12}f(\bar q_{i-1})}{\Delta x} \\
& \nonumber + (\splitsymbol^+_{i+\frac12} + \splitsymbol^-_{i+\frac12} - \splitsymbol^-_{i-\frac12})\frac{\bar s_i + \bar s_{i-1}}{2} +  \splitsymbol^-_{i+\frac12}\frac{\bar s_i + \bar s_{i-1}}{2} + (\underbrace{\splitsymbol^+_{i+\frac12} + \splitsymbol^-_{i+\frac12}}_{= \splitsymbol} - \underbrace{(\splitsymbol^+_{i-\frac12} + \splitsymbol^-_{i-\frac12})}_{= \splitsymbol}) R_{i-1},
\end{align}
where the contribution from $R_{i-1}$ cancels out even if $\splitsymbol^\pm$ depend on the interface, since they nevertheless add up to $\splitsymbol$ on each of them.

\begin{remark}[Compactness of global fluxes]
It can be noticed that, although the global flux in \eqref{eq:gf1drecursive} is defined globally with $R_{i-1}$
that depends on previous values, the time residual \eqref{eq:gf1dresidual}  shows that the stencil is actually compact due to the cancellation of these terms. 
\new{In particular, simple manipulations show that the above upwind method can be also written as
$$
\begin{aligned}
\frac{\dd}{\dd t} \bar q_i = - & \dfrac{\widehat{\mathscr F}_{i+\frac12} - \mathscr F_i}{\Delta x} - \dfrac{\mathscr F_i  - \widehat{\mathscr F}_{i-\frac12}}{\Delta x} \\
= -  & \dfrac{ \splitsymbol^-_{i+1/2}(  \mathscr F_{i+1} - \mathscr F_i)}{\Delta x} 
-  \dfrac{ \splitsymbol^+_{i-1/2}(  \mathscr F_{i} - \mathscr F_{i-1})}{\Delta x}  \\
= -  &  \splitsymbol^-_{i+1/2} \left( \dfrac{ f(\bar q_{i+1}) -f(\bar q_{i})  }{\Delta x}  - \frac{\bar s_{i+1} + \bar s_{i}}{2}\right) 
- \splitsymbol^+_{i-1/2} \left( \dfrac{ f(\bar q_{i}) -f(\bar q_{i-1})  }{\Delta x}  - \frac{\bar s_{i} + \bar s_{i-1}}{2}\right)
\end{aligned}
$$
which is the well known upwind splitting method  dating back to the early works by P.L.\ Roe \cite{Roe87}, and later on Bermudez and Vazquez \cite{bv94}. 
In the multidimensional case, a} full analogy with compact residual distribution methods on a dual cell is presented later in section~\ref{sec:consistency}.
\end{remark}

\section{\new{Stationarity preserving formulation} for multi-dimensional hyperbolic PDEs}\label{sec:GF2d}

\subsection{Numerical method}

When dealing with multi-dimensional conservation laws, non-trivial equilibria arise also in absence of a source term in the equation. For steady states $\del_t q = 0$, 
it is no longer just $\del_x f = 0$ that follows, but instead the divergence $\del_x f + \del_y g = 0$, 
which in general might have many solutions.

\new{To  design stationarity preserving finite volumes schemes, we }
 start by rewriting \eqref{eq:conservationlaw} as \eqref{eq:conservationlawglobalflux} using the definitions in \eqref{eq:globalflux} to obtain
\begin{equation}
  \del_t q + \del_{xy}\mathscr F=  0.
\end{equation}
\new{where, by analogy with the one dimensional case, we  refer to $\mathscr F := F + G$  as a}
global flux\new{, accounting for both contributions of the fluxes in the $x$ and $y$ direction (and later of the source term)}.

Integration of \eqref{eq:mD-GF-form} over the cell $C_{i,j}=\left[x_{i-\frac12},x_{i+\frac12}\right]\times\left[y_{j-\frac12},y_{j+\frac12}\right]$ yields
\begin{align}\label{eq:evolutionGF} 
	\begin{split}
  \Delta x\Delta y\frac{\dd}{\dd t}\bar  q_{i,j} + \mathscr F(t, x_{i+\frac12},y_{j+\frac12}) - \mathscr F(t, x_{i-\frac12},y_{j+\frac12})-\mathscr F(t, x_{i+\frac12},y_{j-\frac12}) +\mathscr F(t, x_{i-\frac12},y_{j-\frac12})= 0,
	\end{split}
\end{align}
where the cell average is defined as
$$ \bar q_{i,j}:=\frac{1}{\Delta x\Delta y}\int_{x_{i-\frac12}}^{x_{i+\frac12}}\int_{y_{j-\frac12}}^{y_{j+\frac12}} q \, \dd x \dd y . $$
\new{From \eqref{eq:evolutionGF}, we see that with this new formulation leads to the introduction of} the numerical \textbf{corner fluxes }$\widehat{\mathscr F}_{i\pm\frac12,j\pm\frac12}$ that then allow to write the evolution equation as
\begin{align}\label{eq:GF2d}
  \frac{\dd}{\dd t} \bar q_{i,j} + \frac{\widehat{\mathscr F}_{i+\frac12,j+\frac12} - \widehat{\mathscr F}_{i-\frac12,j+\frac12} - \widehat{\mathscr F}_{i+\frac12,j-\frac12} + \widehat{\mathscr F}_{i-\frac12,j-\frac12}}{\Delta x \Delta y} = 0.
\end{align}
Recall that the global flux $\mathscr F$ \new{accounts for contributions of the} physical fluxes \new{in all spatial directions, as well as of}  the source, see  equations
\eqref{eq:globalflux}, \eqref{eq:globalsource}, and \eqref{eq:funnyF_def}.
In practice, the \new{transversally integrated fluxes} $F$ and $G$ are computed  \new{via quadrature} along, respectively,
the $y$ and $x$ directions in a 1D fashion. In particular, the value of $F_i$ in the barycenter 
of a given cell $i$ can be computed recursively starting from the beginning of the domain,
similarly to section \ref{sec:GF1d}, as
\begin{align} \label{eq:Frecursion}
	\begin{split}
  F_{i,j} &= \int^{y_{j-1}} f \, \dd y + \int_{y_{j-1}}^{y_{j}} f \, \dd y = F_{i,j-1} + \frac{\Delta y}{2} (f_{i,j-1} + f_{i,j}), \qquad \forall i,
	\end{split}
\end{align}
where, for a first order method, trapezoidal rule is accurate enough.
Similarly, $G_i$ can be computed as
\begin{align}\label{eq:Grecursion}
  G_{i,j} &= \int^{x_{i-1}} g \, \dd x + \int_{x_{i-1}}^{x_{i}} g \, \dd x =G_{i-1,j} + \frac{\Delta x}{2} (g_{i-1,j} + g_{i,j}), \qquad \forall j.
\end{align}
When dealing with hyperbolic PDEs with source terms, as for the shallow water equations in section \ref{sec:SW}, 
the integral of the source term $R:=\int^x\int^y s \, \dd x\dd y$ can also be embedded into the global flux. 
Similarly, $R$ can be recursively defined as 
\begin{equation}\label{eq:integral source}
\new{	R_{i,j} = R_{i-1,j} + R_{i,j-1} - R_{i-1,j-1} + \frac{\Delta x \Delta y}{4} \left(s_{i-1,j-1} + s_{i-1,j}+ s_{i,j-1}+s_{i,j}\right),}
\end{equation}
for every $i,j$.
The treatment of source terms can be implemented through the compact version 
of this formula, which involves only local source values, that does not use recursion, and can be found in section \ref{sec:Compactness}. 
More details about the treatment of source terms for the shallow water equations \ref{sec:SW}, 
are given in section \ref{sec:sourceSW}.

\subsection{Numerical corner fluxes}\label{sec:supg}

\begin{figure}
\centering
\subfigure[Main grid]{
  \begin{adjustbox}{width=0.3\textwidth} 
    \begin{tikzpicture}
        \draw[dashed,gray] (0,0) grid [step=2] (6,6);
        \draw[ultra thick] (2,2) grid [step=2] (6,6);
        \draw[red,fill=red] (4,4) circle[radius=4pt];

        \node at (1,5) {\( C_{i-1,j+1} \)};
        \node at (3,5) {\( C_{i,j+1} \)};
        \node at (5,5) {\( C_{i+1,j+1} \)};
        
        \node at (1,3) {\( C_{i-1,j} \)};
        \node at (3,3) {\( C_{i,j} \)};
        \node at (5,3) {\( C_{i+1,j} \)};
        
        \node at (1,1) {\( C_{i-1,j-1} \)};
        \node at (3,1) {\( C_{i,j-1} \)};
        \node at (5,1) {\( C_{i+1,j-1} \)};      
    \end{tikzpicture}
  \end{adjustbox}}\qquad\qquad
  \subfigure[Dual grid]{
    \begin{adjustbox}{width=0.3\textwidth}
      \begin{tikzpicture}
          \draw[dashed,gray] (0,0) grid [step=2] (6,6);
          \draw[ultra thick] (2,2) grid [step=2] (6,6);
          \draw[red,fill=red] (4,4) circle[radius=4pt];
          \draw[ultra thick,green] (3,3) -- (3,5) -- (5,5) -- (5,3) -- (3,3);
          \draw[green,fill=green] (3,3) circle[radius=4pt];
          \draw[green,fill=green] (3,5) circle[radius=4pt];
          \draw[green,fill=green] (5,3) circle[radius=4pt];
          \draw[green,fill=green] (5,5) circle[radius=4pt];
          
          \node at (3,5.5) {\( C_{i,j+1} \)};
          \node at (5,5.5) {\( C_{i+1,j+1} \)};
          
          \node at (3,2.5) {\( C_{i,j} \)};
          \node at (5,2.5) {\( C_{i+1,j} \)};
          
      \end{tikzpicture}
    \end{adjustbox}}
\caption{Cell labeling for the 2D grid.}\label{fig:celllabeling}
\end{figure}
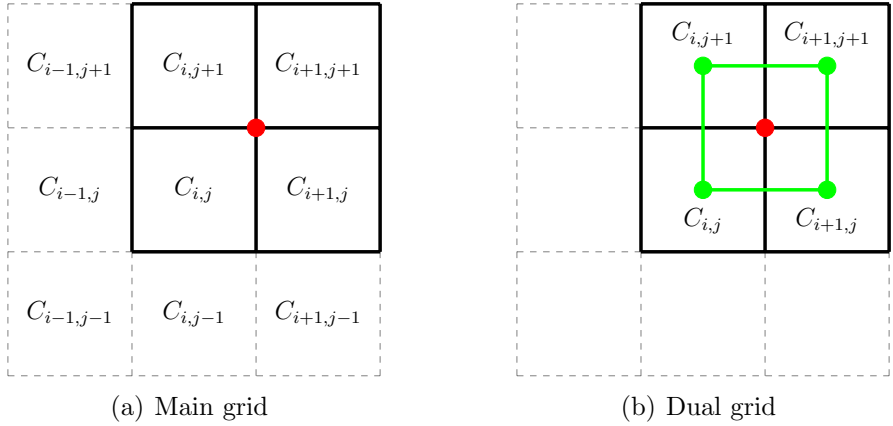

The conservative formulation obtained using global fluxes \eqref{eq:GF2d} requires the definition of numerical corner fluxes
to update cell averages.  This definition is better achieved by considering  the evolution of all the cells neighboring a given node.
To this end it is preferable to recast \eqref{eq:GF2d} as 
\begin{align}\label{eq:GF2d1}
    \frac{\dd}{\dd t} \bar q_{i,j} + \frac{\widehat{\mathscr F}^{(i,j)}_{i+\frac12,j+\frac12} + \widehat{\mathscr F}^{(i,j)}_{i-\frac12,j+\frac12}+ \widehat{\mathscr F}^{(i,j)}_{i+\frac12,j-\frac12} + \widehat{\mathscr F}^{(i,j)}_{i-\frac12,j-\frac12}}{\Delta x \Delta y} = 0.
\end{align}
where we have added the superscript $^{(i,j)}$ to account for the fact that the signs used in the formula involve the  flux balance for $\bar q_{i,j}$. They thus are interpreted as depending on the orientation of the corner normal for the four cells $(i+\ell,j+r)$ for $\ell,r\in\lbrace 0,1 \rbrace$ with respect to the corner $(i+\frac12,j+\frac12)$, defined by
\begin{align}\label{eq:cell_corner_normal}
    &\mathbf{n}^{(i+\ell,j+r)}_{i+\frac12,j+\frac12}:= \begin{pmatrix}
		(-1)^{\ell+1}\\
		(-1)^{r+1}
	\end{pmatrix},\,\text{i.e.,}\\
		\mathbf{n}^{(i,j)}_{i+\frac12,j+\frac12}:= \begin{pmatrix}
		-1\\
		-1
	\end{pmatrix},\,&
	\mathbf{n}^{(i+1,j)}_{i+\frac12,j+\frac12}:= \begin{pmatrix}
	1\\
	-1
	\end{pmatrix},\,
		\mathbf{n}^{(i,j+1)}_{i+\frac12,j+\frac12}:= \begin{pmatrix}
		-1\\
		1
	\end{pmatrix},\,
		\mathbf{n}^{(i+1,j+1)}_{i+\frac12,j+\frac12}:= \begin{pmatrix}
		1\\
		1
	\end{pmatrix}.
\end{align}
Thus $\mathbf{n}^c_p$ is a normal at corner $p$ pointing into cell $c$. Corner normals and corner fluxes alongside a modified concept of conservation associated to corners rather than edges are used e.g. in \cite{Barsukow_nodeconservative2025} for general polygonal grids, where they are shown to be crucial for structure preservation. There, corner normals are defined as the average of the two edge-normals adjacent to the node, scaled with the respective edge lengths. We believe that the definition above is sufficient in the context of Cartesian meshes.

We further define the scalar $n^{(i+\ell,j+r)}_{i+\frac12,j+\frac12 }$ as
\begin{equation}\label{eq:nij}
n^{(i+\ell,j+r)}_{i+ \frac12,j+ \frac12 } = n(\mathbf{n}^{(i+\ell,j+r)}_{i+\frac12,j+\frac12})=  (-1)^{\ell+1}(-1)^{r+1} = (-1)^{\ell+r} ,
\end{equation}
where $n(\mathbf{n}):= \mathbf{n}_x \mathbf{n}_y$ is the product of the two components. 

The reinterpreted definition of the corner fluxes above requires  a different setting compared to the classical one, and appears in the context of genuinely multi-dimensional Riemann solvers using more than two states as input (see e.g.\ \cite{balsarahlle2d,BalsaraMultiDRS,gaburro2025multidRS,amr25} and references therein). 

Next, we aim at defining the notion of the numerical global flux in the multi-dimensional context as generally as possible. Then, we elucidate the conditions imposed on the functional form of the numerical flux by consistency, conservation, and preservation of steady states.
We define the numerical corner fluxes at the corner $(i+\frac12,j+\frac12)$ with respect to the four cells $(i+\ell, j+r)$ with $\ell,r\in\lbrace 0,1\rbrace$ as
$$
  \widehat{\mathscr F}_{i+\frac12,j+\frac12}^{(i+\ell,j+r)}  =  \widehat{\mathscr F}(\mathscr F_{i,j},\mathscr F_{i,j+1},\mathscr F_{i+1,j},\mathscr F_{i+1,j+1};\bar q_{i,j},\bar q_{i,j+1},\bar q_{i+1,j},\bar q_{i+1,j+1}|\mathbf{n}^{(i+\ell,j+r)}_{i+\frac12,j+\frac12}). 
$$
We can now  formulate  local consistency as
\begin{equation}\label{eq:consistency-cornerF}
 \widehat{\mathscr F} (\mathscr F,\mathscr F,\mathscr F,\mathscr F; q, q,q,q| \mathbf{n})   = \mathscr F\, n(\mathbf{n}) .
\end{equation}
A stronger property is the steady state preservation requirement which can be expressed as 
\begin{equation}\label{eq:ssp-cornerF}
 \widehat{\mathscr F}
 (\mathscr F,\mathscr F,\mathscr F,\mathscr F;\bar q_{i,j},\bar q_{i,j+1},\bar q_{i+1,j},\bar q_{i+1,j+1}| \mathbf n^{(i,j)}_{i+\frac12,j+\frac12})   = \mathscr F\, n^{(i,j)}_{i+\frac12,j+\frac12}.
\end{equation}

\begin{remark}[Steady state subspace]\label{rem:steady_state_subspace}
Following \cite{barsukow2025structure}, it will be shown below that  
steady state preservation may be actually already proven if
\begin{equation}\label{eq:steady_state_global_flux}
	\mathscr F^*_{i,j} = \mathscr F + \alpha_i + \beta_j
\end{equation}
for any two data distributions $\alpha$ and $\beta$, such that $\alpha$ only
depends on $i$ and $\beta$ only on $j$. 
In this case, a conservation property more general of \eqref{eq:ssp-cornerF} reads
\begin{equation}\label{eq:ssp-cornerF_up_to_ij}
	\widehat{\mathscr F}_{i+\frac12,j+\frac12}^{(i,j)}(\mathscr F^*_{i,j},\mathscr F^*_{i,j+1},\mathscr F^*_{i+1,j},\mathscr F^*_{i+1,j+1};\bar q_{i,j},\bar q_{i,j+1},\bar q_{i+1,j},\bar q_{i+1,j+1}| \mathbf n^{(i,j)}_{i+\frac12,j+\frac12})   = \mathscr F\, n^{(i,j)}_{i+\frac12,j+\frac12}.
\end{equation}
This condition will be used also in the consistency analysis of Section~\ref{sec:consistency}.
\end{remark}

Conservation cannot be expressed by face in this framework, as in standard finite volume methods.
It is instead formulated at corners as follows: 
\begin{align}\label{eq:conservation}
	\widehat{\mathscr F}^{(i,j)}_{i+\frac12,j+\frac12} + \widehat{\mathscr F}^{(i+1,j)}_{i+\frac12,j+\frac12} + \widehat{\mathscr F}^{(i,j+1)}_{i+\frac12,j+\frac12}  + \widehat{\mathscr F}^{(i+1,j+1)}_{i+\frac12,j+\frac12} &= 0.
\end{align}
 
Having established the necessary conditions that the numerical global fluxes have to satisfy, we next propose a particular choice. As in \cite{gaburro2025multidRS}, we define numerical global fluxes as the sum of a consistent 
central flux plus a diffusion term $\mathcal D$.
In the present paper, corner fluxes are obtained   extending to quadrilaterals the multi-dimensional Osher-Solomon  flux proposed in  \cite{gaburro2025multidRS}, 
and combining it  with the recent work \cite{barsukow2025structure} on  \new{stationarity preserving, global flux quadrature} SUPG stabilization. 
We define the numerical corner flux around the corner $(x_{i+\frac12},y_{j+\frac12})$, which 
 involves the cells $C_{i,j}$, $C_{i+1,j}$, $C_{i,j+1}$ and $C_{i+1,j+1}$ (see figure \ref{fig:celllabeling}). 
The same principle is applied to the other corners. We set
\begin{align}\label{eq:diff}
\begin{split}
  \widehat{\mathscr F}^{(i,j+1)  }_{i+\frac12,j+\frac12} &= \overline{\mathscr F}_{i+\frac12,j+\frac12}n^{(i,j+1)}_{i+\frac12,j+\frac12}  +  \mathcal D^{(i,j+1)  }_{i+\frac12,j+\frac12},
  \quad\quad 
  \widehat{\mathscr F}^{(i+1,j+1)}_{i+\frac12,j+\frac12} = \overline{\mathscr F}_{i+\frac12,j+\frac12}n^{(i+1,j+1)}_{i+\frac12,j+\frac12}  +  \mathcal D^{(i+1,j+1)}_{i+\frac12,j+\frac12}, \\
  \widehat{\mathscr F}^{(i,j)    }_{i+\frac12,j+\frac12} &= \overline{\mathscr F}_{i+\frac12,j+\frac12}n^{(i,j)}_{i+\frac12,j+\frac12}  +  \mathcal D^{(i,j)    }_{i+\frac12,j+\frac12},
  \quad\quad 
  \widehat{\mathscr F}^{(i+1,j)  }_{i+\frac12,j+\frac12} = \overline{\mathscr F}_{i+\frac12,j+\frac12}  n^{(i+1,j)}_{i+\frac12,j+\frac12}+  \mathcal D^{(i+1,j)  }_{i+\frac12,j+\frac12},
\end{split}
\end{align}
where 
$$\overline{\mathscr F}_{i+\frac12,j+\frac12} = \frac{1}{4}( \mathscr F_{i,j} + \mathscr F_{i+1,j} + \mathscr F_{i+1,j+1} + \mathscr F_{i,j+1} )$$ 
is the average of the global fluxes at the corner.

To define the numerical dissipation we consider corner dual cells, as  depicted on
the right on figure \ref{fig:celllabeling}. The conservation condition \eqref{eq:conservation} requires that
$$
 \mathcal D^{(i,j)}_{i+\frac12,j+\frac12} + \mathcal D^{(i+1,j)}_{i+\frac12,j+\frac12} + \mathcal D^{(i,j+1)}_{i+\frac12,j+\frac12}  +  \mathcal D^{(i+1,j+1)}_{i+\frac12,j+\frac12} = 0.
$$
To define the corner dissipation, we cannot proceed as in \cite{gaburro2025multidRS}, since this would
break the stationarity preserving property. Instead,  we take inspiration from the streamline upwind stabilization (SUPG), studied in the global flux context in \cite{barsukow2025structure}. To this end, on the dual cell $\widetilde C_{i+\frac12,j+\frac12}$
we compute
  SUPG stabilizing terms 
\begin{align}
	\begin{split}
   \mathcal{D}^{(i+\ell,j+r)}_{i+\frac12,j+\frac12}:=&\mathcal{D} (\widetilde{\mathscr F}_{i+\frac12,j+\frac12},\bar{q}_{i+\frac12,j+\frac12}|\mathbf{n}^{(i+\ell,j+r)}_{i+\frac12,j+\frac12})\\
   =& \alpha \Delta \int_{\widetilde C_{i+\frac12,j+\frac12}} \left(\frac{1}{\Delta x} J^x \del_\xi \phi_{\ell,r} + \frac{1}{\Delta y} J^y \del_\eta \phi_{\ell,r}\right) \del_{\xi\eta} \widetilde{\mathscr F}_{i+\frac12,j+\frac12} \, \dd \xi \dd \eta,
	\end{split}
\end{align}
where $\widetilde{\mathscr  F}_{i+\frac12,j+\frac12}$ is a bi-linear $\mathbb Q^1$ reconstruction of the global flux on the dual cell from the four adjacent values 
$\mathscr F_{i,j}$, $\mathscr F_{i+1,j}$, $\mathscr F_{i,j+1}$, $\mathscr F_{i+1,j+1}$.
Here, $J^x$ and $J^y$ are the Jacobians of the fluxes $f$ and $g$ computed in the average value $\bar{q}_{i+\frac12,j+\frac12}= \frac{q_{i,j}+q_{i+1,j}+q_{i,j+1}+q_{i+1,j+1}}{4}$. $\Delta=\frac{\sqrt{\Delta x^2+\Delta y^2}}{\sqrt{2}}$ is the characteristic mesh size,
and $\alpha=1/ \lambda_m$ with $\lambda_m$ the maximal spectral radius of the flux Jacobians computed with the average 
state of the four reconstructed values at the corner. The $\varphi_{\ell,r}$ for $\ell,r \in \lbrace 0,1 \rbrace$ in the above definition are the standard bi-linear finite element
basis functions on the quadrilateral $\widetilde C$ defined by 
\begin{equation}
	\varphi_{\ell,r}(\xi,\eta)=\frac14 (1+(-1)^{\ell+1}\xi) (1+(-1)^{r+1}\eta),
\end{equation} i.e.,
\begin{align}
	\begin{split}
	\phi_{0,0}(\xi,\eta)   = \frac14(1-\xi)(1-\eta), \quad &\phi_{1,0}(\xi,\eta) = \frac14(1+\xi)(1-\eta)  \\
	\phi_{0,1}(\xi,\eta) = \frac14(1-\xi)(1+\eta), \quad &\phi_{1,1}(\xi,\eta) = \frac14(1+\xi)(1+\eta)  
	\end{split}
\end{align}
on the reference element $\xi,\eta \in [-1,1]$. With this,
we can explicitly evaluate the streamline upwind dissipation terms as 
\begin{equation}\label{eq:SUdiff_general}
	\mathcal{D}(\widetilde{\mathscr{F}},\bar{q}|\mathbf{n})= \frac{\alpha \Delta}{4} \left( \frac{\mathbf{n}_x}{\Delta x}J^x + \frac{\mathbf{n}_y}{\Delta y} J^y \right) \Phi (\widetilde{\mathscr{F}}),
\end{equation}
i.e.,
\begin{align}\label{eq:SUdiff}
	\begin{split}
\mathcal D^{(i,j+1)  }_{i+\frac12,j+\frac12} &= \frac{\alpha\Delta}{4} \left(- \frac{ J^x}{\Delta x} +  \frac{ J^y}{\Delta y}\right) \Phi_{i+\frac12,j+\frac12}, \qquad\quad \mathcal D^{(i+1,j+1)}_{i+\frac12,j+\frac12} = \frac{\alpha\Delta}{4} \left(+ \frac{ J^x}{\Delta x} +  \frac{ J^y}{\Delta y}\right) \Phi_{i+\frac12,j+\frac12}, \\
\mathcal D^{(i,j)    }_{i+\frac12,j+\frac12} &= \frac{\alpha\Delta}{4} \left(- \frac{ J^x}{\Delta x} -  \frac{ J^y}{\Delta y}\right) \Phi_{i+\frac12,j+\frac12}, \qquad\quad \mathcal D^{(i+1,j)  }_{i+\frac12,j+\frac12} = \frac{\alpha\Delta}{4} \left(+ \frac{ J^x}{\Delta x} -  \frac{ J^y}{\Delta y}\right) \Phi_{i+\frac12,j+\frac12},
\end{split}
\end{align}
\new{and where  $\Phi_{i+\frac12,j+\frac12}$ is the dual cell residual}
\begin{equation}\label{eq:rd0}
\Phi_{i+\frac12,j+\frac12} :=\Phi(\widetilde{\mathscr{F}}_{i+\frac12,j+\frac12}):= \mathscr F_{i+1,j+1}-\mathscr F_{i,j+1}-\mathscr F_{i+1,j}+\mathscr F_{i,j}= \int_{\widetilde C_{i+\frac12,j+\frac12}}\partial_{xy} \widetilde{\mathscr F}_{i+\frac12,j+\frac12}\dd x \dd y\;.
\end{equation}

The next sections are devoted to the analysis of some properties of the scheme obtained with the above definitions, as well as some enhancements.

\subsection{Compactness of the method}\label{sec:Compactness}

 Taking into account only the central flux, without the diffusion, one obtains
 \begin{subequations}
 \begin{align}
 \frac{\dd}{\dd t} \bar q_{i,j} &= - \frac{\widehat{\mathscr F}^{(i,j)}_{i+\frac12,j+\frac12} + \widehat{\mathscr F}^{(i,j)}_{i-\frac12,j+\frac12}+ \widehat{\mathscr F}^{(i,j)}_{i+\frac12,j-\frac12} + \widehat{\mathscr F}^{(i,j)}_{i-\frac12,j-\frac12}}{\Delta x \Delta y} \\
 &= - \frac{\overline{\mathscr F}_{i+\frac12,j+\frac12}n^{(i,j)}_{i+\frac12,j+\frac12} + 
 \overline{\mathscr F}_{i-\frac12,j+\frac12}  n^{(i,j)}_{i-\frac12,j+\frac12}  + 
 \overline{\mathscr F}_{i+\frac12,j-\frac12}n^{(i,j)}_{i+\frac12,j-\frac12}  + 
 \overline{\mathscr F}_{i-\frac12,j-\frac12}n^{(i,j)}_{i-\frac12,j-\frac12}
 }{\Delta x \Delta y}\\
 &= - \frac{\overline{\mathscr F}_{i+\frac12,j+\frac12} -
 \overline{\mathscr F}_{i-\frac12,j+\frac12}   - 
 \overline{\mathscr F}_{i+\frac12,j-\frac12} + 
 \overline{\mathscr F}_{i-\frac12,j-\frac12}
 }{\Delta x \Delta y}.
 \end{align}
 \end{subequations}
Define 
\RII{\begin{align}
 \overline{F}_{i+\frac12,j+\frac12} &= \frac14 (F_{i,j} + F_{i+1,j} + F_{i,j+1} + F_{i+1,j+1}) \\
 \overline{G}_{i+\frac12,j+\frac12} &= \frac14 (G_{i,j} + G_{i+1,j} + G_{i,j+1} + G_{i+1,j+1}) \\
\overline{R}_{i+\frac12,j+\frac12} &= \frac14 (R_{i,j} + R_{i+1,j} + R_{i,j+1} + R_{i+1,j+1})
\end{align}}%
such that \RII{$\overline{\mathscr F}_{i+\frac12,j+\frac12} = \overline{F}_{i+\frac12,j+\frac12} + \overline{G}_{i+\frac12,j+\frac12} + \overline{R}_{i+\frac12,j+\frac12}$}, and use the recursions \eqref{eq:Frecursion}--\eqref{eq:Grecursion} to obtain
\begin{align}
 \overline{F}_{i+\frac12,j+\frac12} - \overline{F}_{i+\frac12,j-\frac12} = \frac{\Delta y}{8} (f_{i+1,j+1} + 2 f_{i+1,j} + f_{i+1,j-1}) +  \frac{\Delta y}{8} (f_{i,j+1} + 2 f_{i,j} + f_{i,j-1})  .
\end{align}
We find an analogous formula for $\overline{F}_{i-\frac12,j+\frac12} - \overline{F}_{i-\frac12,j-\frac12}$. 
\RII{
The element source contribution is given from recursion \eqref{eq:integral source} and it reads:
\begin{equation}\label{eq:Rrecursive}
  \begin{split}
&\overline{R}_{i+\frac12,j+\frac12} - \overline{R}_{i-\frac12,j+\frac12} - \overline{R}_{i+\frac12,j-\frac12} + \overline{R}_{i-\frac12,j-\frac12} = \\
&\frac{\Delta x \Delta y}{16} \left(s_{i+1,j+1} +2 s_{i+1,j} +  s_{i+1,j-1}  + 2 s_{i,j+1}+ 4 s_{i,j} + 2 s_{i,j-1} + s_{i-1,j+1}+ 2 s_{i-1,j}  + s_{i-1,j-1}\right)
,
  \end{split}
\end{equation}%
such that in the end
\begin{align}
\frac{\dd}{\dd t} \bar q_{i,j} &=- \frac{1}{\Delta x}\langle  [\![ f_{\cdot,\cdot} ]\!]_i  \rangle_j - \frac{1}{\Delta y} [\![ \langle   g_{\cdot,\cdot}   \rangle_i ]\!]_j + \left \langle \left \langle  s_{\cdot,\cdot}\right \rangle_i \right \rangle_j 
 \end{align}}%
 with the average and jump operators on the cells defined by
 \begin{align}
 \langle a_{i,\cdot} \rangle_j &:= \frac14 (a_{i,j+1} + 2 a_{i,j} + a_{i,j-1}), \label{eq:averagebracket}\\
 [\![ a_{i,\cdot} ]\!]_j &:= \frac12 (a_{i,j+1} - a_{i,j-1}). \label{eq:jumpbracket}
 \end{align}
 One observes that all the global fluxes drop out and the central part of the method is local. 

Next, we turn to the numerical stabilization.
 Defining
 \RII{\begin{align}
  \Phi^F_{i+\frac12,j+\frac12} &= F_{i+1,j+1}- F_{i,j+1}- F_{i+1,j}+ F_{i,j}, \\
  \Phi^G_{i+\frac12,j+\frac12} &= G_{i+1,j+1}- G_{i,j+1}- G_{i+1,j}+ G_{i,j},\\
  \Phi^R_{i+\frac12,j+\frac12} &= R_{i+1,j+1}- R_{i,j+1}- R_{i+1,j}+ R_{i,j}
 \end{align}
 such that $\Phi_{i+\frac12,j+\frac12} = \Phi^F_{i+\frac12,j+\frac12} + \Phi^G_{i+\frac12,j+\frac12}+\Phi^R_{i+\frac12,j+\frac12}$}, and using again the recursions \eqref{eq:Frecursion}--\eqref{eq:Grecursion}, one obtains for every $i,j$
 \begin{align}
  \Phi^F_{i+\frac12,j+\frac12} &= \frac{\Delta y}{2} (f_{i+1,j+1} + f_{i+1,j} - f_{i,j+1} - f_{i,j}).
 \end{align}
Analogously,
 \begin{align}
  \Phi^G_{i+\frac12,j+\frac12} &= \frac{\Delta x}{2} (g_{i+1,j+1} + g_{i,j+1} - g_{i+1,j} - g_{i,j}).
 \end{align}
 and
\RII{\begin{align}
  \Phi^R_{i+\frac12,j+\frac12} &= \frac{\Delta x\Delta y}{4}  \left(s_{i+1,j} + s_{i+1,j+1} + s_{i,j} + s_{i,j+1}\right).
 \end{align}}
 
 One observes again that all the global fluxes drop out \RII{and this shows that the method is completely compact/local with a stencil $3\times 3$.}
 
 The Jacobians involved in the update of $q_{i,j}$ are evaluated at the four corners, such that the method with only the numerical stabilization reads
  \begin{align}
 \begin{split}
 \frac{\dd}{\dd t} \bar q_{i,j} &= - \frac{\mathcal D^{(i,j)}_{i+\frac12,j+\frac12} + \mathcal D^{(i,j)}_{i-\frac12,j+\frac12}+ \mathcal D^{(i,j)}_{i+\frac12,j-\frac12} + \mathcal D^{(i,j)}_{i-\frac12,j-\frac12}}{\Delta x \Delta y} \\
 &= - \frac{1}{\Delta x \Delta y} \frac{\alpha\Delta}{4} \left[  \left(- \frac{ J^x_{i+\frac12,j+\frac12}}{\Delta x} -  \frac{ J^y_{i+\frac12,j+\frac12}}{\Delta y}\right) \Phi_{i+\frac12,j+\frac12} +    \left( \frac{ J^x_{i-\frac12,j+\frac12}}{\Delta x} -  \frac{ J^y_{i-\frac12,j+\frac12}}{\Delta y}\right) \Phi_{i-\frac12,j+\frac12} \right. \\ &  \left.
 + \left(- \frac{ J^x_{i+\frac12,j-\frac12}}{\Delta x} +  \frac{ J^y_{i+\frac12,j-\frac12}}{\Delta y}\right) \Phi_{i+\frac12,j-\frac12}
 +\left( \frac{ J^x_{i-\frac12,j-\frac12}}{\Delta x} +  \frac{ J^y_{i-\frac12,j-\frac12}}{\Delta y}\right) \Phi_{i-\frac12,j-\frac12}
 \right ].
 \end{split}
 \end{align}
 
\RII{We can conclude that the scheme has a local character, as the central part is local after the combination of the four corner fluxes, while the diffusion part is local at each corner residual. This leads to a scheme with a compact $3\times 3$ stencil.}
 
 \subsubsection{Explicit characterization of the method in the linear case}
 To give the spirit of the method, assume for the moment that the Jacobians are evaluated on same state (or that we deal with a linear problem). Then, the numerical stabilization becomes
\RII{\begin{align}
  \begin{split}
\frac{\dd}{\dd t} \bar q_{i,j} &=  - \frac{1}{\Delta x \Delta y} \frac{\alpha\Delta}{4} \left[   
\frac{ J^x}{\Delta x}\left( - \Phi_{i+\frac12,j+\frac12}+ \Phi_{i-\frac12,j+\frac12}-\Phi_{i+\frac12,j-\frac12}+\Phi_{i-\frac12,j-\frac12}\right )
\right . \\ &\qquad \qquad + \left.\frac{ J^y}{\Delta y}\left( -\Phi_{i+\frac12,j+\frac12}   -\Phi_{i-\frac12,j+\frac12} + \Phi_{i+\frac12,j-\frac12}+\Phi_{i-\frac12,j-\frac12} \right ) \right] 
  \end{split}\\
\begin{split}
  &=  \frac{\alpha\Delta}{4} \left[   
\frac{ J^x}{\Delta x^2} \langle f_{i+1,\cdot} - 2 f_{i,\cdot} + f_{i-1, \cdot} \rangle_j
+\frac{ J^x}{\Delta x \Delta y} [\![ \, [\![ g_{\cdot,\cdot} ]\!]_i\,]\!]_j + \frac{J^x}{\Delta x} \langle [\![ s_{\cdot,\cdot} ]\!]_i \rangle_j  \label{eq:diffusionmultidnonlinear}
\right . \\ &\qquad  +  \left.
\frac{ J^y}{\Delta x \Delta y} [\![ \, [\![ f_{\cdot,\cdot} ]\!]_i\,]\!]_j
+\frac{ J^y}{\Delta y^2} \langle g_{\cdot,j+1} - 2 g_{\cdot,j} + g_{\cdot,j-1} \rangle_i+ \frac{J^y}{\Delta y}  [\![ \langle s_{\cdot,\cdot}  \rangle_i ]\!]_j
\right ].
\end{split}
\end{align}
 Here, the average $\langle \cdot \rangle$ and jump $ [\![ \cdot ]\!]$ operators introduced in \eqref{eq:averagebracket} and \eqref{eq:jumpbracket} have been used again.}

Finally, the method can be expressed in classical flux form
\begin{align}
 \frac{\dd}{\dd t} q_{i,j} + \frac{\hat f_{i+\frac12,j} - \hat f_{i-\frac12,j}}{\Delta x} + \frac{\hat g_{i,j+\frac12} - \hat g_{i,j-\frac12}}{\Delta y} = \RII{ \left \langle \left \langle  s_{\cdot,\cdot}\right \rangle_i \right \rangle_j }.
\end{align}
This demonstrates that additionally to the notion \eqref{eq:conservation}, the method is also conservative in the classical sense. The numerical flux through the edge $(i+\frac12,j)$ reads
\begin{align}
 \hat f_{i+\frac12,j} = \frac12 \langle f_{i+1,\cdot} + f_{i,\cdot} \rangle_j - \frac{\alpha \Delta }{2}  \frac{J^x_{i+\frac12,j+\frac12} \Phi_{i+\frac12,j+\frac12} + J^x_{i+\frac12,j-\frac12} \Phi_{i+\frac12,j-\frac12}}{2\Delta x \Delta y}.
\end{align}
In a quasi-1D situation, i.e. when nothing depends on $j$ and when $g=0$, the flux is
\RII{\begin{align}
 \hat f_{i+\frac12} = \frac12( f_{i+1} + f_{i} ) - \frac{\alpha }{2} J^x_{i+\frac12} \left(f_{i+1}  - f_{i} -\frac{\Delta x}{2}(s_{i+1}+s_i)\right).
\end{align}}%
With this, next we discuss the interplay between the numerical stabilization and stationarity preservation.

\subsection{Analysis of the method for the linear acoustic system}\label{sec:linearanalysis}

In this section, we focus on the analysis of the new numerical method by analyzing the numerical diffusion that allows to achieve stationarity preservation, and obtaining an energy estimate. 

\subsubsection{Numerical diffusion and stationarity preservation}

We start by considering the linear acoustic system, but similar results can be shown for nonlinear problems.
A classical dimensionally split finite volume scheme with a local Lax-Friedrichs numerical flux provides the following
discretization of the linear acoustic system: 
\begin{align}
	\begin{split}
  \frac{\dd}{\dd t} p + D_x u + D_y v  &= \frac{\lambda_m \Delta x}{2} D_{xx} p + \frac{\lambda_m \Delta y}{2} D_{yy} p,\\
  \frac{\dd}{\dd t} u + D_x p                &= \frac{\lambda_m \Delta x}{2} D_{xx} u,\\
  \frac{\dd}{\dd t} v + D_y p                &= \frac{\lambda_m \Delta y}{2} D_{yy} v,
	\end{split}
\end{align}
where $D$ represent the discrete derivative operators given by
\begin{align}
  (D_x q)_{i,j} = \frac{q_{i+1,j}-q_{i-1,j}}{2\Delta x},\qquad
  (D_{xx} q)_{i,j} = \frac{q_{i+1,j}-2q_{i,j}+q_{i-1,j}}{\Delta x^2},
\end{align}
and similarly for $D_y$ and $D_{yy}$. 
As can be noticed, the stencil used in this discretization is a simple 5-points stencil.

Contrary to this, the stencils involved in the new first order global flux method, equipped 
with SUPG corner fluxes as described above, includes the cell itself and its eight neighbors (9-points stencil):
\begin{equation}\label{eq:GFDiffusionAcoustic}
	\begin{split}
  \frac{\dd}{\dd t} p + \bar D_x u + \bar D_y v  &= \frac{\alpha \Delta}{2} ( \bar D_{xx} p + \bar D_{yy} p),\\
  \frac{\dd}{\dd t} u + \bar D_x p          &= \frac{\alpha \Delta}{2} \left(\bar D_{xx} u +  D_{xy} v \right),\\
  \frac{\dd}{\dd t} v + \bar D_y p          &= \frac{\alpha \Delta}{2} \left(D_{xy} u + \bar D_{yy} v \right),
	\end{split}
\end{equation}%
\begin{figure}
	\begin{center}
		\subfigure[$\Delta x \bar D_x$]{
			\begin{tikzpicture}
				\draw[thick] (0,0) grid [step=1] (3,3);
				
				\node at (-0.5,2.5) {\( j+1 \)};
				\node at (-0.5,1.5) {\( j \)};
				\node at (-0.5,0.5) {\( j-1 \)};
				\node at (2.5,-0.5) {\( i+1 \)};
				\node at (1.5,-0.5) {\( i \)};
				\node at (0.5,-0.5) {\( i-1 \)};
				\node at (0.5,2.5) {\( -\frac18 \)};
				\node at (1.5,2.5) {\(  0       \)};
				\node at (2.5,2.5) {\(  \frac18 \)};
				
				\node at (0.5,1.5) {\( -\frac14 \)};
				\node at (1.5,1.5) {\(  0       \)};
				\node at (2.5,1.5) {\(  \frac14 \)};
				
				\node at (0.5,0.5) {\( -\frac18 \)};
				\node at (1.5,0.5) {\(  0       \)};
				\node at (2.5,0.5) {\(  \frac18 \)};
				
			\end{tikzpicture}
		}\hspace{5mm}
		\subfigure[$\Delta x^2\bar D_{xx}$]{
			\begin{tikzpicture}
				\draw[thick] (0,0) grid [step=1] (3,3);
				
				\node at (-0.5,2.5) {\( j+1 \)};
				\node at (-0.5,1.5) {\( j \)};
				\node at (-0.5,0.5) {\( j-1 \)};
				\node at (2.5,-0.5) {\( i+1 \)};
				\node at (1.5,-0.5) {\( i \)};
				\node at (0.5,-0.5) {\( i-1 \)};
				\node at (0.5,2.5) {\(  \frac14 \)};
				\node at (1.5,2.5) {\( -\frac12 \)};
				\node at (2.5,2.5) {\(  \frac14 \)};
				
				\node at (0.5,1.5) {\(  \frac12 \)};
				\node at (1.5,1.5) {\(    -1    \)};
				\node at (2.5,1.5) {\(  \frac12 \)};
				
				\node at (0.5,0.5) {\(  \frac14 \)};
				\node at (1.5,0.5) {\( -\frac12 \)};
				\node at (2.5,0.5) {\(  \frac14 \)};      
			\end{tikzpicture}
		}\hspace{5mm}
		\subfigure[$\Delta x\Delta y D_{xy}$]{
			\begin{tikzpicture}
				\draw[thick] (0,0) grid [step=1] (3,3);
				
				\node at (-0.5,2.5) {\( j+1 \)};
				\node at (-0.5,1.5) {\( j \)};
				\node at (-0.5,0.5) {\( j-1 \)};
				\node at (2.5,-0.5) {\( i+1 \)};
				\node at (1.5,-0.5) {\( i \)};
				\node at (0.5,-0.5) {\( i-1 \)};
				\node at (0.5,2.5) {\( -\frac14 \)};
				\node at (1.5,2.5) {\(  0       \)};
				\node at (2.5,2.5) {\(  \frac14 \)};
				
				\node at (0.5,1.5) {\(  0 \)};
				\node at (1.5,1.5) {\(  0 \)};
				\node at (2.5,1.5) {\(  0 \)};
				
				\node at (0.5,0.5) {\(  \frac14 \)};
				\node at (1.5,0.5) {\(  0       \)};
				\node at (2.5,0.5) {\( -\frac14 \)};
				
			\end{tikzpicture}
		}
	\end{center}
	\caption{Finite difference-like stencils for global flux differential operators.}\label{fig:FDstencils}
\end{figure}
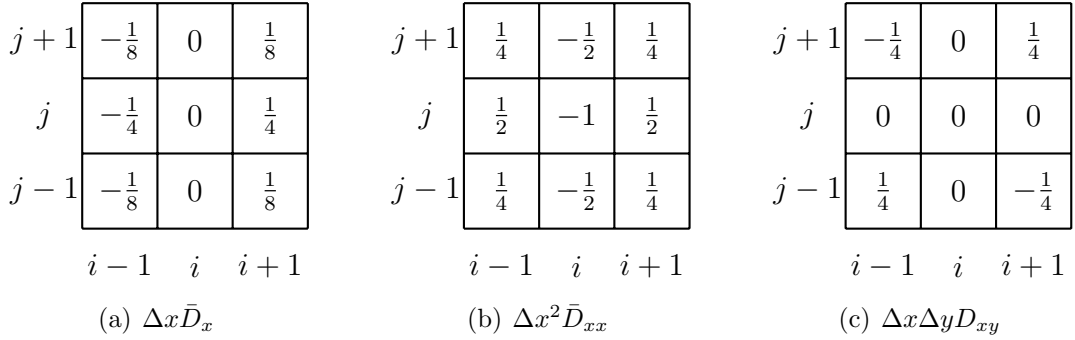%
where the new finite difference operators are given by the stencils in figure~\ref{fig:FDstencils}. $\bar D_x$ and $\bar D_{xx}$ are standard discrete first and second order derivatives in $x$, but including a particular averaging in $y$ direction, introduced in \eqref{eq:averagebracket} as $\langle a_{i,\cdot} \rangle_j := \frac14 (a_{i,j+1} + 2 a_{i,j} + a_{i,j-1})$. These operators have first appeared in \cite{morton2001vorticity} and then in virtually all subsequent works on stationarity and vorticity preservation for linear acoustics on Cartesian grids, e.g. in \cite{morton2001vorticity,sidilkover2002factorizable,jeltsch2006curl,mishra2011constraint,lung14,barsukow2019stationarity}.

In \cite{barsukow2025structure} these finite difference operators appeared naturally as Kronecker products of uni-direction operators: $D_{xy} = D_x \otimes D_yI_y $, $\bar D_{xx} = D_{xx} \otimes D_yI_y$ etc., with matrices $I_y$ responsible for the integration and $D_yI_y$ being the particular averaging matrix corresponding to $\langle \cdot \rangle$. 

One observes that the diffusion operators for the velocity no longer depend on second derivatives of individual components,
but instead on the gradient of the divergence operator.
Although this characteristic of the scheme is more readily visible for a simplified model like the linear acoustic
system \eqref{eq:GFDiffusionAcoustic}, similar considerations can be drawn also for more complex nonlinear systems, as is obvious from \eqref{eq:diffusionmultidnonlinear}.

\begin{remark}[Numerical diffusion for nonlinear problems]
For the shallow water equations, the first order global flux method with SUPG corner fluxes leads to the following discrete evolution equations:
\begin{align}\label{eq:GFdiffusionSW}
  \begin{split}
  \frac{\dd}{\dd t} h + \bar{D}_x f_h + \bar D_y g_h  &= \frac{\alpha \Delta}{2} ( \bar{D}_{xx} f_{hu} + D_{xy} g_{hu} ) + \frac{\alpha \Delta}{2} (D_{xy} f_{hv} + \bar{D}_{yy} g_{hv}) ,\\
  \frac{\dd}{\dd t} hu+ \bar{D}_x f_{hu} + \bar{D}_y g_{hu} &= \frac{\alpha \Delta}{2} (g\bar h -\bar u^2)  (\bar{D}_{xx} f_{h} + D_{xy} g_{h}) - \frac{\alpha \Delta}{2} \bar u \bar v (D_{xy} f_{h} + \bar{D}_{yy} g_h) +\\
  &\hspace{-3cm} +\alpha \Delta \bar u (\bar{D}_{xx} f_{hu} + D_{xy} g_{hu}) + \frac{\alpha \Delta}{2}\bar v (D_{xy} f_{hu} + \bar{D}_{yy} g_{hu}) + \frac{\alpha \Delta}{2} \bar u (D_{xy} f_{hv} + \bar{D}_{yy} g_{hv}) ,\\
  \frac{\dd}{\dd t} hv+ \bar{D}_x f_{hv} + \bar{D}_y g_{hv} &= \frac{\alpha \Delta}{2} (g\bar h -\bar v^2)  (D_{xy} f_{h} + \bar{D}_{yy} g_{h}) - \frac{\alpha \Delta}{2} \bar u \bar v (\bar{D}_{xx} f_{h} + D_{xy} g_h) +\\
  &\hspace{-3cm} +\alpha \Delta \bar v (D_{xy} f_{hv} + \bar{D}_{yy} g_{hv}) + \frac{\alpha \Delta}{2}\bar v (\bar{D}_{xx} f_{hu} + D_{xy} g_{hu}) + \frac{\alpha \Delta}{2} \bar u (\bar{D}_{xx} f_{hv} + D_{xy} g_{hv}),
  \end{split}
\end{align}
where we considered constant Jacobians defined in an average state $\bar{q}=(\bar h, \bar h \bar{u}, \bar h\bar v)$ to regroup the terms in a compact form.
For notational convenience, we have introduced the terms $f_{hu}$ and $g_{hu}$ to denote the fluxes for the momentum equation in $hu$, and similarly for the other equations.  
Again, we obtain diffusion terms that depends on the gradient of the divergence operator, which is essential for stationarity preservation.

\end{remark}

\subsubsection{Semi-discrete energy stability}\label{sec:energy}
In this section, we focus on the semi-discrete energy estimates for the linear acoustic system.
In particular, in the continuous setting, it can be easily proven that the conserved energy of the system is
$\mathcal{E} = \frac{u^2+v^2}{2}+\frac{p^2}{2}$, by multiplying \eqref{eq:LinAc} by $q^T$, 
summing the three equations and integrating over the whole domain $\Omega$:
\begin{align}
  \int_\Omega [q^T\del_t q + q^T \mathbf{J}\nabla q] \dd \vec x &=\int_\Omega [u\del_t u + u\del_x p + v\del_t v + v\del_y p + p\del_t p + p\del_x u + p\del_y v] \dd \vec x \nonumber\\
&= \frac{\dd }{\dd t}\int_\Omega \left[\frac{u^2+v^2}{2}+\frac{p^2}{2}\right]\dd \vec x + \int_{\del\Omega} p\mathbf{v}\cdot\mathbf{n}\dd S
\end{align}
where the second term is zero for periodic boundary conditions.
In our discrete framework, we would like to prove that 
$$ \frac{\dd }{\dd t}\int_\Omega \left[\frac{u^2+v^2}{2}+\frac{p^2}{2}\right]\dd x \leq  0 .$$

To do that, we will write the new differential operators introduced above in a
tensor product form to split the contributions from the two dimensions, for more details see \cite{barsukow2025structure}, 
\begin{align*}
  \bar D_x &= \left(D_{+}M_{-}\right)\otimes\left(M_{+}M_{-}\right), &
  \bar{D}_{xx} &= \left(D_{+}D_{-}\right)\otimes\left(M_{+}M_{-}\right) ,\\
  \bar D_{yy}& = \left(M_{+}M_{-}\right) \otimes\left(D_{+}D_{-}\right),&
  D_{xy} &=  \left(D_{+}M_{-}\right)\otimes\left(D_{+}M_{-}\right)
\end{align*}
where the derivative, $D$, and average, $M$, operators are defined as
\begin{equation}
  D_{+} = \begin{bmatrix} -1 & 1 &0 & \ldots& \ldots \\ 0 &-1 & 1 & \ldots& \ldots \\ & \ddots & \ddots & \ddots \\ \ldots& \ldots & 0 & -1 &1 \\ 1 & \ldots&\ldots & 0 & -1\end{bmatrix},\quad
  M_{+} = \begin{bmatrix} \frac12 & \frac12 &0 & \ldots& \ldots \\ 0 &\frac12 & \frac12 & \ldots& \ldots \\ & \ddots & \ddots  & \ddots \\  \ldots &\ldots & 0 & \frac12 &\frac12 \\ \frac12 & \ldots &\ldots & 0 &\frac12 \end{bmatrix}, 
\end{equation}
with periodic boundary conditions, and by 
$$D_{-} = -D_{+}^T =: D \qquad\text{and}\qquad M_{-} = M_{+}^T =: M.$$

%
\begin{proposition}[Semi-discrete energy inequality]
The following semi-discrete energy inequality holds, 
\begin{equation}  
  {\normalfont\frac{\dd }{\dd t}} \sum_{i,j} \mathcal{E}_{i,j} \leq  0.
\end{equation}
\end{proposition}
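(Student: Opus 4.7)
The plan is to take the discrete $\ell^{2}$ inner product of the three equations in \eqref{eq:GFDiffusionAcoustic} with $u$, $v$ and $p$ respectively, sum over the periodic grid, and show that the convective terms cancel while the diffusive terms contribute non-positively to $\frac{\dd}{\dd t}\sum_{i,j}\mathcal{E}_{i,j}$. The main tool is the tensor-product factorisation of the discrete operators, together with the relations $D_{-}=-D_{+}^{T}$ and $M_{-}=M_{+}^{T}$, and the fact that, under periodic boundary conditions, the one-dimensional stencil matrices are circulant and hence mutually commute. A preliminary check (on the stencils, or equivalently on the Fourier symbols $\ii\sin\xi$ and $\cos^{2}(\xi/2)$) yields the key identity $D_{+}M_{-}=D_{-}M_{+}$, which is the backbone of all cancellations that follow.

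\textbf{Central terms and pressure diffusion.} Using $(M_{+}M_{-})^{T}=M_{+}M_{-}$ together with $(D_{+}M_{-})^{T}=M_{+}(-D_{-})=-D_{-}M_{+}=-D_{+}M_{-}$, I would conclude that $\bar D_{x}$ is skew-symmetric, and analogously for $\bar D_{y}$. Hence the cross contributions $u^{T}\bar D_{x} p + p^{T}\bar D_{x} u$ and $v^{T}\bar D_{y} p + p^{T}\bar D_{y} v$ vanish after summation, so the central part contributes nothing. For the pressure diffusion, $D_{+}D_{-}=-D_{+}D_{+}^{T}\preceq 0$ and $M_{+}M_{-}=M_{+}M_{+}^{T}\succeq 0$, so by the tensor-product structure $\bar D_{xx}$ and $\bar D_{yy}$ are both negative semi-definite and $p^{T}(\bar D_{xx}+\bar D_{yy})p\leq 0$.

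\textbf{Velocity diffusion (main obstacle).} The only delicate point is the velocity part, where the $D_{xy}$ term couples the two components and the sign is not obvious. The plan is to introduce
\begin{equation*}
A := D_{+}\otimes M_{+}, \qquad B := M_{+}\otimes D_{+},
\end{equation*}
and to verify the common factorisation
\begin{equation*}
\bar D_{xx} = -AA^{T}, \qquad \bar D_{yy} = -BB^{T}, \qquad D_{xy} = -AB^{T}.
\end{equation*}
The first two identities follow from $(D_+D_-)\otimes(M_+M_-)=-(D_+D_+^T)\otimes(M_+M_+^T)$ and its analogue; the third, which is the non-trivial step, uses $AB^{T}=(D_{+}M_{-})\otimes(M_{+}D_{-})$ combined with $M_{+}D_{-}=D_{-}M_{+}=D_{+}M_{-}$ and the sign flip $D_{-}=-D_{+}^{T}$. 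Since $D_{xy}$ is symmetric, the three velocity contributions collapse into a perfect square,
\begin{equation*}
u^{T}\bar D_{xx}u + v^{T}\bar D_{yy}v + 2\, u^{T}D_{xy}v \;=\; -\|A^{T}u + B^{T}v\|^{2} \;\leq\; 0 ,
\end{equation*}
and assembling this with the pressure contribution yields the claimed inequality. The hardest part is precisely identifying the common pair $A,B$ that factors all three operators simultaneously; once found, everything is algebra. As a bonus, $A^{T}u+B^{T}v$ is (up to sign) a discrete divergence of $\mathbf{v}$, so equality holds exactly on discretely divergence-free velocities, consistently with the stationarity preserving character of the scheme.
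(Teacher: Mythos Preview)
Your proof is correct and follows essentially the same route as the paper: both arguments exploit the tensor-product structure to factor the discrete operators, use the skew-symmetry of $\bar D_x,\bar D_y$ to kill the central terms, and then assemble the velocity diffusion into a single non-positive perfect square. Your pair $A=D_+\otimes M_+$, $B=M_+\otimes D_+$ is, up to a sign, the paper's $(D\otimes M)$ and $(M\otimes D)$ with $D:=D_-$, $M:=M_-$, so the final quadratic form $-\|A^Tu+B^Tv\|^2$ coincides with the paper's $-\|(D\otimes M)u+(M\otimes D)v\|^2$; your explicit verification of the commutation identity $D_+M_-=D_-M_+$ is a useful addition that the paper leaves implicit in the remark ``$MD=DM$''.
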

\begin{proof}
The central part of the method preserves the energy since, e.g. $u^T \bar D_x p + p^T \bar D_x u = u^T \bar D_x p + u^T \bar D_x^T p = 0$ up to boundary terms. The evolution of the energy of the system is thus entirely given by the numerical stabilization as follows:
\begin{align}
  \frac{2}{\alpha \Delta}\frac{\dd }{\dd t} \sum_{i,j} \mathcal{E}_{i,j} &=  p^T \bar D_{xx} p + p^T \bar D_{yy} p +  u^T \bar D_{xx} u + u^T D_{xy} v + v^T D_{xy} u + v^T \bar D_{yy} v, 
\end{align}
where the terms on the right-hand side can be recast as
\begin{align*}
p^T \bar D_{xx} p &=  p^T \left(D_{+}D_{-}\right)\otimes\left(M_{+}M_{-}\right) p = -\|(D\otimes M)p\|^2\leq 0, \\
p^T \bar D_{yy} p &=  p^T \left(M_{+}M_{-}\right)\otimes\left(D_{+}D_{-}\right) p = -\|(M\otimes D)p\|^2\leq 0, \\   
u^T \bar D_{xx} u &=  u^T \left(D_{+}D_{-}\right)\otimes\left(M_{+}M_{-}\right) u = -\left[(D\otimes M)u\right]^T\left[(D\otimes M)u\right],  \\
u^T D_{xy} v          &=  u^T \left(D_{+}M_{+}\right)\otimes\left(M_{-}D_{-}\right) v = -\left[(D\otimes M)u\right]^T\left[(M\otimes D)v\right],  \\
v^T D_{xy} u          &=  v^T \left(M_{+}D_{+}\right)\otimes\left(D_{-}M_{-}\right) u = -\left[(M\otimes D)v\right]^T\left[(D\otimes M)u\right],  \\
v^T \bar D_{yy} v &=  v^T \left(M_{+}M_{-}\right)\otimes\left(D_{+}D_{-}\right) v = -\left[(M\otimes D)v\right]^T\left[(M\otimes D)v\right],   
\end{align*}
where the mixed operator was manipulated thanks to $MD=DM$.
Hence, the semi-discrete energy is found to decrease:
\begin{align*}
  \frac{2}{\alpha \Delta} \frac{\dd }{\dd t} \sum_{i,j} \mathcal{E}_{i,j}  \leq & - \left[(D\otimes M)u\right]^T\left[(D\otimes M)u+(M\otimes D)v\right]\\&  -\left[(M\otimes D)v\right]^T\left[(D\otimes M)u+(M\otimes D)v\right]\\
  =& -\|(D\otimes M)u+(M\otimes D)v\|^2\leq 0.
\end{align*} 

This is a discrete version of $\displaystyle \int_{\Omega} \vec v \cdot \nabla (\nabla \cdot \vec v) \, \dd \vec x = -  \int_{\Omega} (\nabla \cdot \vec v)^2 \dd \vec x + \text{boundary terms}$.

\end{proof}

\subsection{Analogy with Residual Distribution and discrete steady states}\label{sec:consistency}
The recent work of \cite{gaburro2025multidRS} has provided a general analysis of the relations between multi-dimensional
finite volume methods  with point fluxes and residual distribution schemes.  Earlier, it has been shown in \cite{Abgrall2022}
that residual distribution schemes can be reformulated in terms of a global flux finite volume method.
This section elaborates on these aspects for   the global flux finite volume approach proposed here. This allows us to
give more details on the discrete steady states of the method in a more general setting. 

Following the last reference we start from the conservation condition \eqref{eq:conservation} at each corner. 
Consider, instead of \eqref{eq:diff}, the following ansatz for the numerical global flux, given by
the trace of the cell (global) flux plus a fluctuation:
\begin{equation}\label{eq:rd1}
  \widehat{\mathscr F}^{(i+\ell,j+m)  }_{i+\frac12,j+\frac12} =    \mathscr F_{i,j}  n^{(i+\ell,j+m)}_{i+\frac12,j+\frac12} + \Phi^{(i+\ell,j+m)  }_{i+\frac12,j+\frac12} , \qquad \ell,m \in \lbrace 0,1 \rbrace.
\end{equation}
All the  properties of the numerical flux can be translated into requirements
on the fluctuations $\Phi^{(i+\ell,j+m)  }_{i+\frac12,j+\frac12}$. The  most interesting ones are related to conservation and stationarity preservation.
Corner conservation is written by using the above ansatz in \eqref{eq:conservation}, which leads to the requirement
$$
\Phi^{(i,j)  }_{i+\frac12,j+\frac12}  + \Phi^{(i+1,j)  }_{i+\frac12,j+\frac12} + \Phi^{(i+1,j+1)  }_{i+\frac12,j+\frac12}  + \Phi^{(i,j+1)  }_{i+\frac12,j+\frac12}  
=  - \sum\limits_{\ell, m\in\{0,1\}}
 \mathscr F_{i+\ell,j+m}  n^{(i+\ell,j+m)  }_{i+\frac12,j+\frac12}  = -  \Phi_{i+\frac12,j+\frac12},
$$
where $\Phi_{i+\frac12,j+\frac12}$ is the global flux integral on the corner dual cell as defined in \eqref{eq:rd0}. 
By virtue of \eqref{eq:rd1}, defining a corner flux is  thus equivalent to defining a residual distribution scheme satisfying 
\begin{equation}\label{eq:rd2}
\sum\limits_{\ell, m\in\{0,1\}}\Phi^{(i+\ell,j+m)  }_{i+\frac12,j+\frac12}  =-
\Phi_{i+\frac12,j+\frac12} .
\end{equation}
This analogy goes much further, and it is in fact a full equivalence.
In particular, we can  prove the following facts.
\begin{proposition}[Equivalence with RD]\label{prop:RDeq} Consider the multi-dimensional global flux finite volume method \eqref{eq:GF2d1}, with numerical fluxes
written in terms of fluctuations \eqref{eq:rd0}. Then,
\begin{enumerate}
\item the multidimensional finite volume global flux method \eqref{eq:GF2d1} with piecewise constant data is equivalent to the Residual Distribution scheme
\begin{equation}\label{eq:rd}
\Delta x \Delta y \frac{\dd}{\dd t} \bar q_{i,j} + \Phi^{(i,j)  }_{i+\frac12,j+\frac12} + \Phi^{(i,j)  }_{i+\frac12,j-\frac12} + \Phi^{(i,j)  }_{i-\frac12,j+\frac12}+\Phi^{(i,j)  }_{i-\frac12,j-\frac12} =0\,,
\end{equation}
with  fluctuations  $ \Phi^{(i,j)  }_{i\pm\frac12,j\pm\frac12}$ verifying the conservation  condition   \eqref{eq:rd2} at each corner $(i\pm\frac12,j\pm\frac12)$;
\item the finite volume global flux method \eqref{eq:GF2d1} with   average flux $   \widehat{\mathscr F}^{(i+\ell,j+m) }_{i+\frac12,j+\frac12}   =\overline{\mathscr F}_{i+\frac12,j+\frac12} n^{(i+\ell,j+m)  }_{i+\frac12,j+\frac12},$ $\forall \,\ell,m \in \{0,1\}$ is equivalent to the residual distribution scheme with   fluctuations  
$$
\Phi^{(i+\ell,j+m)  }_{i+\frac12,j+\frac12} =  ( \overline{\mathscr F}_{i+\frac12,j+\frac12}-  \mathscr F_{i+\ell,j+m} )n^{(i+\ell,j+m) }_{i+\frac12,j+\frac12}
$$
\item  the finite volume method including the numerical dissipation in \eqref{eq:diff}, defined by the streamline upwind terms \eqref{eq:SUdiff}, is equivalent to the  
 residual distribution scheme  defined by 
$$
\Phi^{(i+\ell,j+m)  }_{i+\frac12,j+\frac12} =  ( \overline{\mathscr F}_{i+\frac12,j+\frac12}-  \mathscr F_{i+\ell,j+m} )n^{(i+\ell,j+m) }_{i+\frac12,j+\frac12}  -\dfrac{1}{4}  \delta^{(i+\ell,j+m)  }_{i+\frac12,j+\frac12} \Phi_{i+\frac12,j+\frac12}\;,\quad \ell,m\in\{0,1\}
$$
with $\delta^{(i+\ell,j+m)  }_{i+\frac12,j+\frac12}=\alpha\Delta\left( (-1)^\ell \frac{J^x}{\Delta x} + (-1)^m \frac{J^y}{\Delta y} \right )$ according to \eqref{eq:SUdiff};
\item both the centered and the stabilized method are steady state  preserving with respect to global fluxes of the type \eqref{eq:steady_state_global_flux}. 
In particular, they both admit  discrete steady solutions verifying
$$
 \Phi_{i+\frac12,j+\frac12} =0 \quad \forall i,j \;;
$$ 
\item both the centered and the stabilized method are formally second order accurate at steady state for smooth enough solutions.
\end{enumerate}
\begin{proof}
The first fact is a consequence of the identity $n^{(i,j)}_{i+\frac12,j+\frac12}+n^{(i,j)}_{i-\frac12,j+\frac12} +n^{(i,j)}_{i+\frac12,j-\frac12}+n^{(i,j)}_{i-\frac12,j-\frac12}=0$,
so when  replacing  \eqref{eq:rd1} in \eqref{eq:GF2d1} we obtain immediately \eqref{eq:rd}: Using \eqref{eq:rd} or \eqref{eq:GF2d1}  
to represent the scheme is absolutely equivalent. The second and third properties are obtained by simply subtracting from  the average flux, and from  \eqref{eq:diff} 
the cell contributions to obtain the corresponding fluctuations.\\

Concerning the discrete kernel property, consider first
 the average flux without extra dissipation,   and compute explicitly  the  sum of the corner contributions:
\begin{align}
\overline{\mathscr F}_{i+\frac12,j+\frac12} n^{(i,j)  }_{i+\frac12,j+\frac12} &+\overline{\mathscr F}_{i+\frac12,j-\frac12} n^{(i,j)  }_{i+\frac12,j-\frac12}
+\overline{\mathscr F}_{i-\frac12,j+\frac12} n^{(i,j)  }_{i-\frac12,j+\frac12}+\overline{\mathscr F}_{i-\frac12,j-\frac12} n^{(i,j)  }_{i-\frac12,j-\frac12} \nonumber \\
  &= \dfrac{ \mathscr F_{i+1,j+1} - \mathscr F_{i+1,j-1} - \mathscr F_{i-1,j+1} +  \mathscr F_{i-1,j-1}}{4} \\
 &=
 \frac14 \left( \Phi_{i+\frac12,j+\frac12} + \Phi_{i-\frac12,j+\frac12} + \Phi_{i+\frac12,j-\frac12} + \Phi_{i-\frac12,j-\frac12} \right) \label{eq:eqq}.
\end{align}
This shows that  $ \Phi_{i+\frac12,j+\frac12} =0$ is   in the kernel of the average flux method. The same is  true for the stabilized scheme for which we can write 
after assembly  around cell $i,j$  
\begin{equation}\label{eq:eqqq}
\begin{aligned}
\sum_{\ell,m\in\{-1,1\}} \left[  \overline{\mathscr F}_{i+ \frac\ell2,j+ \frac{m}2} n^{(i,j)  }_{i+ \frac\ell2,j+ \frac{m}2}  -\dfrac{1}{4}  \delta^{(i,j)  }_{i+ \frac\ell2,j+\frac{m}2} 
 \Phi_{i+ \frac\ell2,j+\frac{m}2} \right] \\=
\sum_{\ell,m\in\{-1,1\}}\dfrac{1}{4} \left[ \left(\mathrm{I} -  \delta^{(i,j)  }_{i+ \frac\ell2,j+\frac{m}2} \right)
 \Phi_{i+ \frac\ell2,j+\frac{m}2} \right] 
\end{aligned}
\end{equation}
%
%
%
%
%
%
So to prove point 4., we just   check that  $ \Phi_{i+\frac12,j+\frac12} =0$ is also a consequence of  the  steady state preservation condition of Remark~\ref{rem:steady_state_subspace} in \eqref{eq:ssp-cornerF_up_to_ij}. For $\mathscr F_{i,j} = \mathscr F+\alpha_i+\beta_j$ as in \eqref{eq:steady_state_global_flux}, in each dual cell
$$\Phi_{i+\frac12,j+\frac12}=\mathscr F+\alpha_{i+1}+\beta_{j+1}-\mathscr F-\alpha_{i}-\beta_{j+1}-\mathscr F-\alpha_{i+1}-\beta_{j}+\mathscr F+\alpha_{i}+\beta_{j}=0$$
  and thus  $\widehat{\mathscr F}^{(i+\ell,j+m) }_{i+\frac12,j+\frac12}=\mathscr F$.   Finally, the second-order consistency at steady state
is a consequence of results for residual distribution schemes with bounded distribution coefficients on linear and bi-linear elements 
(see \cite{RD-ency,HDR-MR,AR:17} and references therein).
Second-order accuracy at steady state thus follows from the boundedness of the coefficients  
$1/4$, and  $( \mathrm{I} -\delta^{(i+\ell,j+m)  }_{i+\frac12,j+\frac12})/4$ which appear in the equivalent forms of the scheme \eqref{eq:eqq} and \eqref{eq:eqqq}.
\end{proof}
\end{proposition}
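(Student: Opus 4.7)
The plan is to address the five claims in sequence, as each builds on the previous. For point 1, the approach is to substitute the ansatz \eqref{eq:rd1} into the semi-discrete update \eqref{eq:GF2d1}: the key algebraic fact is that at the four corners surrounding cell $(i,j)$ the orientation scalars $n^{(i,j)}$ sum to zero ($+1-1-1+1$ from \eqref{eq:nij}), so the four $\mathscr F_{i,j}\,n^{(i,j)}$ contributions cancel and only the fluctuations $\Phi^{(i,j)}$ remain, yielding exactly \eqref{eq:rd}. The conservation relation \eqref{eq:rd2} is then a direct consequence of \eqref{eq:conservation}, after subtracting $\sum_{\ell,m}\mathscr F_{i+\ell,j+m}\, n^{(i+\ell,j+m)}_{i+\frac12,j+\frac12}$, which by the definition \eqref{eq:rd0} equals $\Phi_{i+\frac12,j+\frac12}$. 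Points 2 and 3 are straightforward identifications: subtracting $\mathscr F_{i+\ell,j+m}\, n^{(i+\ell,j+m)}_{i+\frac12,j+\frac12}$ from the centered flux $\overline{\mathscr F}_{i+\frac12,j+\frac12}\, n^{(i+\ell,j+m)}_{i+\frac12,j+\frac12}$ immediately gives the fluctuation of point 2, and the SUPG term \eqref{eq:SUdiff} is already in the factored form $\delta^{(i+\ell,j+m)}_{i+\frac12,j+\frac12}\,\Phi_{i+\frac12,j+\frac12}/4$, which is exactly the additional contribution needed for point 3.

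For point 4 I would proceed in two steps. First, I would show that $\{\Phi_{i+\frac12,j+\frac12}=0\ \forall i,j\}$ is preserved by the scheme. For the centered method this reduces to the identity \eqref{eq:eqq}, obtained by a direct expansion: summing $\overline{\mathscr F}_{i\pm\frac12,j\pm\frac12}\,n^{(i,j)}_{i\pm\frac12,j\pm\frac12}$ over the four corners of cell $(i,j)$ telescopes into $\frac14(\mathscr F_{i+1,j+1}-\mathscr F_{i+1,j-1}-\mathscr F_{i-1,j+1}+\mathscr F_{i-1,j-1})$, which coincides with $\frac14$ times the sum of the four corner quantities $\Phi_{i\pm\frac12,j\pm\frac12}$ (a routine verification by matching coefficients of the nine involved $\mathscr F$ values). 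For the stabilized method, the dissipation is already proportional to $\Phi_{i\pm\frac12,j\pm\frac12}$ by \eqref{eq:SUdiff}, which yields \eqref{eq:eqqq}. Second, I would verify that the subspace \eqref{eq:steady_state_global_flux} actually lies in this discrete kernel: by direct substitution of $\mathscr F^*_{i,j}=\mathscr F+\alpha_i+\beta_j$ into \eqref{eq:rd0} the rank-one terms $\alpha_i$ and $\beta_j$ cancel pairwise in the alternating corner sum, leaving $\Phi_{i+\frac12,j+\frac12}=0$. Together with the stronger preservation property \eqref{eq:ssp-cornerF_up_to_ij} of Remark \ref{rem:steady_state_subspace}, this establishes the claim.

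For point 5 I would invoke the classical consistency theory for residual distribution schemes on bi-linear elements: in both equivalent representations \eqref{eq:eqq} and \eqref{eq:eqqq} the corner residual $\Phi_{i+\frac12,j+\frac12}$ is distributed to the four neighboring cells with bounded coefficients ($1/4$ in the centered case and $(\mathrm{I}-\delta^{(i+\ell,j+m)}_{i+\frac12,j+\frac12})/4$ in the stabilized one), which is the standard sufficient condition for formal second-order consistency at steady state on such elements, as established in the references cited in the paper. The main obstacle I anticipate is the combinatorial bookkeeping in point 4, specifically the telescoping identity expressing the four $\overline{\mathscr F}\, n^{(i,j)}$ corner contributions as the sum of the four $\Phi_{i\pm\frac12,j\pm\frac12}$; once that identity is in hand, the remaining arguments reduce to the linearity of $\Phi$ in its arguments and the tensor-product structure of the ansatz \eqref{eq:steady_state_global_flux}.
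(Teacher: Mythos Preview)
Your proposal is correct and follows essentially the same approach as the paper's own proof: the same cancellation identity for point 1, the same subtraction of cell contributions for points 2--3, the same two-step argument (rewriting the update as a combination of the corner residuals $\Phi_{i\pm\frac12,j\pm\frac12}$, then checking that the ansatz \eqref{eq:steady_state_global_flux} kills each $\Phi$) for point 4, and the same appeal to bounded-coefficient RD consistency theory for point 5. The only cosmetic difference is that you spell out the derivation of \eqref{eq:rd2} from \eqref{eq:conservation} a bit more explicitly than the paper does.
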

 The switch from conservative finite volume to  residual distribution methods contains some nuances on which we would like to comment. In particular,
 the proof uses   two different writings of the average flux  scheme. This may be  confusing as to which is the proper form to use.
 The confusion is originated from the fact that for this simple case, due to cancellation, 
 the same global assembly can be obtained from several different  local  contributions, not all fitting into the same conservation framework.
 For example, the proof shows that the scheme
\begin{align} \label{eq:wrongcentralRD}
 \Delta x \Delta y \frac{\dd}{\dd t} \bar q_{i,j} +   \dfrac{\Phi_{i+\frac12,j+\frac12}}{4} + \dfrac{\Phi_{i+\frac12,j-\frac12}}{4} + \dfrac{\Phi_{i-\frac12,j+\frac12}}{4}+\dfrac{\Phi_{i-\frac12,j-\frac12}}{4} =0\,
 \end{align}
 is equivalent to the average flux scheme.  In light of \eqref{eq:rd}, this may lead to the conclusion, that  the definition $  \Phi_{i+\frac12,j+\frac12}^{(i,j)}=  \Phi_{i+\frac12,j+\frac12}/4 $ 
 is a viable one; which is, however, wrong. Such a definition 
    is not acceptable   here, as it does not satisfy the  local conservation constraint  \eqref{eq:rd2}, which has a minus on the right hand side. 
    This change of sign is related to the difference between internal and exterior oriented normals.
    Scheme   \eqref{eq:wrongcentralRD} can also be shown to be conservative in the classical cell-vertex residual distribution framework, with  the appropriate  conservation constraint. 
    On Cartesian meshes, this mismatch cancels out and  one ends up with the same  discretization  after assembly.
 However, on general meshes the change in sign
must be carefully accounted for (see e.g. \cite{gaburro2025multidRS}), and the appropriate local conservation and consistency conditions respected. 
In particular, the   central  RD   \eqref{eq:wrongcentralRD} can indeed  be written in a flux form, but the numerical flux has a much more involved expression
than the simple average flux. 
The interested reader can refer to the last reference,  and to \cite{Abgrall2022} for more details. 

\subsection{Source modification to  preserve  solutions at rest}\label{sec:sourceSW}

This subsection has the goal of providing a direct way to embed bathymetry source terms typical of the shallow water system with bottom topography. 
For simplicity, this part revolves around this PDE system, but the same approach can be also applied for other cases. 
In  particular, the goal is to achieve stationarity preservation for motionless equilibria, 
i.e.\ lake at rest preservation, coming from the balance between the hydrodynamic pressure and bottom topography, 
which is present in both 1D and 2D configurations:
\begin{equation}\label{eq:lakeatrest}
  \text{1D:}
  \begin{cases}
    h(x) + b(x) \equiv \eta_0,\\
    u(x)   \equiv 0 ,
  \end{cases}\qquad
  \text{2D:}
  \begin{cases}
    h(x,y) + b(x,y) \equiv \eta_0,\\
    u(x,y) = v(x,y)  \equiv 0. 
  \end{cases}
\end{equation}
As also shown in section \ref{sec:GF1d}, in a 1D global flux framework (\cite{ciallella2023arbitrary}) it has been proposed to integrate the source terms within the flux derivative, thus obtaining a quasi-conservative formulation of the PDE:
\begin{equation}\label{eq:1deqsource}
  \begin{cases}
    \partial_t h   + \partial_x q_x &= 0,\\
    \partial_t q_x + \partial_x \left(\frac{q^2_x}{h} + g\frac{h^2}{2}\right)  &= -gh\partial_x b, \\
  \end{cases} \quad\Longrightarrow\quad
  \begin{cases}
    \partial_t h   + \partial_x q_x  &= 0,\\
    \partial_t q_x + \partial_x \left(\frac{q^2_x}{h} + g\frac{h^2}{2}+\int^x gh\partial_\xi b \dd \xi\right)  &= 0. \\
  \end{cases} 
\end{equation}
However, contrary to classical source terms, the bathymetry term shall be treated differently
given the presence of its derivative. 
To achieve consistency and well-balancedness for the high order method, in \cite{ciallella2023arbitrary}
the integral of the bathymetry source is considered to jump at each interface. 
In the current low order framework the approach amounts to
\begin{align}\label{eq:sourcequad}
  R^x_i = \int^{x_i} gh\partial_\xi b \;\dd \xi &= \int^{x_{i-1}} gh\partial_\xi b \dd \xi + \int_{x_{i-1}}^{x_{i}} gh\partial_\xi b \dd \xi  = R^x_{i-1} + g \frac{h_{i}+h_{i-1}}{2} (b_{i}-b_{i-1}),   
\end{align}
where the second integral has been computed using a consistent approximation of $\partial_\xi b$, 
while for the $h$ a simple trapezoidal rule has been used.

The same approach can also be developed for 2D systems, including the the source term containing $\partial_x$ in the $x$-flux, 
and the one containing $\partial_y$ in the $y$-flux. Starting from equation \eqref{eq:SWE}, we can integrate the source terms present in the momentum equation as follows:
\begin{align}\label{eq:SWEsource}
  \begin{cases}
    \partial_t h   + \partial_x (hu) + \partial_y (hv) = 0\\
    \partial_t (hu) + \partial_x \left(hu^2 + g\frac{h^2}{2} + \int^x gh\partial_\xi b \;\dd \xi\right) + \partial_y \left(huv\right) = 0, \\
    \partial_t (hv) + \partial_x \left(huv\right) + \partial_y \left(hv^2 + g\frac{h^2}{2}+ \int^y gh\partial_\eta b \;\dd \eta\right) = 0.    
  \end{cases}    
\end{align}

\begin{proposition}[Lake at rest preservation]
  The 2D global flux scheme of the system \eqref{eq:SWEsource} with the source term quadrature provided in 
  equation \eqref{eq:sourcequad} is exactly well-balanced for the lake at rest solution \eqref{eq:lakeatrest}. 
\end{proposition}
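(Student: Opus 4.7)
The plan is to verify that at the lake at rest state the update in \eqref{eq:GF2d1} vanishes by showing that the vector-valued 2D global flux $\mathscr F$ falls into the steady state subspace \eqref{eq:steady_state_global_flux} of Remark~\ref{rem:steady_state_subspace}, so that part (4) of Proposition~\ref{prop:RDeq} applies componentwise. Concretely, I will compute $\mathscr F_{i,j}$ for each of the three conservation equations in \eqref{eq:SWEsource} when $h_{i,j}+b_{i,j}\equiv \eta_0$ and $u_{i,j}=v_{i,j}\equiv 0$, and check that $\mathscr F_{i,j} = \mathscr F + \alpha_i + \beta_j$ for some constants and 1D sequences.

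First I would treat the mass equation. At the lake at rest state $hu\equiv 0$ and $hv\equiv 0$, so the recursions \eqref{eq:Frecursion}--\eqref{eq:Grecursion} applied to $f^{(h)}=hu$ and $g^{(h)}=hv$ yield $F^{(h)}_{i,j}=F^{(h)}_{i,0}$ and $G^{(h)}_{i,j}=G^{(h)}_{0,j}$, i.e.\ $F^{(h)}$ depends only on $i$ and $G^{(h)}$ only on $j$. Hence $\mathscr F^{(h)}_{i,j}=\alpha_i+\beta_j$, which fits \eqref{eq:steady_state_global_flux}.

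Next I would treat the $x$-momentum equation. The key observation is a telescopic cancellation: with $h=\eta_0-b$ and the quadrature \eqref{eq:sourcequad},
\begin{equation*}
\tfrac{g}{2}h_i^2 - \tfrac{g}{2}h_{i-1}^2 = \tfrac{g}{2}(h_i+h_{i-1})(h_i-h_{i-1}) = -g\tfrac{h_i+h_{i-1}}{2}(b_i-b_{i-1}) = -(R^x_i-R^x_{i-1}).
\end{equation*}
Therefore $\tfrac{g}{2}h_{i,j}^2 + R^x_{i,j}$ is independent of $i$ (it depends only on $j$ through the initial value of the recursion along the bottom row), and $hu^2\equiv 0$. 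The $y$-flux contribution $huv$ vanishes. Using the recursions \eqref{eq:Frecursion}--\eqref{eq:Grecursion}, this implies $F^{(hu)}$ is a function of $j$ alone and $G^{(hu)}\equiv 0$ (up to a constant), so $\mathscr F^{(hu)}_{i,j} = \beta_j$, again in the admissible form. The $y$-momentum case is symmetric under exchange of $(x,i)\leftrightarrow(y,j)$ and uses the analogous source integral in $y$, giving $\mathscr F^{(hv)}_{i,j} = \alpha_i$.

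With all three components of $\mathscr F_{i,j}$ of the form $\mathscr F + \alpha_i + \beta_j$, item (4) of Proposition~\ref{prop:RDeq} (equivalently, the consistency condition \eqref{eq:ssp-cornerF_up_to_ij}) ensures $\Phi_{i+\frac12,j+\frac12}=0$ at every corner, and the central contribution also reduces to a telescoping sum that cancels; thus every numerical corner flux equals the underlying $\mathscr F$ times the corner normal scalar, the four terms in \eqref{eq:GF2d1} cancel, and $\frac{\dd}{\dd t}\bar q_{i,j}=0$. The main obstacle is the bookkeeping in the second step, where one must verify that the specific trapezoidal quadrature \eqref{eq:sourcequad} produces exactly the discrete difference of $\tfrac{g}{2}h^2$; any other quadrature would break the telescoping and spoil well-balancing. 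A secondary care point is making sure the recursion constants along the first row/column do not destroy the $\alpha_i+\beta_j$ structure, which follows because those constants are intrinsically independent of the other index.
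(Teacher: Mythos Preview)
Your proposal is correct and shares the same core computation as the paper: the telescoping identity
\[
\tfrac{g}{2}h_i^2-\tfrac{g}{2}h_{i-1}^2=-g\tfrac{h_i+h_{i-1}}{2}(b_i-b_{i-1})=-(R^x_i-R^x_{i-1}),
\]
which shows that the source-augmented momentum flux $\tfrac{g}{2}h^2+R^x$ is constant along $i$ at lake at rest. The paper stops there (treating the $hv$ case by symmetry and noting the mixed terms vanish), while you additionally push this through the recursions \eqref{eq:Frecursion}--\eqref{eq:Grecursion} to exhibit each component of $\mathscr F_{i,j}$ in the form $\alpha_i+\beta_j$ and then invoke Proposition~\ref{prop:RDeq}(4); this extra scaffolding makes the link to the 2D update more explicit but is not a different argument.
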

\begin{proof}
To prove that the family of equilibria \eqref{eq:lakeatrest} is exactly preserved when $u=v\equiv0$ and $\eta=h+b\equiv\eta_0$,
Since the mixed terms depend only on the velocity and thus vanish, the two momentum equations can be treated separately for 
the $x$ and $y$ contribution. In particular we want to show that given a zero velocity and constant free surface elevation, 
$f_{i,j} + R^x_{i,j} = f_{i-1,j} + R^x_{i-1,j},\;\forall j$. Without loss of generality, we will show this result only for the $x$ direction.

By substitution of the relevant quantities, we obtain
\begin{align}
  f_i + R^x_i - f_{i-1} - R^x_{i-1} &= g\frac{h^2_i-h^2_{i-1}}{2} +  g \frac{h_i+h_{i-1}}{2}\left(b_{i}-b_{i-1}\right) \\
  &= g \frac{h_i+h_{i-1}}{2} \left(\eta_i - \eta_{i-1}\right) = 0,\qquad\forall j,
\end{align}
where the last equality holds when $\eta_i = \eta_{i-1}\equiv\eta_0$, with $\eta_i = h_i + b_i$.
\end{proof}

\RII{
\begin{remark}[Compactness of bathymetry source term]
  Following the notation used in section~\ref{sec:Compactness}, we can derive a compact definition of the method also for a source term defined as above. In particular, the central discretization will include in the update equation of $hu_{i,j}$ the following term
  \begin{align}\frac{1}{\Delta x}
    \left \langle g \frac{h_{i+1,\cdot}+h_{i,\cdot}}{2}(b_{i+1,\cdot}-b_{i,\cdot}) +g \frac{h_{i,\cdot}+h_{i-1,\cdot}}{2}(b_{i,\cdot}-b_{i-1,\cdot})\right\rangle_j,
  \end{align}
  which is clearly compact. And similarly, for the equation of $hv$. On the other hand, in the diffusion part, we will have the following term entering the definition of $\Phi^{R,hu}_{i+\frac12,j+\frac12}$
\begin{align}
  \Delta y\left \langle 
      g \frac{h_{i+1,\cdot}+h_{i,\cdot}}{2}(b_{i+1,\cdot}-b_{i,\cdot})
  \right\rangle_{j+\frac12},
\end{align}
where $\langle z_{i+\frac12,\cdot} \rangle_{j+\frac12} =z_{i+\frac12,j+1}+z_{i+\frac12,j}$
and similarly for $\Phi^{R,hv}_{i+\frac12,j+\frac12}$. 
This shows that the whole method, also in the lake-at-rest well-balanced version, has a compact stencil of size $3\times 3$. 

\end{remark}
}%

\subsection{Compatible boundary conditions}\label{sec:bc}

Boundary conditions play an essential role in the practical application of numerical methods.
Let us for example consider the usual ghost cell approach. In a classical dimension-by-dimension 
finite volume method, homogeneous Neumann boundary conditions on the state variables can be simply enforced by copying the state.
This is somewhat consistent with the internal treatment based on  one dimensional  Riemann  fluxes using two states.
However,  this approach cannot be steady state preserving since it is not based on the global flux. One should consider corner fluxes on the boundaries,
and choose the ghost states consistently with  the equilibrium condition. 
Here, we construct compatible Neumann 
boundaries  based on the  discrete constraint  $\Phi_{\text{corner}} = 0$, which we have shown to be the relevant multi-dimensional  characterization of our discrete steady states.  Compatible transmissive conditions require
this relation to be verified by the ghost cells at each  boundary corner.

Let us consider the right boundary domain (see figure \ref{fig:ghostcell}). We can impose $\mathscr F_{N+1,j-1}:=\mathscr F_{N,j-1}$ for all $j$ on the global fluxes, instead of computing the state variables
as for classical homogeneous Neumann conditions. Then, we can use use directly the global flux at the boundary corners. This will lead to the following relation 
\begin{equation} 
  \Phi_{N+\frac12,j-\frac12} = \mathscr F_{N+1,j} - \mathscr F_{N,j} - \mathscr F_{N+1,j-1} + \mathscr F_{N,j-1}=0,
\end{equation} 
which gives a steady state solution.

On corner ghosts, similarly, one has to impose 
$
\mathscr{F}_{N_x+1,N_y+1}:=\mathscr{F}_{N_x,N_y+1},
$
if also on the top side transmissive conditions must be imposed, this will results in 
$$
\mathscr{F}_{N_x+1,N_y+1}=\mathscr{F}_{N_x,N_y+1}=\mathscr{F}_{N_x+1,N_y}=\mathscr{F}_{N_x,N_y}.
$$

Similar ideas can be used also for   other  boundary types, but in our experience transmissive conditions are 
the most critical  to correctly maintain the internal structure, since they involve no external data, which
 provide some link to the correct solution for other boundary conditions.
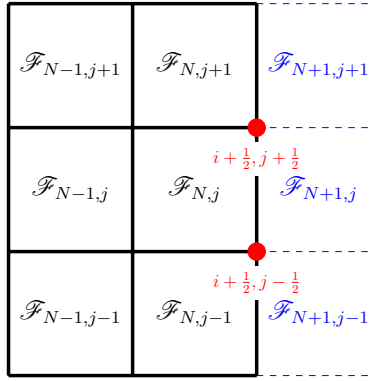
\begin{figure}
  \centering
   \begin{adjustbox}{width=0.3\textwidth}
    \begin{tikzpicture}
        \draw[dashed,blue] (4,0) grid [step=2] (6,6);
        \draw[ultra thick] (0,0) grid [step=2] (4,6);
        \draw[red,fill=red] (4,4) circle[radius=4pt];
        \draw[red,fill=red] (4,2) circle[radius=4pt];
        \node[scale=0.7,red,fill=white] at (4,3.5) {\( i+\frac12,j+\frac12 \)};
        \node[scale=0.7,red,fill=white] at (4,1.5) {\( i+\frac12,j-\frac12 \)};
        
        \node at (1,5) {\( \mathscr F_{N-1,j+1} \)};
        \node at (3,5) {\( \mathscr F_{N,j+1} \)};
        \node[blue] at (5,5) {\( \mathscr F_{N+1,j+1} \)};
        
        \node at (1,3) {\( \mathscr F_{N-1,j} \)};
        \node at (3,3) {\( \mathscr F_{N,j} \)};
        \node[blue] at (5,3) {\( \mathscr F_{N+1,j} \)};
        
        \node at (1,1) {\( \mathscr F_{N-1,j-1} \)};
        \node at (3,1) {\( \mathscr F_{N,j-1} \)};
        \node[blue] at (5,1) {\( \mathscr F_{N+1,j-1} \)};
    \end{tikzpicture}
  \end{adjustbox}
  \caption{Ghost cell labelling: ghost cells are highlighted in blue.}\label{fig:ghostcell}
  \end{figure}

When seeking to preserve steady states, it is crucial to ensure that the number of equations imposed—either by boundary conditions or by the steady state equations—does not exceed the number of unknowns, or that these equations are mutually compatible.

The steady state conditions enforced by $\Phi_{i+\frac12,j+\frac12}$ for $i=0,\dots,N_x$ and $j=0,\dots,N_y$ introduce $N_{eq}(N_x+1)(N_y+1)$ linearly independent constraints on $N_{eq}N_xN_y$ unknowns $\bar q_{i,j}$ (for $i=1,\dots,N_x$, $j=1,\dots,N_y$). To satisfy these extra constraints, $N_{eq}(2N_x + 2N_y + 4)$ ghost cell values are introduced. This leaves $N_{eq}(N_x + N_y + 3)$ equations that can be specified at the boundaries, typically through Dirichlet conditions. Homogeneous Neumann conditions, as previously discussed, are compatible with the internal constraints and do not introduce additional equations.
Therefore, the $N_{eq}(N_x + N_y + 3)$ remaining degrees of freedom correspond to at most two sides where Dirichlet boundary conditions can be imposed, and if not consecutive sides, they should not have all corners included.

In summary, when seeking equilibria,   the boundary conditions must be  compatible with the internal constraints. It is only possible, and necessary, to impose complete Dirichlet conditions on (at most) two boundaries.
In this respect, this work is  changing the perspective on this  issue. For many years, schemes similar to the one obtained here have been considered as flawed due to the existence of the  steady states 
characterized by proposition~\ref{prop:RDeq}. This is due to the fact that spurious oscillating modes  may also satisfy the condition $\Phi_{i+1/2,j+1/2}=0$ $\forall i,\, j$ (see e.g. 
\cite{HDR-MR, AR:17} and references therein).  However, this is only true if one considers the problem locally, which is a wrong way to define multidimensional steady states as they must include boundary conditions. 
If these are imposed in a compatible manner, spurious modes can be controlled.
This work, as well as the work discussed in \cite{barsukow2025structure}, contributes to rectifying this notion.

\section{Standard finite volume scheme used for comparison}\label{sec:FVcomparison}

In this section, we present the standard finite volume (FV) scheme used for comparison with the novel global flux (GF) 
scheme in the numerical experiments presented in section \ref{sec:numerical}.
The classical finite volume formulation for the 2D nonlinear hyperbolic problem \eqref{eq:2Dbalancelaw} can be
written by integrating it in the cell $C_{i,j}$:
\begin{align}
  \frac{\dd}{\dd t} \bar q_{i,j} + \frac{\hat f_{i+\frac12,j}-\hat f_{i-\frac12,j}}{\Delta x} + \frac{\hat g_{i,j+\frac12}-\hat g_{i,j-\frac12}}{\Delta y} = \bar s_{i,j},
\end{align}
where the numerical flux $\hat f_{i+\frac12,j}$ is computed through the local Lax-Friedrichs (or Rusanov) flux:
\begin{align}
  \hat f_{i+\frac12,j}= \frac12\left( f^L_{i+\frac12,j} + f^R_{i+\frac12,j} \right) - \frac{\lambda_m}{2} \left( q^R_{i+\frac12,j} - q^L_{i+\frac12,j} \right),
\end{align}
where 
\begin{align}
  f^L_{i+\frac12,j} &= f(q^L_{i+\frac12,j}),\qquad f^R_{i+\frac12,j} = f(q^R_{i+\frac12,j}), 
\end{align}
and similarly for the others. The source term is computed by integrating the source term over the cell $C_{i,j}$:
\begin{align}
  \bar s_{i,j} = \frac{1}{\Delta x \Delta y} \int_{x_{i-\frac12}}^{x_{i+\frac12}}\int_{y_{j-\frac12}}^{y_{j+\frac12}} s \dd x \dd y.
\end{align}
In this work, we are going to compare the first order global flux scheme against the standard finite volume with both
piece-wise constant (first order accurate) and piece-wise linear (second order accurate) reconstructions.
Since the solution update can be performed in a dimension-by-dimension way, we can focus only on the 
$x$-direction for simplicity.
For piece-wise constant reconstruction, the left and right states at interface $i+\frac12$ simply are
\begin{equation}
  q^L_{i+\frac12,j} = \bar q_{i,j},\qquad q^R_{i+\frac12,j} = \bar q_{i+1,j}, \qquad\forall j.
\end{equation}
While in the second order case, the left and right states are computed through a piece-wise linear 
reconstruction of the solution as
\begin{equation}
  \tilde q (x,y ) =\bar  q_{i,j} + (x-x_i) \left(\del_x q\right)_{i,j} + (y-y_j) \left(\del_y q\right)_{i,j}, \quad x,y\in C_{i,j}. 
\end{equation} 
Hence, the left and right states at interface $i+\frac12$ are
\begin{equation}
  q^L_{i+\frac12,j} = \bar q_{i,j} + \frac{\Delta x}{2} \left(\del_x q\right)_{i,j},\qquad q^R_{i+\frac12,j} =\bar  q_{i+1,j} - \frac{\Delta x}{2} \left(\del_x q\right)_{i+1,j}, \qquad\forall j.
\end{equation}
Here, the slopes $\left(\del_x q\right)_{i,j}$ are evaluated using the generalized minmod limiter \cite{nessyahu1990non}:
\begin{equation}
  \left(\del_x q\right)_{i,j} = \text{minmod}\left(\theta\frac{\bar q_{i+1,j}-\bar q_{i,j}}{\Delta x},\frac{\bar q_{i+1,j}-\bar q_{i-1,j}}{2\Delta x},\theta\frac{\bar q_{i,j}-\bar q_{i-1,j}}{\Delta x}\right), 
\end{equation}
where $\theta$ is used to control the amount of dissipation.
In particular, the larger $\theta$ is, the sharper and more oscillatory the reconstruction will be. 
For the simulations presented in section \ref{sec:numerical}, when not specified, we set the parameter $\theta=1.3$.
The same approach has been used along the $y$ direction.

The classical minmod function is defined as
\begin{equation}
  \text{minmod}(a,b,c) = \begin{cases}
    \min(a,b,c), & \text{if } a,b,c>0,\\
    \max(a,b,c), & \text{if } a,b,c<0,\\
    0,         & \text{otherwise}.
  \end{cases}
\end{equation}

\section{Numerical experiments}\label{sec:numerical}

In this section, the goal is to show the performance of the new global flux scheme (GF)
when compared to the classical finite volume first order (FV-1) and second order (FV-2) approaches presented in section \ref{sec:FVcomparison}. 
Several test cases are presented to study the impact of the method on both linear and nonlinear hyperbolic problems: 
linear acoustics, shallow water and the Euler equations.
The numerical experiments are performed taking the gravity $g=9.812$, for the shallow water system, 
and the ratio of specific heats $\gamma=1.4$ for the Euler system.
All convergence analyses are performed on a set of nested quadrilateral meshes with $N_x=N_y=20,40,80,160,320$.
Time integration is performed through classical explicit Euler and second order Runge-Kutta methods.

\subsection{Linear acoustic system}
\subsubsection{Stationary vortex}

The first test case considered here concerns the simulation of the linear acoustic system.
The initial condition is a compactly supported vortex centered in $(x_0,y_0)=(0.5,0.5)$ defined on 
the square $[0,1]\times[0,1]$ with periodic boundary conditions, which is given by
\begin{align*}
  p(x,y) &=1,\\
  u(x,y) &= (y-y_0) f(\rho(x,y)),\\
  v(x,y) &=-(x-x_0) f(\rho(x,y)),
\end{align*}
with $\rho(x,y)=\frac{\sqrt{(x-x_0)^2+(y-y_0)^2}}{r_0}$, where $r_0=0.45$,
$f(\rho) = \gamma(1+\cos(\pi\rho))^2$ and $\gamma=\frac{12\pi\sqrt{0.981}}{r_0\sqrt{315\pi^2-2048}}$.
This vortex is taken from the work \cite{ricchiuto2021analytical}, where its derivation is described. 
This initial condition is a steady state of the acoustic system.

In table \ref{tab:ACvortexconv}, the errors computed with the $L_2$ norm and convergence rates are shown.
As can be noticed, the GF method outperforms the standard FV-1 and FV-2 methods in terms of discretization errors.
Although the GF is in principle first order accurate due to the piece-wise constant
reconstruction, a superconvergence behavior is experienced for stationary solutions (compare Proposition \ref{prop:RDeq}).
Hence, the GF method is not only able to preserve the vortex structure, but does so at
second order accuracy. 
\RI{In figure~\ref{fig:acoustics_vortex_N_error}, we compare the computational time with the $L^\infty$ error and with the mesh size of the three methods. The computational cost of the GF-FV is slightly larger than the one of the classical FV-1 (on average less than a factor 2), while it is comparable with the compuational costs of FV-2. Still, the error of the GF is many order of magnitude better than classical schemes.}
In particular, this is evident when increasing the final time of the simulation, as shown in figure \ref{fig:acoustic_longtime}.
Classical methods like FV-1 and FV-2 are not able to preserve the vortex structure for long times,
due to their numerical dissipation that spoils the final solution.

\begin{table}
  \caption{Linear acoustic system: vortex ($t_f=1$). $L_2$ error and order of accuracy $\tilde{n}$ for FV-1, FV-2 and GF.}\label{tab:ACvortexconv}
  \footnotesize
  \centering
  \begin{tabular}{ccccccc}
          \hline
          &\multicolumn{2}{c}{$p$} &\multicolumn{2}{c}{$u$}   &\multicolumn{2}{c}{$v$}\\[0.5mm]
          \cline{2-7}
          $N_x,N_y$ & $L_2$        & $\tilde{n}$ & $L_2$        & $\tilde{n}$ & $L_2$      & $\tilde{n}$ \\[0.5mm]\hline
          &\multicolumn{6}{c}{FV-1}\\[0.5mm]
          20  &  3.51E-05  &   --   &  6.51E-02   &  --  & 6.51E-02 &   --   \\
          40  &  4.58E-05  & -0.38  &  5.42E-02   & 0.26 & 5.42E-02 &  0.26  \\
          80  &  2.71E-05  &  0.75  &  3.97E-02   & 0.44 & 3.97E-02 &  0.44  \\
          160 &  1.06E-05  &  1.35  &  2.54E-02   & 0.64 & 2.54E-02 &  0.64  \\
          320 &  3.34E-06  &  1.66  &  1.47E-02   & 0.79 & 1.47E-02 &  0.79  \\
          &\multicolumn{6}{c}{FV-2}\\[0.5mm]
          20  &  4.31E-04  &  --  &  2.58E-02   &  --   & 2.58E-02 &   --   \\
          40  &  2.54E-04  & 0.76 &  6.12E-03   &  2.07 & 6.12E-03 &   2.07 \\
          80  &  6.37E-05  & 1.99 &  1.61E-03   &  1.93 & 1.61E-03 &   1.93 \\
          160 &  1.29E-05  & 2.30 &  4.46E-04   &  1.84 & 4.46E-04 &   1.84 \\
          320 &  2.73E-06  & 2.24 &  1.25E-04   &  1.83 & 1.25E-04 &   1.83 \\
          &\multicolumn{6}{c}{GF}\\[0.5mm]
          20  &  4.72E-05  &  --    &  3.95E-04   &   --  & 3.95E-04  &  --  \\
          40  &  4.53E-05  & 0.05   &  9.17E-05   &  2.10 & 9.17E-05  & 2.10 \\
          80  &  1.95E-05  & 1.21   &  2.26E-05   &  2.01 & 2.26E-05  & 2.01 \\
          160 &  6.42E-06  & 1.60   &  5.58E-06   &  2.01 & 5.58E-06  & 2.01 \\
          320 &  1.85E-06  & 1.79   &  1.38E-06   &  2.01 & 1.38E-06  & 2.01 \\
          \hline\hline\\[1pt]
  \end{tabular}
  \end{table}

\begin{figure}
\centering
\begin{tikzpicture}
\begin{loglogaxis}[
    width=0.48\textwidth,
    xlabel={Computational time [s]},
    ylabel={$L_\infty$ error in $u$},
    legend style={font=\small, legend pos=south west},
    grid=major,
    tick label style={font=\small},
]
\addplot+[thick, blue, mark=*] table [x=comp_time, y=u_Linf, col sep=comma] {acoustics_vortex/error_FV.csv};
\addlegendentry{FV}
\addplot+[thick, gray, mark=square*] table [x=comp_time, y=u_Linf, col sep=comma] {acoustics_vortex/error_FV2.csv};
\addlegendentry{FV2}
\addplot+[thick, magenta, mark=triangle*] table [x=comp_time, y=u_Linf, col sep=comma] {acoustics_vortex/error_GF_efficient.csv};
\addlegendentry{GF}
\end{loglogaxis}
\end{tikzpicture}
\hfill
\begin{tikzpicture}
	\begin{loglogaxis}[
		width=0.48\textwidth,
		ylabel={$N_x=N_y$},
		xlabel={Computational time [s]},
		legend style={font=\small, legend pos=north west},
		grid=major,
		tick label style={font=\small},
		]
		\addplot+[thick, blue, mark=*] table [x=comp_time, y=N, col sep=comma] {acoustics_vortex/error_FV.csv};
		\addlegendentry{FV}
		\addplot+[thick, gray, mark=square*] table [x=comp_time, y=N, col sep=comma] {acoustics_vortex/error_FV2.csv};
		\addlegendentry{FV2}
		\addplot+[thick, magenta, mark=triangle*] table [x=comp_time, y=N, col sep=comma] {acoustics_vortex/error_GF_efficient.csv};
		\addlegendentry{GF}
	\end{loglogaxis}
\end{tikzpicture}
\caption{Comparison of computational time vs $L_\infty$ error in $u$ and vs grid size $N_x$ for FV, FV-2, and GF methods.}
\label{fig:acoustics_vortex_N_error}
\end{figure}

\begin{figure}
  \centering
  \subfigure[FV-1]{\includegraphics[width=0.3\textwidth,height=0.25\textwidth,trim={38cm 0cm 0cm 0cm},clip]{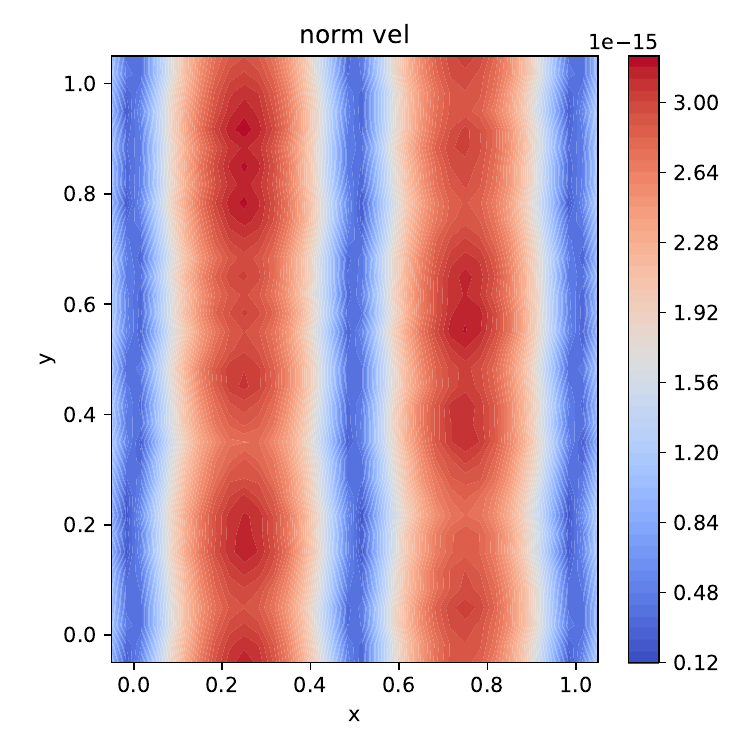}}\qquad
  \subfigure[FV-2]{\includegraphics[width=0.3\textwidth,height=0.25\textwidth,trim={38cm 0cm 0cm 0cm},clip]{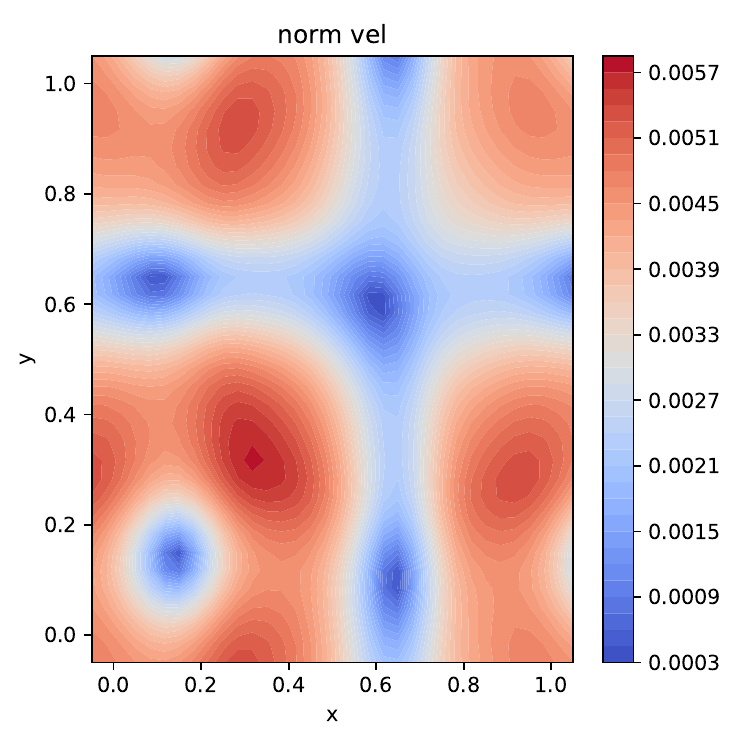}}\qquad
  \subfigure[GF]  {\includegraphics[width=0.3\textwidth,height=0.25\textwidth,trim={38cm 0cm 0cm 0cm},clip]{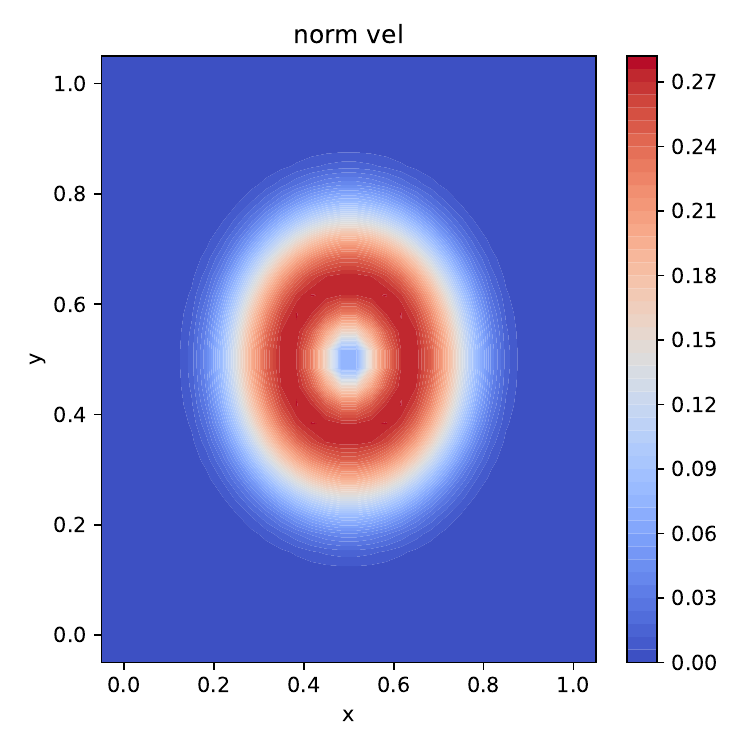}}
  \caption{Linear acoustic system: vortex. Isocontours of the velocity norm obtained with FV-1, FV-2 and GF after a long time integration ($t_f=200$).}
  \label{fig:acoustic_longtime}
\end{figure}

\subsection{Euler equations}
\subsubsection{Isentropic vortex}

In this section, we test the proposed method on a smooth isentropic vortex \cite{jiang1999high}.
The initial condition is given in terms of primitive variables and it consists in superposition of a
homogeneous background flow and a perturbation:
$$ \left(\rho,u,v,p\right) = \left(1+\delta\rho,\,u_0+\delta u,\,v_0+\delta v,\,1+\delta p\right).$$
The test case is set up in a $[0,10]\times[0,10]$ domain with periodic boundary conditions and vortex radius $r=\sqrt{(x-5)^2+(y-5)^2}$.
The vortex strength is $\epsilon=5$, and the entropy perturbation is assumed to be zero.
Given these hypotheses, the perturbations on velocity and temperature can be written as
$$ \begin{bmatrix} \delta u \\ \delta v \end{bmatrix} = \frac{\epsilon}{2\pi}\exp\left(\frac{1-r^2}{2}\right)\begin{bmatrix} -(y-5) \\ (x-5) \end{bmatrix},\qquad
\delta T = -\frac{(\gamma-1)\epsilon^2}{8\gamma\pi^2}\exp(1-r^2).$$
It follows that the perturbations on density and pressure read
$$ \delta\rho = (1+\delta T)^{\frac{1}{\gamma-1}}-1 ,\qquad \delta p = (1+\delta T)^{\frac{\gamma}{\gamma-1}}-1. $$
This test case is a stationary solution of the Euler equations.   \new{Note, that the maximum Mach number for this set up is about Ma= 0.7.}

In table~\ref{tab:SHUconv}, the convergence analysis for the isentropic vortex is presented by comparing the FV-1, FV-2 and GF methods
by running the simulation of a static vortex, i.e.\ $u_0=v_0=0$, until a final time $t_f=1$. 
As observed above, the GF shows superconvergent behavior with order 2. 
In terms of discretization errors, it outperforms not only the classical piecewise constant finite volume method, 
but also the second-order approach equipped with a linear reconstruction.
\RI{In figure~\ref{fig:Shu_steady_vortex_N_error}, we compare the computational costs of the method with respect to the $L^\infty$ error of $\rho u$ and with the mesh size. We confirm also for the nonlinear case that the GF is slightly more expensive of FV-1, but comparable to FV-2, while the error is increadibly smaller.}
Even after very long simulations times (see figure \ref{fig:Shu_longtime}) the new GF method is able to maintain the vortex, 
while the first order FV-1 dissipates everything away, and FV-2 significantly distorts the vortex structure and still diffuses it more than GF. 
Observe that the nonlinearity of the equations makes this test significantly more challenging than the corresponding test for linear acoustics, 
where in particular advection is not present.

In table~\ref{tab:SHUconv_moving}, the convergence analysis for a moving isentropic vortex is presented with $u_0=v_0=1$ and a final time $t_f=10$.
Here, we can directly observe that the GF method is indeed first order accurate, as expected, since the reconstruction is piecewise constant.
No superconvergence is observed in this case, as the solution is not stationary. The results show an relevant improvement of the GF method over the FV-1 in both
discretization errors and convergence rates, while the FV-2 method is, in this case, the best since it is able to achieve second order accuracy.
In figure \ref{fig:Shu_moving}, the solution at the final time is shown for the three methods.

\begin{table}
  \caption{Euler equations: isentropic vortex with $u_0=v_0=0$ ($t_f=1$). $L_2$ error and order of accuracy $\tilde{n}$ for FV-1, FV-2 and GF methods.}
  \label{tab:SHUconv}
  \footnotesize
  \centering
  \begin{tabular}{ccccccccc}
          \hline\hline
          &\multicolumn{2}{c}{$\rho $} &\multicolumn{2}{c}{$\rho u$}   &\multicolumn{2}{c}{$\rho v$} &\multicolumn{2}{c}{$\rho E$}\\[0.5mm]
          \cline{2-9}
          $N_x,N_y$ & $L_2$        & $\tilde{n}$ & $L_2$        & $\tilde{n}$ & $L_2$      & $\tilde{n}$ & $L_2$      & $\tilde{n}$ \\[0.5mm]\hline
          &\multicolumn{8}{c}{FV-1}\\[0.5mm]
          20  & 3.58E-01  &  --   &  6.77E-01  &  --   & 6.77E-01  &  --   & 1.16E+00  &  --    \\
          40  & 2.47E-01  & 0.53  &  4.40E-01  & 0.62  & 4.40E-01  & 0.62  & 8.29E-01  & 0.48   \\
          80  & 1.49E-01  & 0.72  &  2.59E-01  & 0.76  & 2.59E-01  & 0.76  & 5.15E-01  & 0.68   \\
          160 & 8.33E-02  & 0.84  &  1.43E-01  & 0.85  & 1.43E-01  & 0.85  & 2.91E-01  & 0.82   \\
          320 & 4.42E-02  & 0.91  &  7.56E-02  & 0.91  & 7.56E-02  & 0.91  & 1.56E-01  & 0.90   \\
          &\multicolumn{8}{c}{FV-2}\\[0.5mm]
          20  & 1.06E-01  &   --  & 2.05E-01   &  --  & 2.00E-01  &  --   &  4.32E-01 &  --    \\
          40  & 3.62E-02  & 1.55  & 6.74E-02   & 1.60 & 6.71E-02  & 1.57  &  1.20E-01 & 1.85   \\
          80  & 1.07E-02  & 1.76  & 1.93E-02   & 1.80 & 1.95E-02  & 1.78  &  2.91E-02 & 2.04   \\
          160 & 2.39E-03  & 2.16  & 5.58E-03   & 1.78 & 5.61E-03  & 1.79  &  7.04E-03 & 2.04   \\
          320 & 5.12E-04  & 2.22  & 1.39E-03   & 2.00 & 1.39E-03  & 2.01  &  1.56E-03 & 2.17   \\
          &\multicolumn{8}{c}{GF}\\[0.5mm]
          20  & 1.52E-02  &  --   & 3.67E-02  &  --  & 3.67E-02  &   --  & 4.59E-02  &  --   \\
          40  & 5.95E-03  & 1.35  & 1.15E-02  & 1.67 & 1.15E-02  & 1.67  & 1.54E-02  & 1.57   \\
          80  & 1.76E-03  & 1.76  & 3.06E-03  & 1.90 & 3.06E-03  & 1.90  & 4.35E-03  & 1.82   \\
          160 & 4.69E-04  & 1.90  & 7.87E-04  & 1.96 & 7.87E-04  & 1.96  & 1.16E-03  & 1.90   \\
          320 & 1.21E-04  & 1.95  & 2.00E-04  & 1.97 & 2.00E-04  & 1.97  & 3.02E-04  & 1.94   \\
          \hline\hline\\[1pt]
  \end{tabular}
  \end{table}

\begin{figure}
	\centering
	\begin{tikzpicture}
		\begin{loglogaxis}[
			width=0.48\textwidth,
			xlabel={Computational time [s]},
			ylabel={$L_\infty$ error in $\rho u$},
			legend style={font=\small, legend pos=south west},
			grid=major,
			tick label style={font=\small},
			]
			\addplot+[thick, blue, mark=*] table [x=comp_time, y=rhou_Linf, col sep=comma] {Euler_vortex_Shu/error_FV.csv};
			\addlegendentry{FV}
			\addplot+[thick, gray, mark=square*] table [x=comp_time, y=rhou_Linf, col sep=comma] {Euler_vortex_Shu/error_FV2.csv};
			\addlegendentry{FV2}
			\addplot+[thick, magenta, mark=triangle*] table [x=comp_time, y=rhou_Linf, col sep=comma] {Euler_vortex_Shu/error_GF_efficient.csv};
			\addlegendentry{GF}
		\end{loglogaxis}
	\end{tikzpicture}
	\hfill
	\begin{tikzpicture}
		\begin{loglogaxis}[
			width=0.48\textwidth,
			ylabel={$N_x=N_y$},
			xlabel={Computational time [s]},
			legend style={font=\small, legend pos=north west},
			grid=major,
			tick label style={font=\small},
			]
			\addplot+[thick, blue, mark=*] table [x=comp_time, y=N, col sep=comma] {Euler_vortex_Shu/error_FV.csv};
			\addlegendentry{FV}
			\addplot+[thick, gray, mark=square*] table [x=comp_time, y=N, col sep=comma] {Euler_vortex_Shu/error_FV2.csv};
			\addlegendentry{FV2}
			\addplot+[thick, magenta, mark=triangle*] table [x=comp_time, y=N, col sep=comma] {Euler_vortex_Shu/error_GF_efficient.csv};
			\addlegendentry{GF}
		\end{loglogaxis}
	\end{tikzpicture}
	\caption{Comparison of computational time vs $L_\infty$ error in $\rho u$ and vs grid size $N_x$ for FV, FV-2, and GF methods.}
	\label{fig:Shu_steady_vortex_N_error}
\end{figure}

\begin{table}
  \caption{Euler equations: isentropic vortex with $u_0=v_0=1$ ($t_f=10$). $L_2$ error and order of accuracy $\tilde{n}$ for FV-1, FV-2 and GF methods.}
  \label{tab:SHUconv_moving}
  \footnotesize
  \centering
  \begin{tabular}{ccccccccc}
          \hline\hline
          &\multicolumn{2}{c}{$\rho $} &\multicolumn{2}{c}{$\rho u$}   &\multicolumn{2}{c}{$\rho v$} &\multicolumn{2}{c}{$\rho E$}\\[0.5mm]
          \cline{2-9}
          $N_x,N_y$ & $L_2$        & $\tilde{n}$ & $L_2$        & $\tilde{n}$ & $L_2$      & $\tilde{n}$ & $L_2$      & $\tilde{n}$ \\[0.5mm]\hline
          &\multicolumn{8}{c}{FV-1}\\[0.5mm]
          20  &  6.50E-01 &  --   &  1.54E+00  &  --   &  1.54E+00  &  --   &  3.12E+00  &  --   \\
          40  &  6.21E-01 &  0.06 &  1.46E+00  &  0.07 &  1.46E+00  &  0.07 &  3.01E+00  &  0.05 \\
          80  &  5.82E-01 &  0.09 &  1.31E+00  &  0.15 &  1.31E+00  &  0.15 &  2.83E+00  &  0.09 \\
          160 &  5.13E-01 &  0.18 &  1.06E+00  &  0.30 &  1.07E+00  &  0.29 &  2.48E+00  &  0.19 \\
          320 &  4.01E-01 &  0.35 &  7.58E-01  &  0.49 &  7.63E-01  &  0.49 &  1.92E+00  &  0.36 \\
          &\multicolumn{8}{c}{FV-2}\\[0.5mm]
          20  & 5.29E-01  &  --   &  1.07E+00  &  --   & 1.12E+00 &   --  &  2.48E+00  &  --   \\
          40  & 2.45E-01  &  1.10 &  4.42E-01  &  1.28 & 4.84E-01 &  1.21 &  1.17E+00  &  1.08 \\
          80  & 6.55E-02  &  1.90 &  1.26E-01  &  1.80 & 1.37E-01 &  1.81 &  2.82E-01  &  2.04 \\
          160 & 1.85E-02  &  1.82 &  3.22E-02  &  1.97 & 3.42E-02 &  2.00 &  6.03E-02  &  2.22 \\
          320 & 3.38E-03  &  2.45 &  7.49E-03  &  2.10 & 8.12E-03 &  2.07 &  1.28E-02  &  2.23 \\
          &\multicolumn{8}{c}{GF}\\[0.5mm]
          20  & 5.46E-01  &  --   &  1.30E+00  &  --   &  1.13E+00  &  --   &  2.58E+00  &  --   \\
          40  & 4.44E-01  & 0.30  &  1.02E+00  &  0.34 &  7.71E-01  &  0.55 &  2.10E+00  &  0.29   \\
          80  & 3.18E-01  & 0.48  &  6.96E-01  &  0.56 &  4.92E-01  &  0.64 &  1.53E+00  &  0.45   \\
          160 & 1.99E-01  & 0.67  &  4.12E-01  &  0.75 &  2.96E-01  &  0.73 &  9.73E-01  &  0.65   \\
          320 & 1.12E-01  & 0.82  &  2.24E-01  &  0.87 &  1.68E-01  &  0.81 &  5.55E-01  &  0.80   \\
          \hline\hline\\[1pt]
  \end{tabular}
  \end{table}

  \begin{figure}
    \centering
    \subfigure[FV-1]{\includegraphics[width=0.3\textwidth,height=0.25\textwidth,trim={51cm 0cm 0cm 0cm},clip]{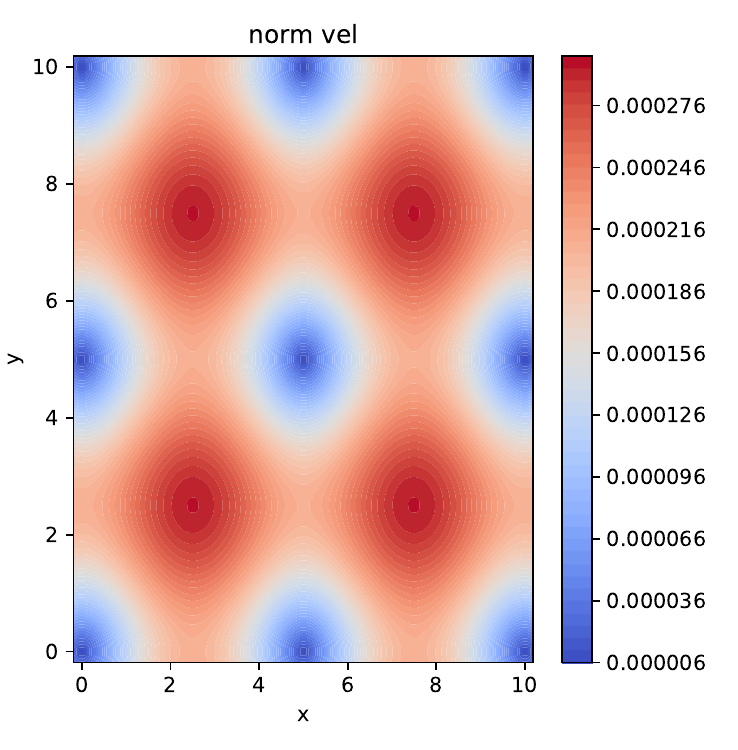}}\qquad
    \subfigure[FV-2]{\includegraphics[width=0.3\textwidth,height=0.25\textwidth,trim={51cm 0cm 0cm 0cm},clip]{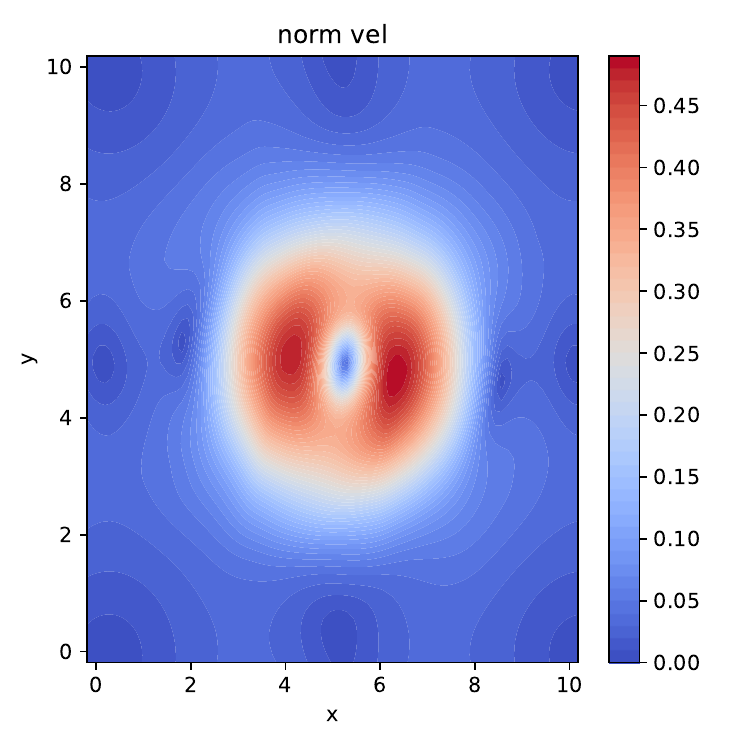}}\qquad
    \subfigure[GF]{\includegraphics[width=0.3\textwidth,height=0.25\textwidth,trim={51cm 0cm 0cm 0cm},clip]{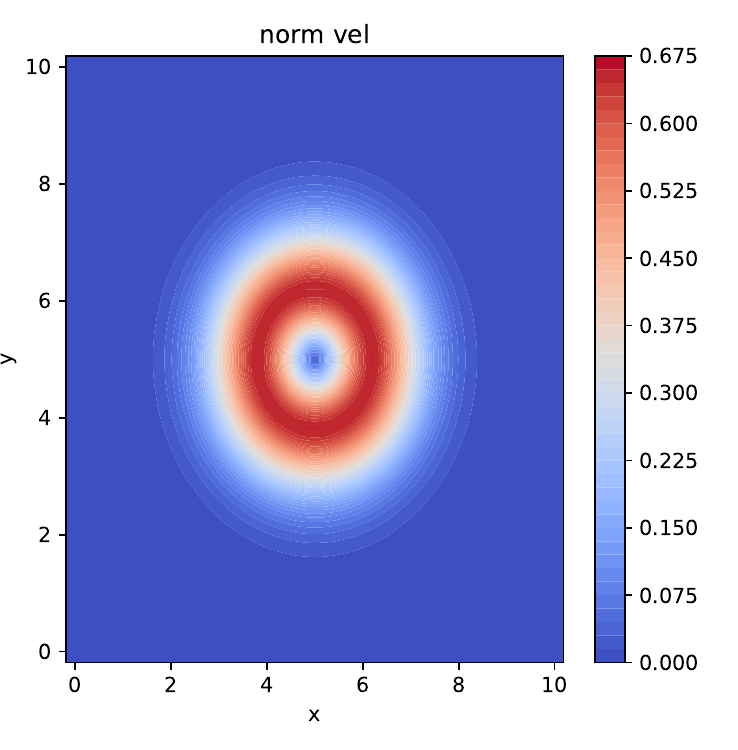}}
    \caption{Euler equations: isentropic vortex with $u_0=v_0=0$. Isocontours of the velocity norm obtained with FV-1, FV-2 and GF after a long time integration ($t_f=200$).}
    \label{fig:Shu_longtime}
  \end{figure}

  \begin{figure}
    \centering
    \subfigure[FV-1]{\includegraphics[width=0.3\textwidth,height=0.25\textwidth,trim={51cm 0cm 0cm 0cm},clip]{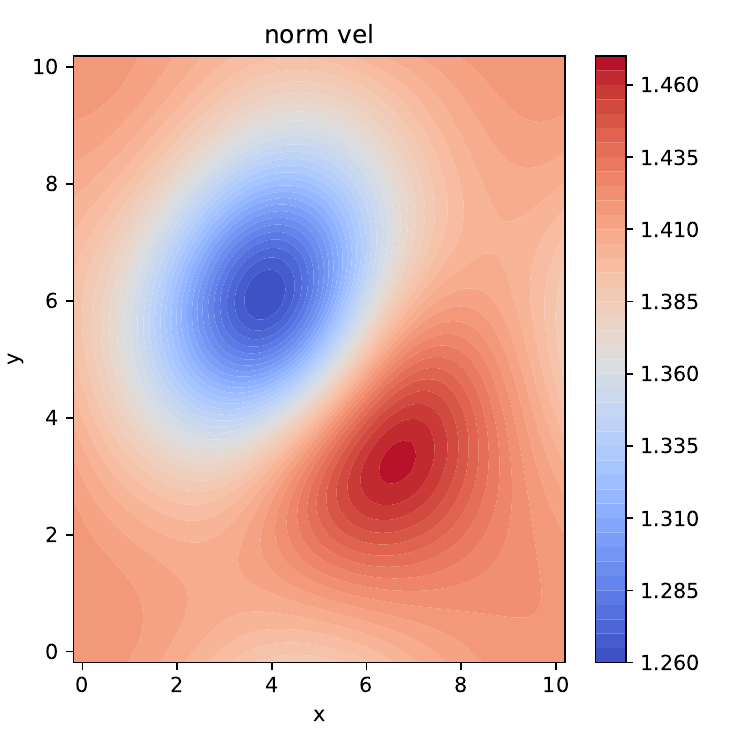}}\qquad
    \subfigure[FV-2]{\includegraphics[width=0.3\textwidth,height=0.25\textwidth,trim={51cm 0cm 0cm 0cm},clip]{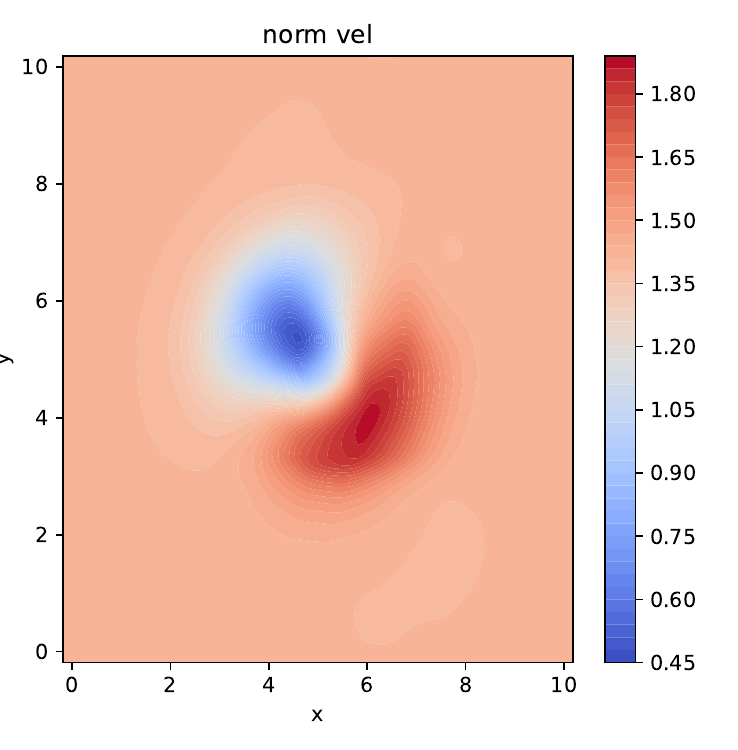}}\qquad
    \subfigure[GF]{\includegraphics[width=0.3\textwidth,height=0.25\textwidth,trim={51cm 0cm 0cm 0cm},clip]{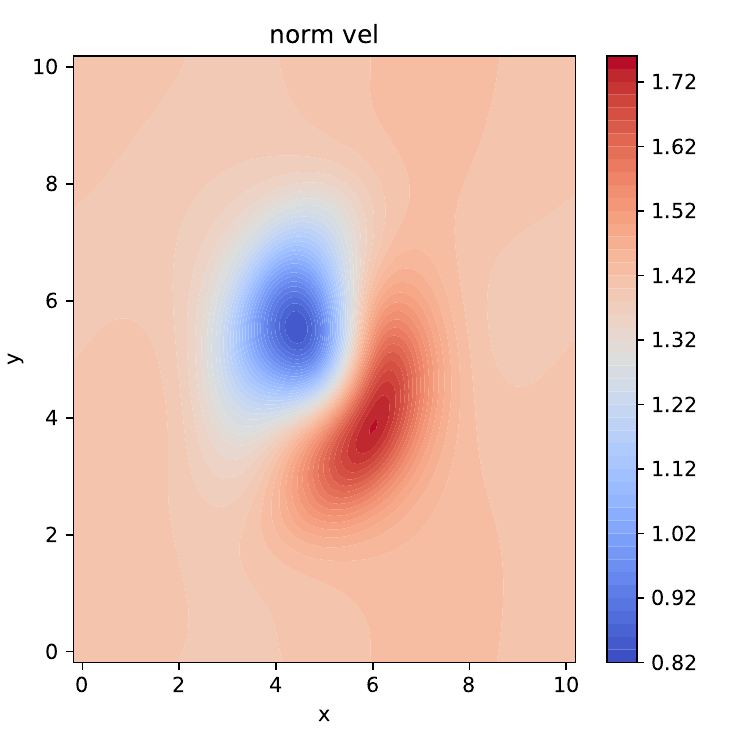}}
    \caption{Euler equations: isentropic vortex with $u_0=v_0=1$. Isocontours of the velocity norm obtained with FV-1, FV-2 and GF at $t_f=10$.}
    \label{fig:Shu_moving}
  \end{figure}

\subsubsection{\new{Isentropic vortex:  low Mach behaviour}}

\new{As recalled in the introduction, the resolution of the  long-time limit of linear acoustics is tightly connected to 
the  low Mach number limit of the Euler equations. This connection is studied  in depth e.g. in  
  \cite{jung2022steady,jung2024behavior}.  A connection between stationarity preservation
  and preservation of asymptotic  has not been rigorously  proved, however 
previous studies have shown that stationarity preserving methods are also well behaved for low Mach
  \cite{barsukow2025stationaritypreservationlowmach,barsukow2018low}. Following the last references, in this section we investigate 
  the low Mach behaviour of the new stationarity preserving formulation. To this end we compute long time 
  simulations of the same vortex for Mach numbers Ma = $10^{-2}$,  Ma = $10^{-4}$, and Ma = $10^{-6}$.
}
\new{
   The contours of the velocity norm at time $t_f=200$
  on a 40$\times$40 mesh are visualized on figure  \ref{fig:Shu_low_ma}. The figure shows clearly that the  first order  finite volume
  method without reconstruction  is unable to provide a reasonable approximation on this mesh level, giving essentially a constant state. 
  The second order method improves this, however considerably losing the circular symmetry of the vortex, with significant dissipation.
  The new method provides a remarkable approximation, with very little dissipation, and 
perfect circular symmetry despite of the coarse Cartesian mesh.}
  
\new{
  \begin{figure}
    \centering
    \subfigure[FV-1, Ma = $10^{-2}$ ]{\includegraphics[width=0.3\textwidth,height=0.25\textwidth,trim={51cm 0cm 0cm 0cm},clip]{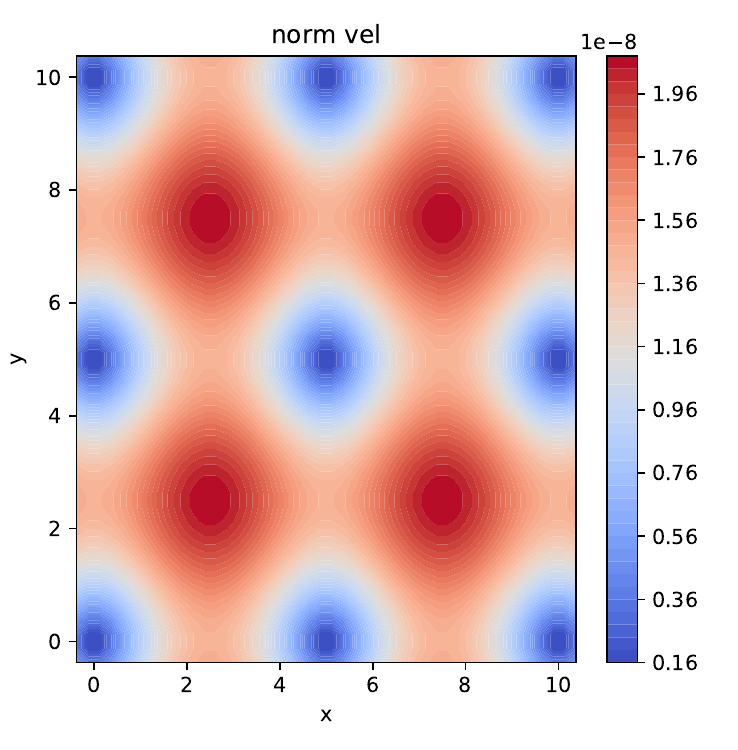}}\qquad
    \subfigure[FV-1, Ma = $10^{-4}$]{\includegraphics[width=0.3\textwidth,height=0.25\textwidth,trim={51cm 0cm 0cm 0cm},clip]{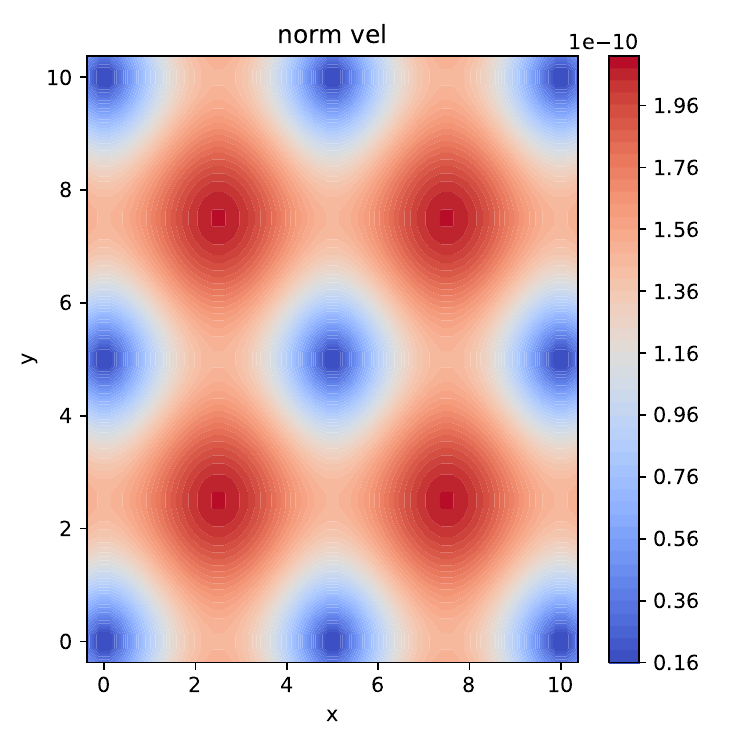}}\qquad
    \subfigure[FV-1, Ma = $10^{-6}$]{\includegraphics[width=0.3\textwidth,height=0.25\textwidth,trim={51cm 0cm 0cm 0cm},clip]{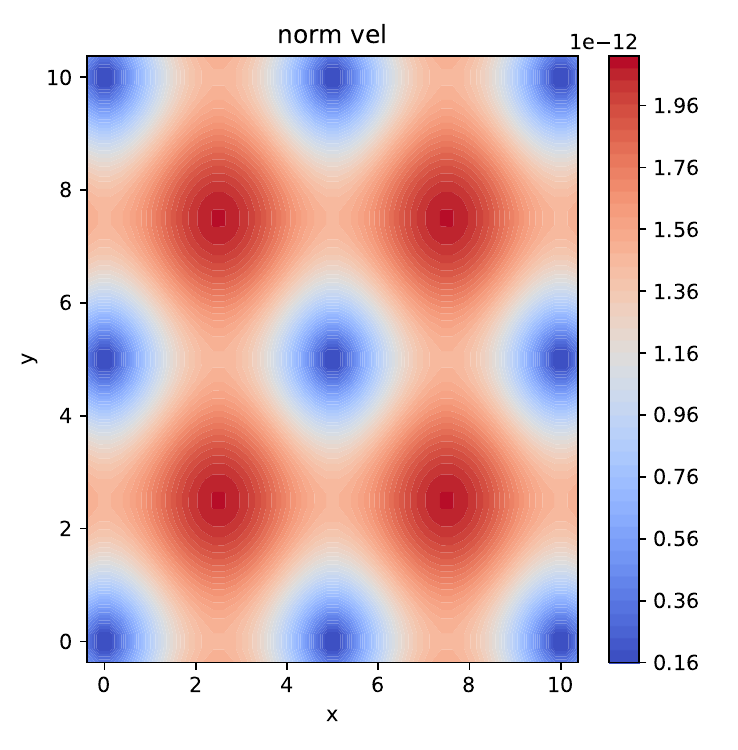}}
    
    \subfigure[FV-2, Ma = $10^{-2}$]{\includegraphics[width=0.3\textwidth,height=0.25\textwidth,trim={51.5cm 0cm 0.4cm 0cm},clip]{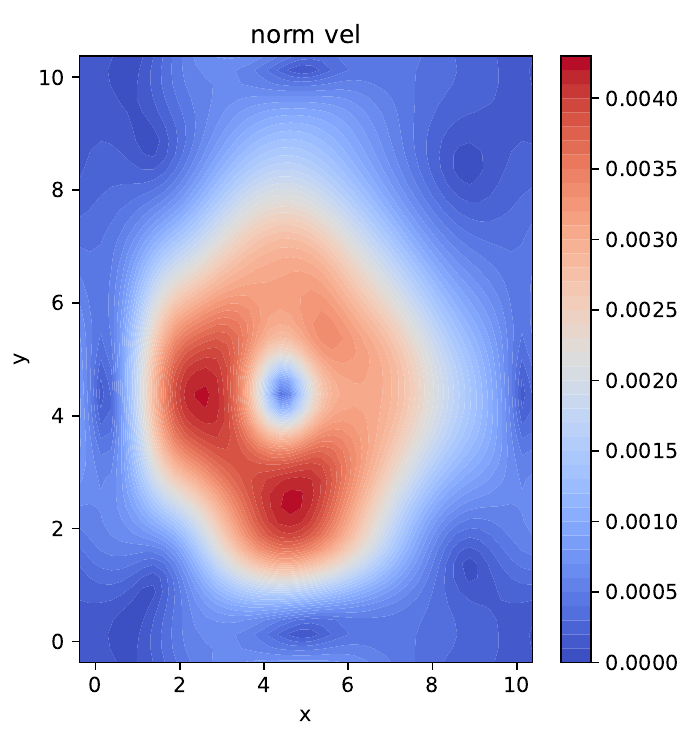}}\qquad
    \subfigure[FV-2, Ma = $10^{-4}$]{\includegraphics[width=0.3\textwidth,height=0.25\textwidth,trim={51.5cm 0cm 0.2cm 0cm},clip]{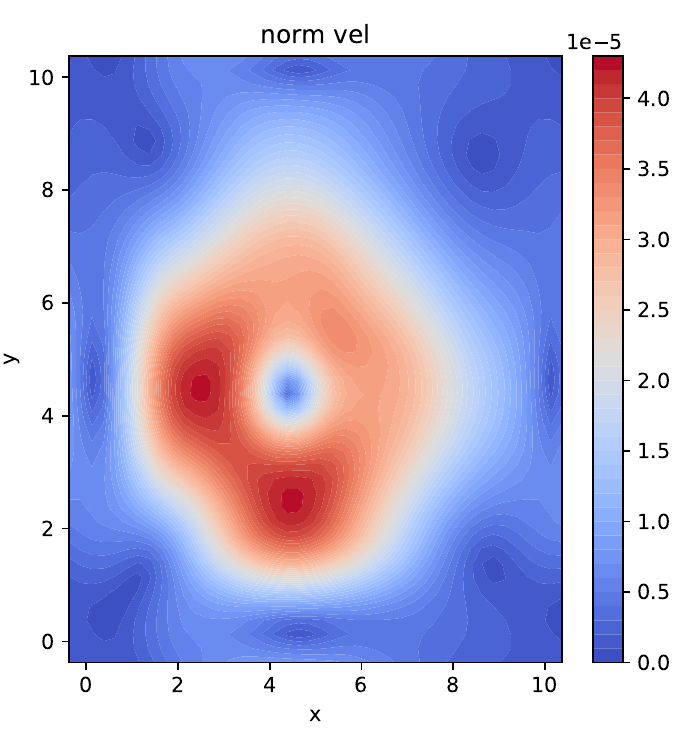}}\qquad
    \subfigure[FV-2, Ma = $10^{-6}$]{\includegraphics[width=0.3\textwidth,height=0.25\textwidth,trim={51.5cm 0cm 0.2cm 0cm},clip]{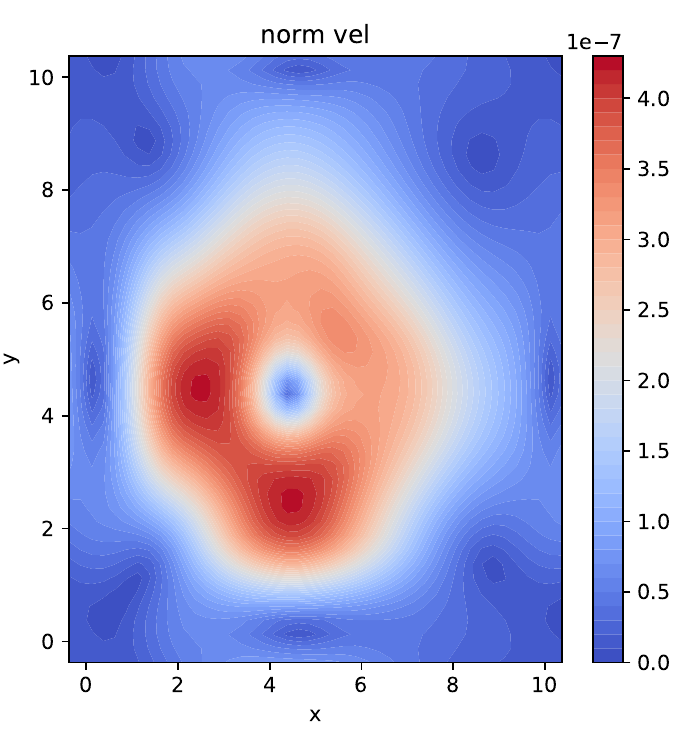}}
    
        \subfigure[GF, Ma = $10^{-2}$]{\includegraphics[width=0.3\textwidth,height=0.25\textwidth,trim={51.5cm 0cm 0.2cm 0cm},clip]{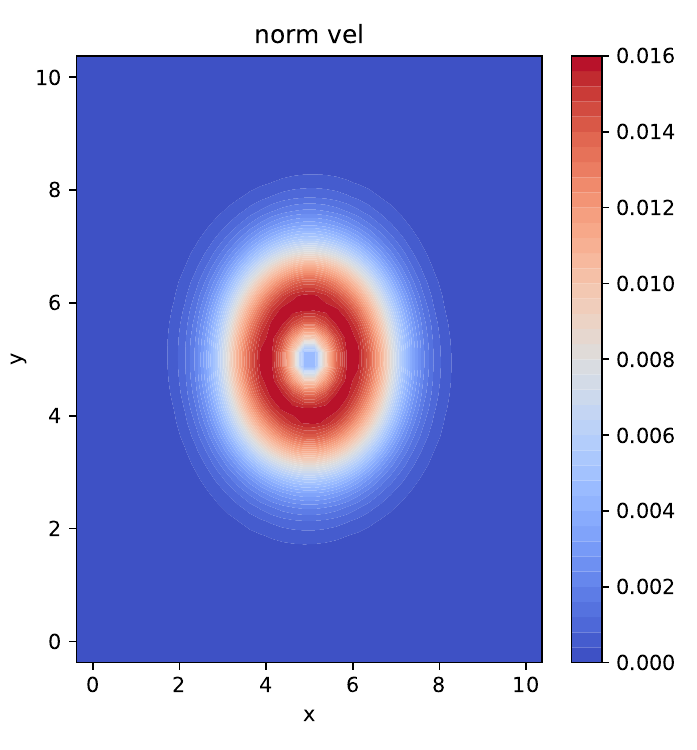}}\qquad
    \subfigure[GF, Ma = $10^{-4}$]{\includegraphics[width=0.3\textwidth,height=0.25\textwidth,trim={51.6cm 0cm 0.55cm 0cm},clip]{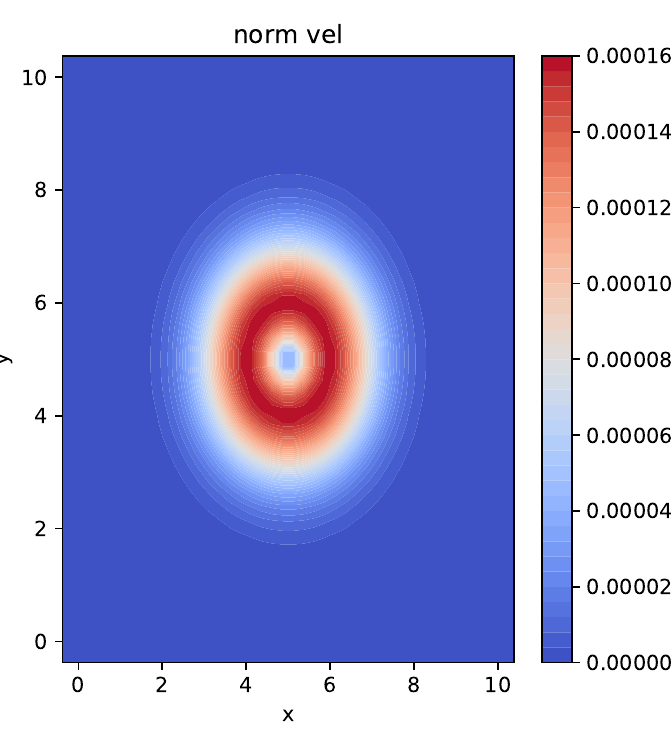}}\qquad
    \subfigure[GF, Ma = $10^{-6}$]{\includegraphics[width=0.3\textwidth,height=0.25\textwidth,trim={51.5cm 0cm 0.2cm 0cm},clip]{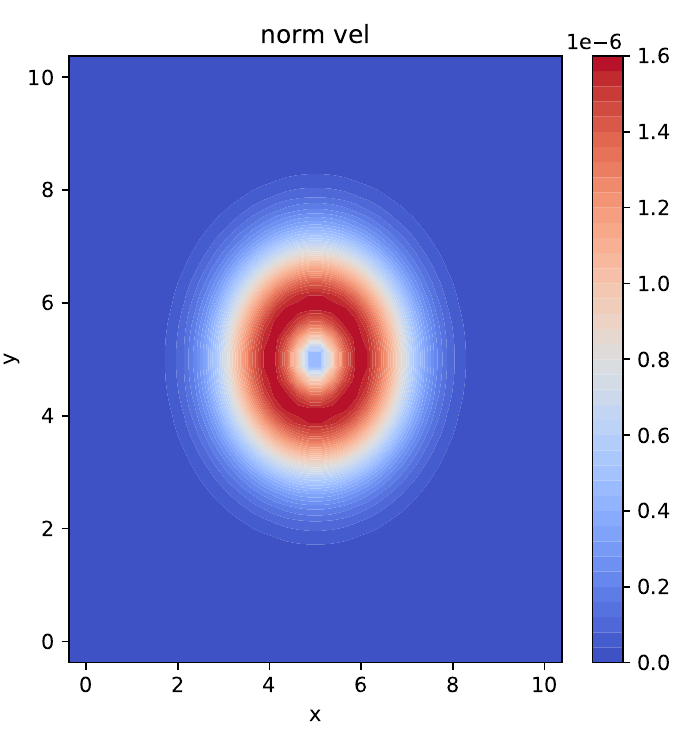}}
    
    \caption{Euler equations:   Low  Mach stationary isentropic vortex. Isocontours of the velocity norm   at $t_f=200$ obtained with  FV-1 (top row), FV-2  (middle row), and GF (bottom row).
Left:  Ma=0.01. Center: Ma = 0.0001. Right: Ma=0.000001.}
    \label{fig:Shu_low_ma}
  \end{figure}

To provide a quantitative assessment of the enhancements brought by our approach, on figure \ref{fig:Shu_low_ma1} we report the convergence of the error
of the $x$ momentum at $t_f=200$. The error is scaled by the exact maximum value (which is of the order of the Mach number itself), 
to allow a comparison of the results for the three values of the Mach number.
The results for the FV method are similar to the unfiltered ones presented in \cite{jung2022steady} for the velocity: 
the method barely converges on the coarser resolutions, and starts converging with a slope of about 1/2 only on the finest resolution (which is considerably finer
that those used in the reference). Despite of the poor  qualitative results obtained on the coarsest meshes,
the second order FV method does provide the expected slope for all Mach numbers. However, our approach provides errors which 
are systematically two orders of magnitude smaller, for all grid sizes and meshes. This supports the idea that stationarity preservation
and asymptotic preservation are tightly linked.

\begin{figure}
    \subfigure[FV-1]{\includegraphics[trim={4cm 5cm 1.5cm 0cm},clip,width=0.32\textwidth]{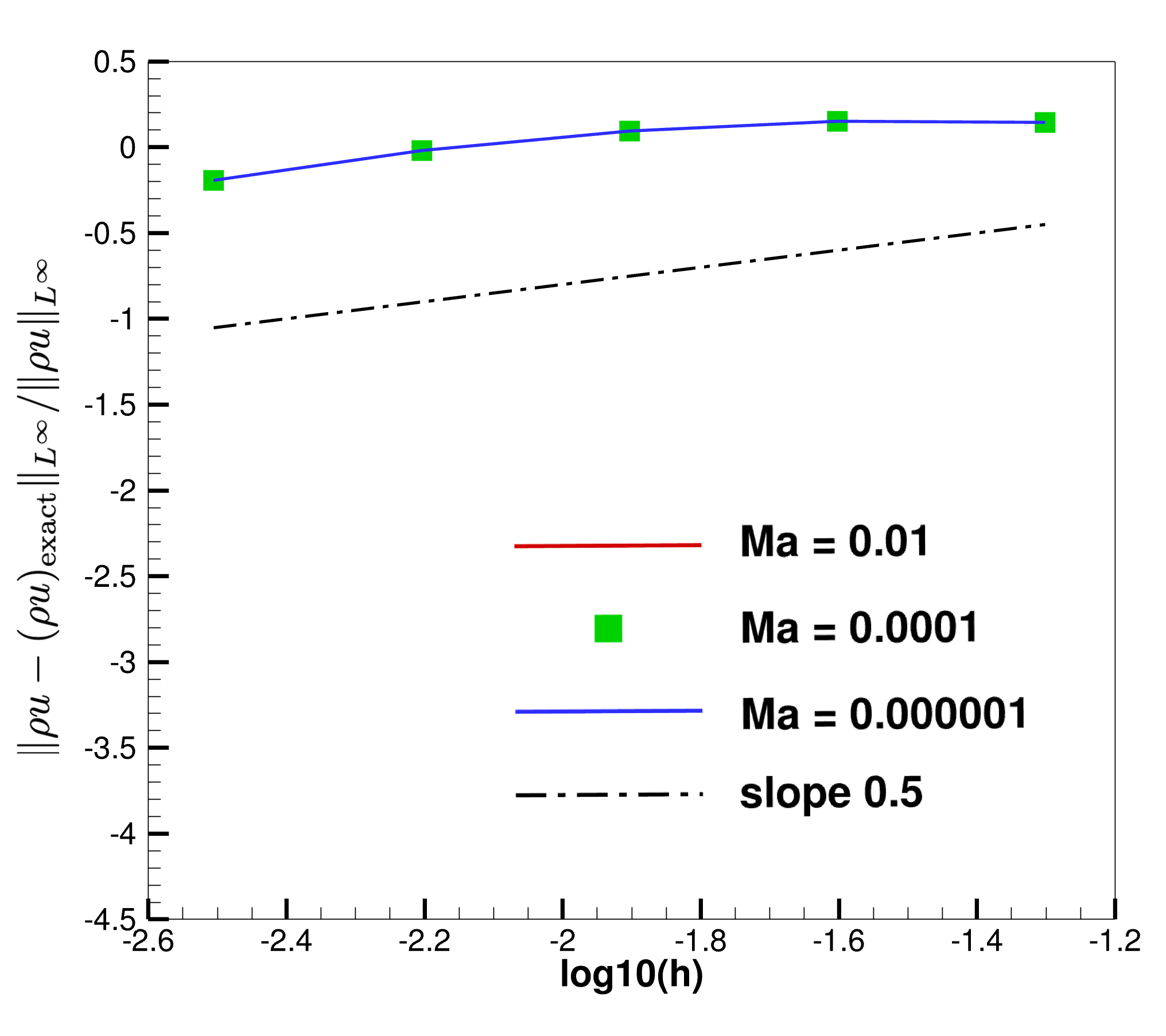}} 
    \subfigure[FV-2]{\includegraphics[trim={4cm 5cm 1.5cm 0cm},clip,width=0.32\textwidth]{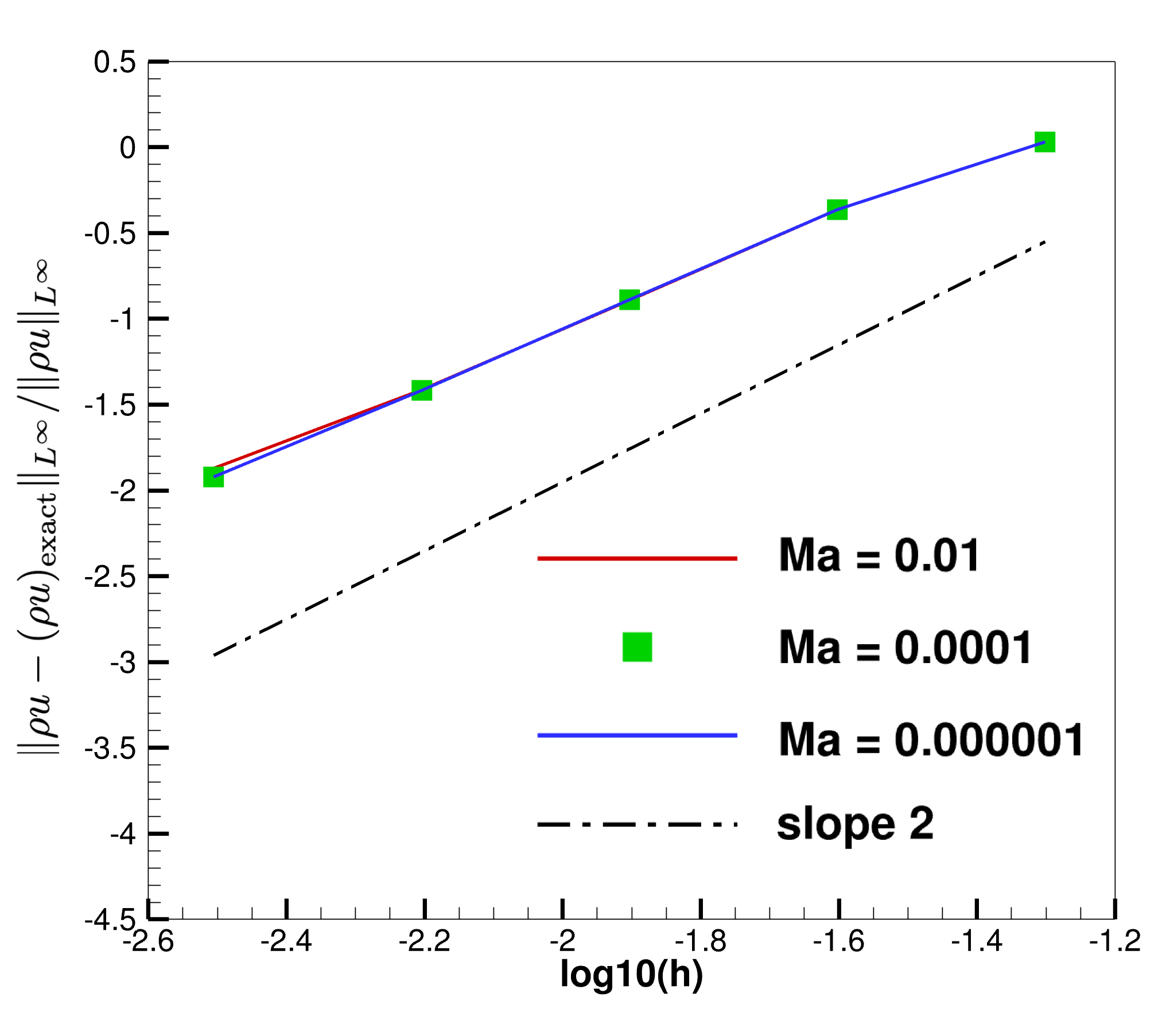}} 
    \subfigure[GF]{\includegraphics[trim={4cm 5cm 1.5cm 0cm},clip,width=0.32\textwidth]{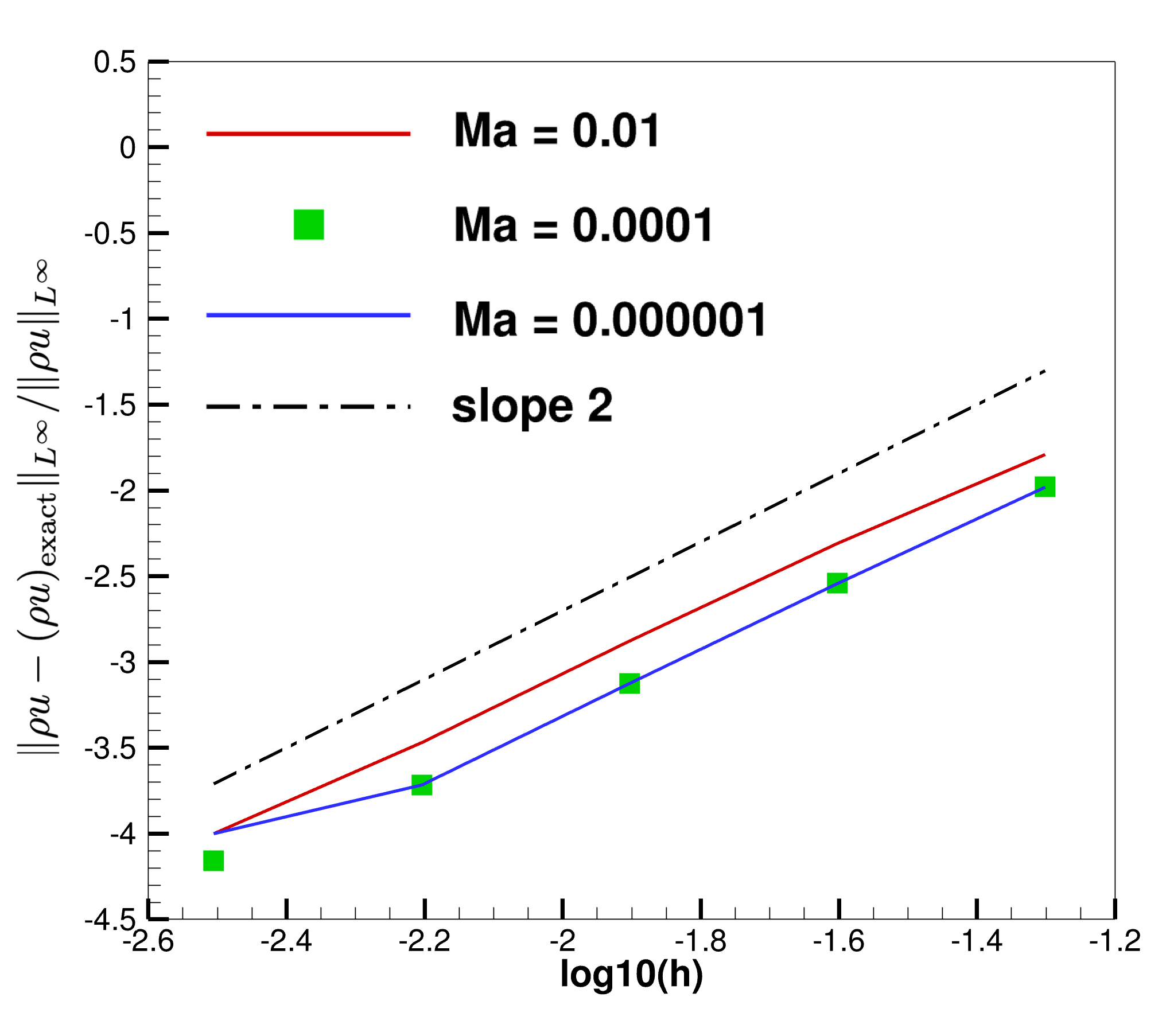}}
    
    \caption{Euler equations: Low  Mach stationary isentropic vortex. Mesh convergence  of the error on the $x$ momentum (scaled by the exact maximum value)
 at $t_f=200$ and different Mach numbers for the    FV-1 (left), FV-2  (center), and GF (right). On y-axis the $\log_{10} \left( \lVert \rho u - (\rho u)_{\text{exact}}\rVert_{L^\infty}/\lVert (\rho u)_{\text{exact}} \rVert_{L^\infty} \right)$ while on the x-axis the $\log_{10}(\Delta x)$.}
    \label{fig:Shu_low_ma1}
  \end{figure}
}

\begin{table}
  \caption{Euler equations: isentropic vortex with $u_0=v_0=1$ ($t_f=10$). $L_2$ error and order of accuracy $\tilde{n}$ for FV-1, FV-2 and GF methods.}
  \label{tab:SHUconv_moving}
  \footnotesize
  \centering
  \begin{tabular}{ccccccccc}
          \hline\hline
          &\multicolumn{2}{c}{$\rho $} &\multicolumn{2}{c}{$\rho u$}   &\multicolumn{2}{c}{$\rho v$} &\multicolumn{2}{c}{$\rho E$}\\[0.5mm]
          \cline{2-9}
          $N_x,N_y$ & $L_2$        & $\tilde{n}$ & $L_2$        & $\tilde{n}$ & $L_2$      & $\tilde{n}$ & $L_2$      & $\tilde{n}$ \\[0.5mm]\hline
          &\multicolumn{8}{c}{FV-1}\\[0.5mm]
          20  &  6.50E-01 &  --   &  1.54E+00  &  --   &  1.54E+00  &  --   &  3.12E+00  &  --   \\
          40  &  6.21E-01 &  0.06 &  1.46E+00  &  0.07 &  1.46E+00  &  0.07 &  3.01E+00  &  0.05 \\
          80  &  5.82E-01 &  0.09 &  1.31E+00  &  0.15 &  1.31E+00  &  0.15 &  2.83E+00  &  0.09 \\
          160 &  5.13E-01 &  0.18 &  1.06E+00  &  0.30 &  1.07E+00  &  0.29 &  2.48E+00  &  0.19 \\
          320 &  4.01E-01 &  0.35 &  7.58E-01  &  0.49 &  7.63E-01  &  0.49 &  1.92E+00  &  0.36 \\
          &\multicolumn{8}{c}{FV-2}\\[0.5mm]
          20  & 5.29E-01  &  --   &  1.07E+00  &  --   & 1.12E+00 &   --  &  2.48E+00  &  --   \\
          40  & 2.45E-01  &  1.10 &  4.42E-01  &  1.28 & 4.84E-01 &  1.21 &  1.17E+00  &  1.08 \\
          80  & 6.55E-02  &  1.90 &  1.26E-01  &  1.80 & 1.37E-01 &  1.81 &  2.82E-01  &  2.04 \\
          160 & 1.85E-02  &  1.82 &  3.22E-02  &  1.97 & 3.42E-02 &  2.00 &  6.03E-02  &  2.22 \\
          320 & 3.38E-03  &  2.45 &  7.49E-03  &  2.10 & 8.12E-03 &  2.07 &  1.28E-02  &  2.23 \\
          &\multicolumn{8}{c}{GF}\\[0.5mm]
          20  & 5.46E-01  &  --   &  1.30E+00  &  --   &  1.13E+00  &  --   &  2.58E+00  &  --   \\
          40  & 4.44E-01  & 0.30  &  1.02E+00  &  0.34 &  7.71E-01  &  0.55 &  2.10E+00  &  0.29   \\
          80  & 3.18E-01  & 0.48  &  6.96E-01  &  0.56 &  4.92E-01  &  0.64 &  1.53E+00  &  0.45   \\
          160 & 1.99E-01  & 0.67  &  4.12E-01  &  0.75 &  2.96E-01  &  0.73 &  9.73E-01  &  0.65   \\
          320 & 1.12E-01  & 0.82  &  2.24E-01  &  0.87 &  1.68E-01  &  0.81 &  5.55E-01  &  0.80   \\
          \hline\hline\\[1pt]
  \end{tabular}
  \end{table}

\subsubsection{Perturbation of the isentropic vortex}

In this section, we present a test case for the Euler equations that consists in a perturbation
of the isentropic vortex presented in the previous section.
The initial conditions for the three schemes FV-1, FV-2 and GF are taken as the final results
$q_{\text{eq}}$ of the respective simulations run until final time $t_f=50$ with a $80\times 80$ mesh.

Then, we add to the initial conditions a density perturbation $\delta\rho$ centered in $(4,4)$ 
of the form:
$$ \delta\rho = A e^{-\frac{(x-4)^2 + (y-4)^2}{\sigma^2}}$$
where $A = 5 \cdot 10^{-3}$ and $\sigma = 0.8$.
The simulation is run until a final time $t = 2$ to compare the effect of the numerical viscosity
on the evolution of the perturbation.

In figure \ref{fig:perturbationSHU}, we show the density contour plot at the final time for the three methods.
The GF method is able to capture the perturbation sharply, 
while the FV-1 and FV-2 methods have discretization errors too large to capture it properly.
By looking at the isocontours scales, it is clear that the perturbation is completely dissipated for 
the FV-1 method, while for the FV-2 method the perturbation is still visible but with a much larger 
error compared to the expected solution.

\begin{figure}
  \centering
  \subfigure[FV-1]{\includegraphics[width=0.3\textwidth,height=0.25\textwidth,height=0.25\textwidth,trim={0cm 0cm 51cm 0cm},clip]{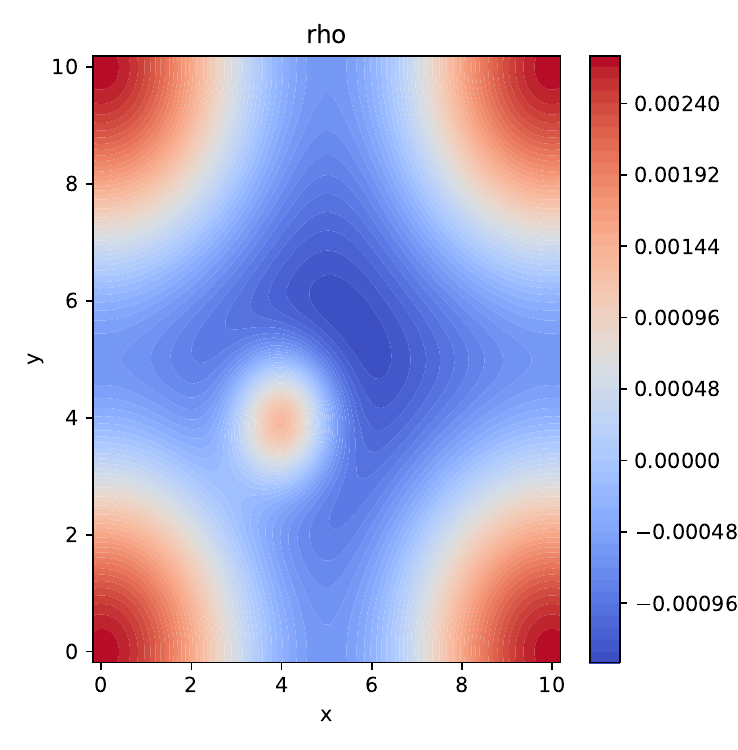}}\qquad
  \subfigure[FV-2]{\includegraphics[width=0.3\textwidth,height=0.25\textwidth,trim={0cm 0cm 51cm 0cm},clip]{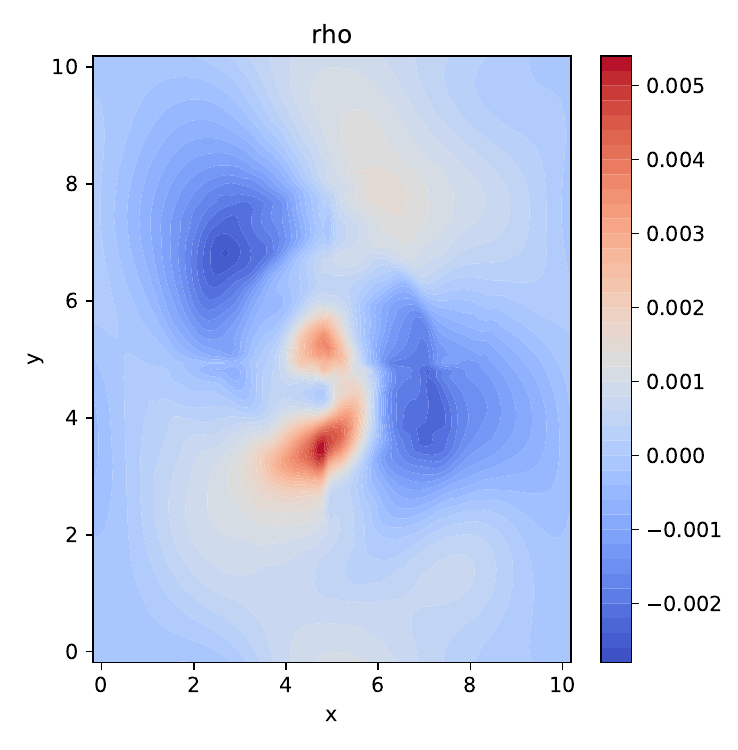}}\qquad
  \subfigure[GF]  {\includegraphics[width=0.3\textwidth,height=0.25\textwidth,trim={0cm 0cm 51cm 0cm},clip]{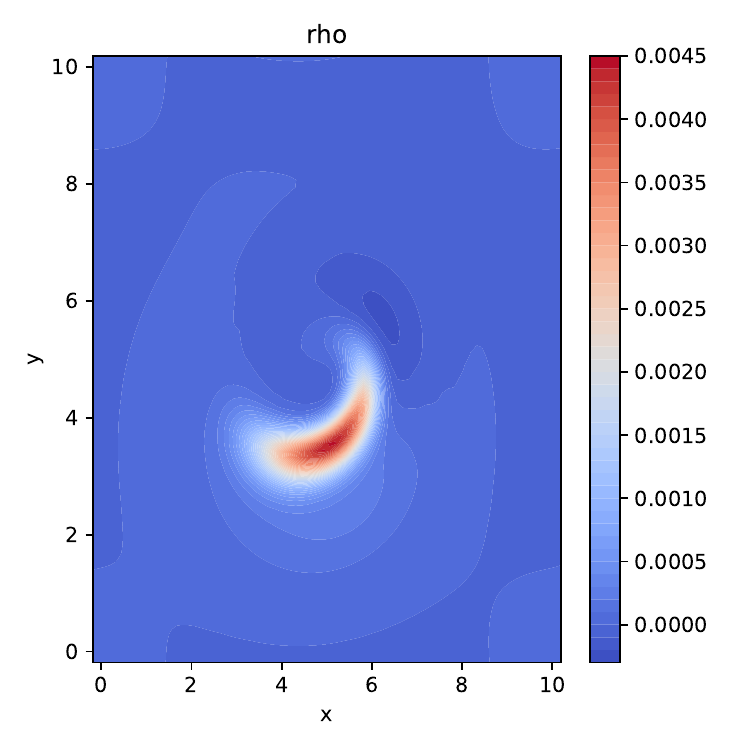}}
  \caption{Euler equations: perturbation of the isentropic vortex. Isocontours of the $\rho-\rho_{\text{eq}}$ norm obtained with FV-1, FV-2 and GF at final time $t_f=2$ with a $80\times 80$ mesh.}
  \label{fig:perturbationSHU}
\end{figure}

\subsubsection{Sod's circular problem}
Here, we test the robustness of the global flux method on the Euler equations for the Sod circular problem.
\new{This case is fully non-linear, and with Mach number of order one, allowing to show that the new method has ``all Mach'' capabilities.}
The problem is a two-dimensional extension of the classical shock tube problem.
The simulation is performed on a domain $[-1,1]\times[-1,1]$ and the initial condition is 
given by 
\begin{align*}
  Q(\mathbf{x},0) = 
  \begin{cases} 
    {Q}_i \quad\text{if}\quad r \leq R, \\
    {Q}_e \quad\text{if}\quad r > R, 
  \end{cases}
\end{align*}
with $r = \sqrt{x^2+y^2}$. The circle of radius $R=0.5$ is centered in the origin and separates
the inner state ${Q}_i$ from the outer state ${Q}_e$, where ${Q}=(\rho,u,v,p)$. The initial conditions
are given by ${Q}_i=(1,0,0,1)$ and ${Q}_e=(0.125,0,0,0.1)$. For a reference solution of this problem, we refer to \cite{boscheri2013arbitrary}.

\begin{figure}
	\centering
	\includegraphics[width=0.055\textwidth]{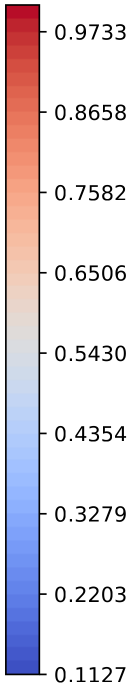}
	\subfigure[FV-1 density]{\includegraphics[width=0.29\textwidth]{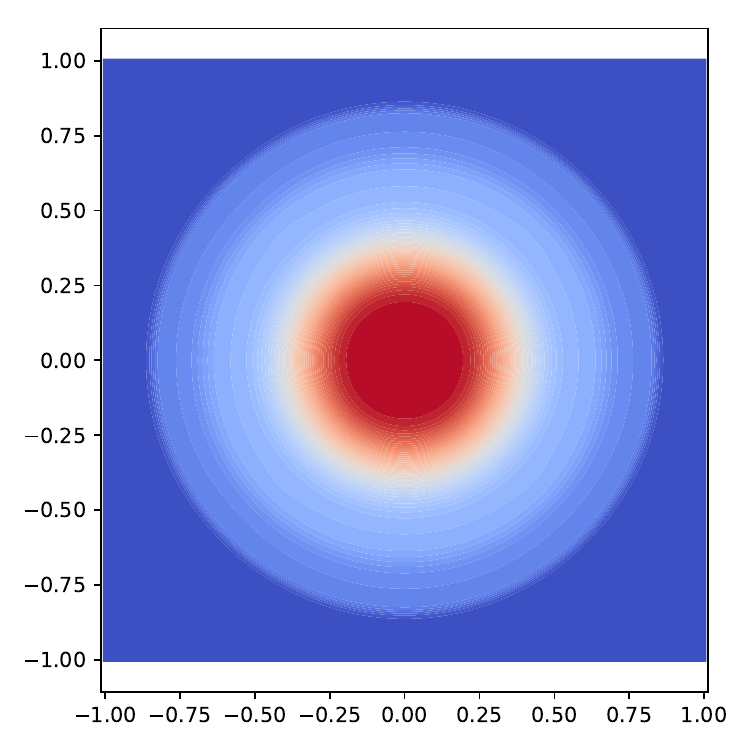}}
	\subfigure[FV-2 density]{\includegraphics[width=0.29\textwidth]{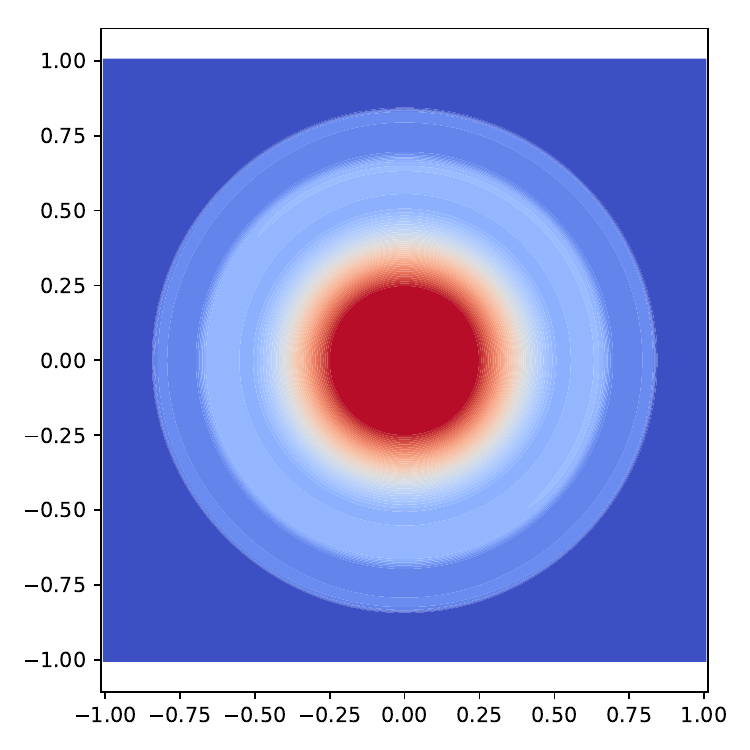}}
	\subfigure[GF density]{\includegraphics[width=0.29\textwidth]{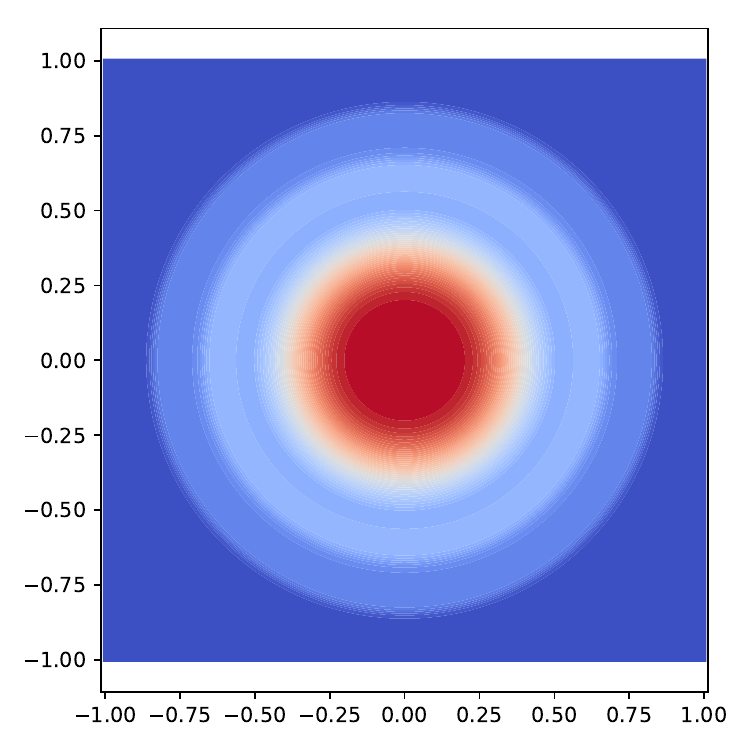}}
	\\
	\subfigure[Slice of $\rho$ at $x=0$]{
		\begin{tikzpicture}
			\begin{axis}[
				grid=major,
				xlabel={$y$},
				ylabel={$\rho$},
				xmin = -0.05,
				xlabel shift = 1 pt,
				ylabel shift = 1 pt,
				legend pos= north east,
				legend style={nodes={scale=0.6, transform shape}},
				tick label style={font=\scriptsize},
				width=.3\textwidth
				]
				\addplot[thick,magenta]   table [x=y, y=rho]{final_sol_GF_Sod_400x400_slice.dat};
				\addlegendentry{GF}
				\addplot[thick,blue]   table [x=y, y=rho]{final_sol_Sod_400x400_order1_slice.dat};
				\addlegendentry{FV-1}
				\addplot[thick,gray]   table [x=y, y=rho]{final_sol_Sod_400x400_order2_slice.dat};
				\addlegendentry{FV-2}
			\end{axis}
		\end{tikzpicture}
	}
	\subfigure[Slice of $v$ at $x=0$]{
		\begin{tikzpicture}
			\begin{axis}[
				grid=major,
				xlabel={$y$},
				ylabel={$v$},
				xmin = -0.05,
				xlabel shift = 1 pt,
				ylabel shift = 1 pt,
				legend pos= north west,
				legend style={nodes={scale=0.6, transform shape}},
				tick label style={font=\scriptsize},
				width=.3\textwidth
				]
				\addplot[thick,magenta]   table [x=y, y expr=\thisrow{rhov}/\thisrow{rho}]{final_sol_GF_Sod_400x400_slice.dat};
				\addlegendentry{GF}
				\addplot[thick,blue]   table [x=y,  y expr=\thisrow{rhov}/\thisrow{rho}]{final_sol_Sod_400x400_order1_slice.dat};
				\addlegendentry{FV-1}
				\addplot[thick,gray]   table [x=y,  y expr=\thisrow{rhov}/\thisrow{rho}]{final_sol_Sod_400x400_order2_slice.dat};
				\addlegendentry{FV-2}
			\end{axis}
		\end{tikzpicture}
	}
	\subfigure[Slice of $p$ at $x=0$]{
	\begin{tikzpicture}
		\begin{axis}[
			grid=major,
			xlabel={$y$},
			ylabel={$p$},
			xmin = -0.05,
			xlabel shift = 1 pt,
			ylabel shift = 1 pt,
			legend pos= north east,
			legend style={nodes={scale=0.6, transform shape}},
			tick label style={font=\scriptsize},
			width=.3\textwidth
			]
			\addplot[thick,magenta]   table [x=y, y expr=10./4.*(\thisrow{rhoE}-\thisrow{rhou}*\thisrow{rhou}/\thisrow{rho}/2-\thisrow{rhov}*\thisrow{rhov}/\thisrow{rho}/2)]{final_sol_GF_Sod_400x400_slice.dat};
			\addlegendentry{GF}
			\addplot[thick,blue]   table [x=y,  y expr=10./4.*(\thisrow{rhoE}-\thisrow{rhou}*\thisrow{rhou}/\thisrow{rho}/2-\thisrow{rhov}*\thisrow{rhov}/\thisrow{rho}/2)]{final_sol_Sod_400x400_order1_slice.dat};
			\addlegendentry{FV-1}
			\addplot[thick,gray]   table [x=y,  y expr=10./4.*(\thisrow{rhoE}-\thisrow{rhou}*\thisrow{rhou}/\thisrow{rho}/2-\thisrow{rhov}*\thisrow{rhov}/\thisrow{rho}/2)]{final_sol_Sod_400x400_order2_slice.dat};
			\addlegendentry{FV-2}
		\end{axis}
	\end{tikzpicture}
}
	\caption{Euler equations: Sod's circular problem. Numerical results obtained on a $400\times 400$ mesh with FV-1, FV-2 and GF methods run until a final time $t_f=0.2$.}
	\label{fig:Euler_Sod}
\end{figure}%
To have a smoother initial condition, the two states are connected by a smooth transition region
given by an erfc function defined as
$$ \zeta(r) = \frac12 \text{erfc}\left(\frac{r-R}{\delta}\right),$$
where $\delta=0.01$. Therefore, we can define the smoothed initial condition as
$$ {Q}(r,0) = \zeta(r) {Q}_i + (1-\zeta(r)) {Q}_e.$$
The simulation is run until final time $t_f=0.2$ before the shock waves reach the boundaries.

In figure \ref{fig:Euler_Sod}, we present the numerical results obtained on a $400\times 400$ mesh with 
the three methods FV-1, FV-2 and GF. For all situations, we show the density contour plot at the 
final time, along with a slice of the density and vertical velocity on the $x=0$ axis.
It can be noticed that, among all three simulations, the GF performs much better than the standard first order scheme and it is clearly comparable to a second order one. It is able to sharply capture the three
waves, which are smoothed out by the classical FV-1. GF also avoids oscillations at the beginning of the rarefaction, while the FV-2 shows small oscillations. On the foot of the rarefaction, the GF show sharper results, while it is a little more diffusive on the contact discontinuity with respect to the FV-2. On the shock, the GF does not oscillate, while the FV-2 shows minimal oscillations and is a little more sharply representing the discontinuity.

This test case not only allows us to show the robustness of the method to deal with unsteady shock propagation. It also provides interesting insights into its low dissipation even though the method has not been designed to have any particular properties on unsteady solutions.

\subsubsection{Kelvin-Helmholtz instability}
\new{We consider  a smooth Kelvin-Helmholtz instability for the Euler equations, }
introduced in \cite{leidi2024performance} to \new{ further confirm the ability of our approach to correctly cope with the low Mach limit.}
 numerical scheme to cope with low Mach number flow and to assess qualitatively the numerical diffusion of the method. 
 There is a large body of work available in the literature concerning the shortcomings of classical Finite Volume methods in the subsonic regime (see e.g.\ \cite{barsukow2021truly}). 
 The effect of stabilizing diffusion becomes bigger as the Mach number decreases, making it necessary to use highly resolved grids in order to capture the features of the flow. Numerical methods that are not low Mach number compliant typically also stabilize Kelvin-Helmholtz setups in an artificial way.

The simulations are performed in the domain $[0,2]\times[-1/2,1/2]$ until a final time $t_f=80$.
The initial condition is given by the following primitive variables: 
\begin{align*}
\rho &= \gamma + \mathcal H(y) \, r, \qquad
u    = M \, \mathcal H(y), \qquad
v    = \delta \, M \, \sin(2\pi x), \qquad
p    = 1,
\end{align*}
where the Mach number parameter is $M=10^{-2}$ and we use $r=10^{-3}$ and $\delta=0.1$.
The function $\mathcal H(y)$ is defined as, 
\begin{equation*}
  \mathcal H(y) =
  \begin{cases}
    -\sin\left(\frac{\pi}{\omega} \left(y+\frac14\right)\right), &\quad\text{if} \quad  -\frac14-\frac\omega2 \leq y < -\frac14+\frac\omega2, \\
    -1,&\quad\text{if} \quad  -\frac14+\frac\omega2 \leq y < \frac14-\frac\omega2, \\
    \sin\left(\frac{\pi}{\omega} \left(y-\frac14\right)\right), &\quad\text{if} \quad  \frac14-\frac\omega2 \leq y < \frac14+\frac\omega2, \\
    1 &\quad\text{else}, 
  \end{cases}
\end{equation*}
where $\omega=1/16$. Observe that the shear flow is smooth such that for short times, there exists a solution to which numerical methods converge upon mesh refinement (\cite{leidi2024performance}).
\begin{figure}
	\centering
	\subfigure[FV-1 ($64\times32$)]   {\includegraphics[width=0.29\textwidth,trim={1cm 0cm 1cm 8mm},clip]{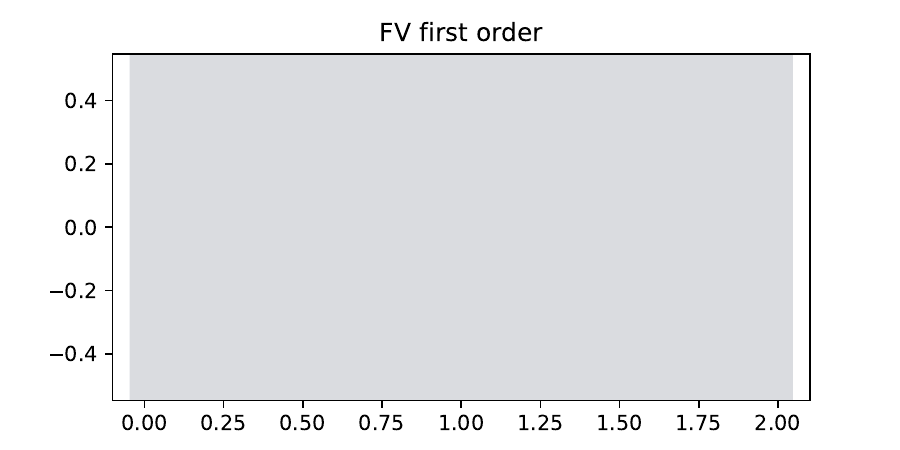}}
	\subfigure[FV-2 ($64\times32$)]   {\includegraphics[width=0.29\textwidth,trim={1cm 0cm 1cm 8mm},clip]{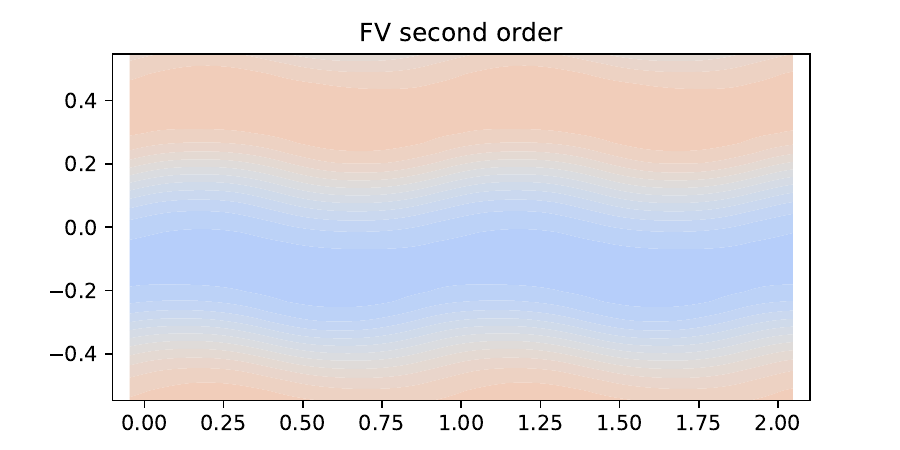}}
	\subfigure[GF ($64\times32$)]   {\includegraphics[width=0.29\textwidth,trim={1cm 0cm 1cm 8mm},clip]{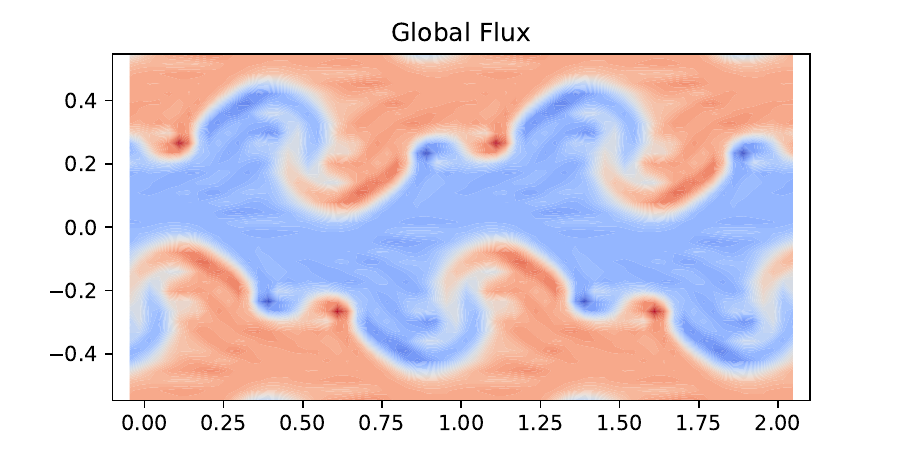}}
	\includegraphics[height=0.15\textwidth]{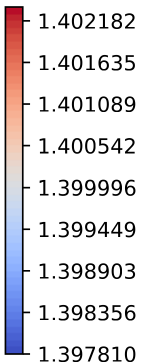}
	\\
		\subfigure[FV-1 ($128\times64$)]   {\includegraphics[width=0.29\textwidth,trim={1cm 0cm 1cm 8mm},clip]{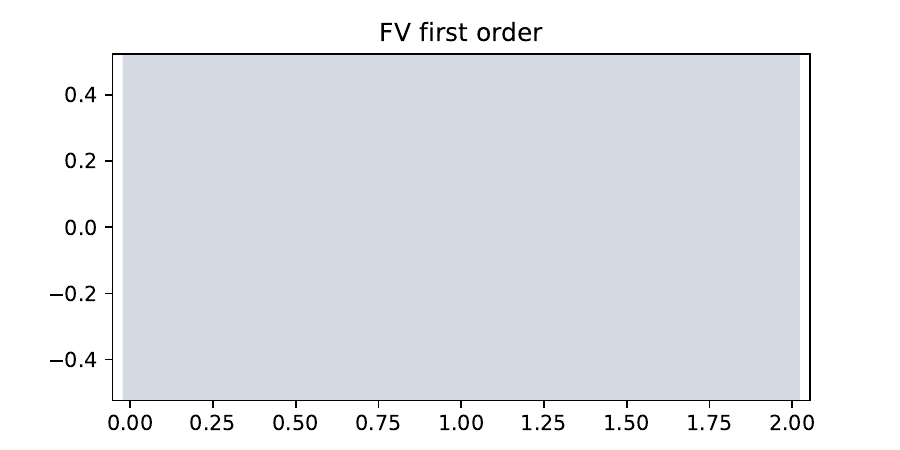}}
	\subfigure[FV-2 ($128\times64$)]   {\includegraphics[width=0.29\textwidth,trim={1cm 0cm 1cm 8mm},clip]{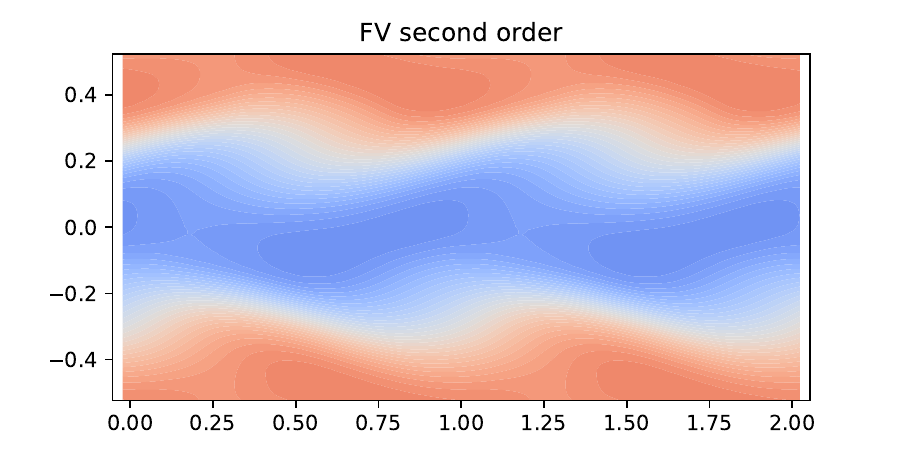}}
	\subfigure[GF ($128\times64$)]   {\includegraphics[width=0.29\textwidth,trim={1cm 0cm 1cm 8mm},clip]{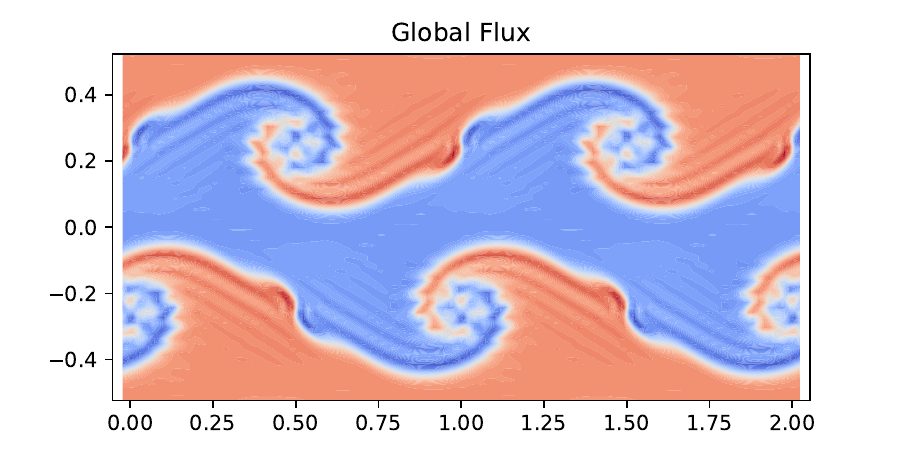}}
	\includegraphics[height=0.15\textwidth]{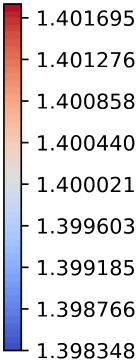}
	\\
		\subfigure[FV-1 ($256\times128$)]   {\includegraphics[width=0.29\textwidth,trim={1cm 0cm 1cm 8mm},clip]{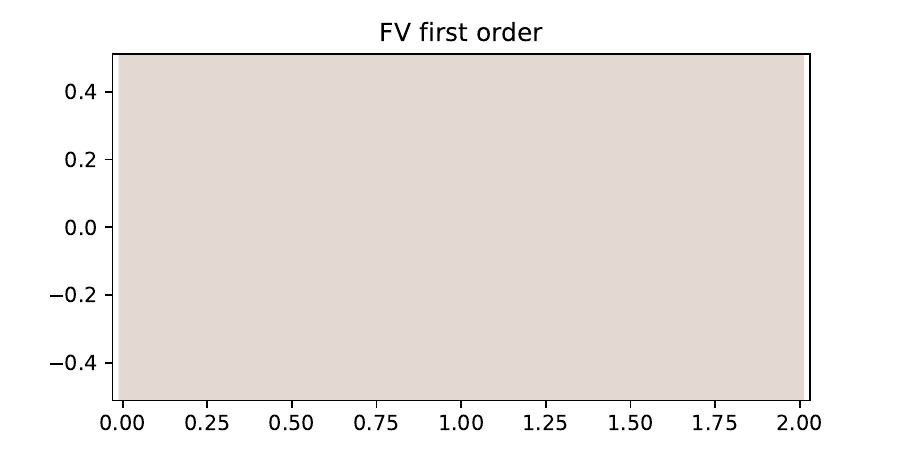}}
	\subfigure[FV-2 ($256\times128$)]   {\includegraphics[width=0.29\textwidth,trim={1cm 0cm 1cm 8mm},clip]{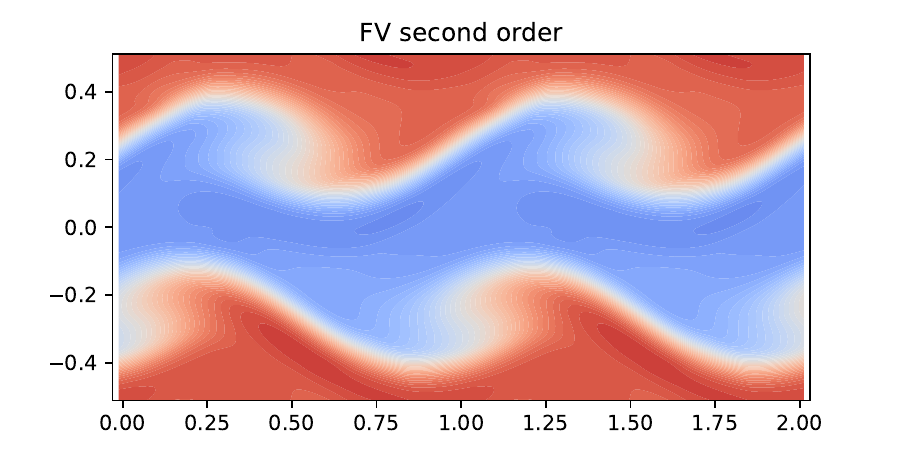}}
	\subfigure[GF ($256\times128$)]   {\includegraphics[width=0.29\textwidth,trim={1cm 0cm 1cm 8mm},clip]{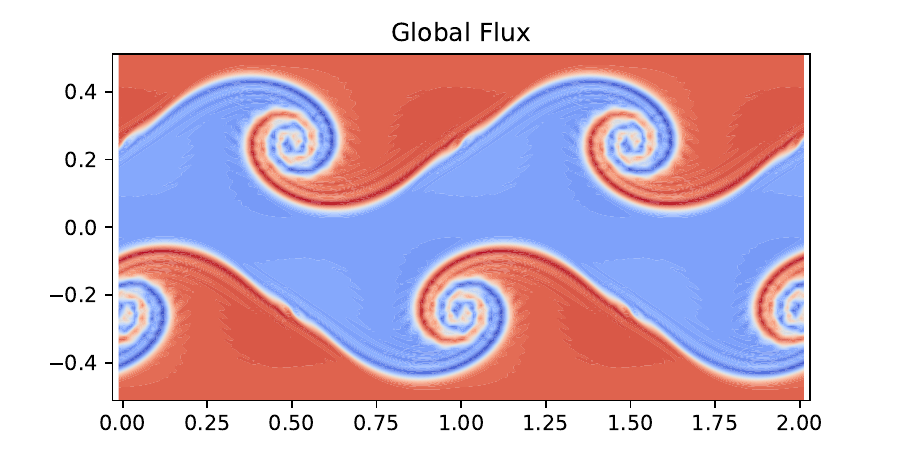}}
	\includegraphics[height=0.15\textwidth]{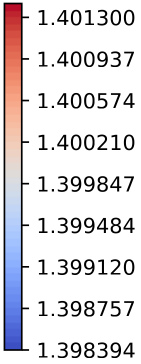}
	\\
		\subfigure[FV-1 ($512\times 256$)]   {\includegraphics[width=0.29\textwidth,trim={1cm 0cm 1cm 8mm},clip]{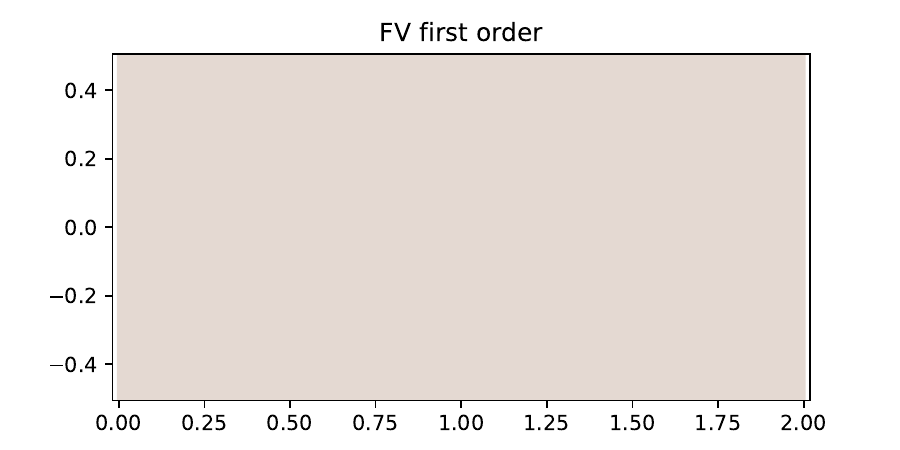}}
	\subfigure[FV-2 ($512\times 256$)]   {\includegraphics[width=0.29\textwidth,trim={1cm 0cm 1cm 8mm},clip]{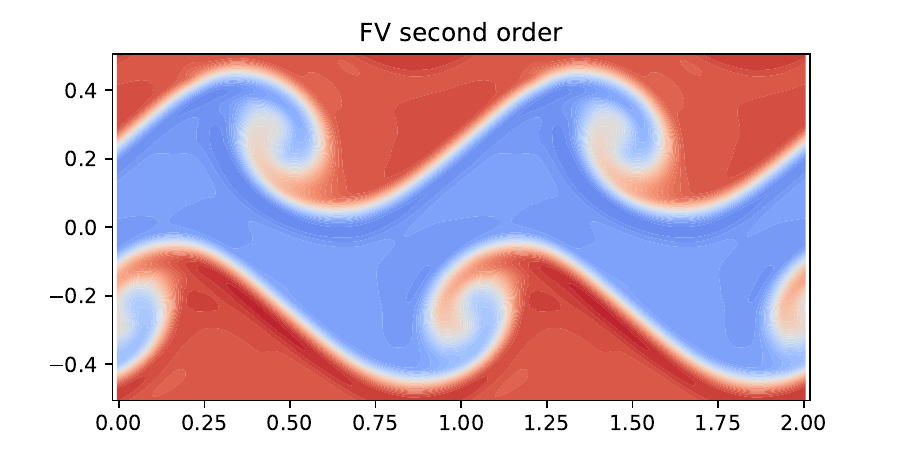}}
	\subfigure[GF ($512\times 256$)]   {\includegraphics[width=0.29\textwidth,trim={1cm 0cm 1cm 8mm},clip]{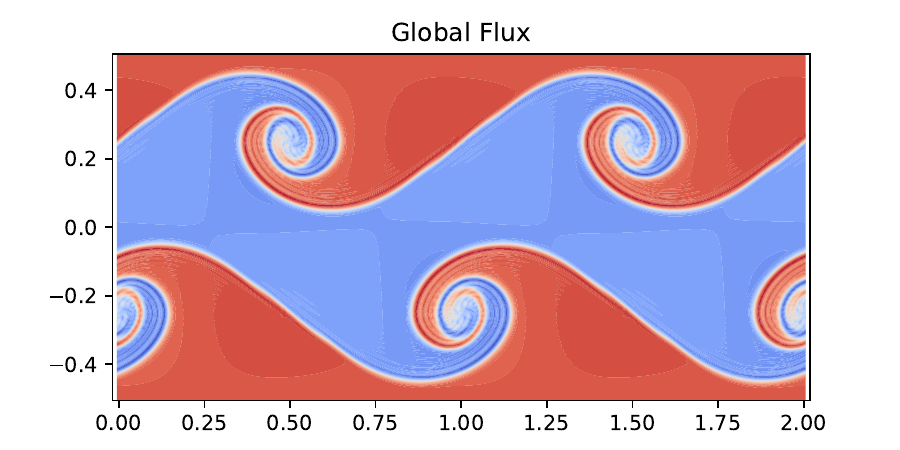}}
	\includegraphics[height=0.15\textwidth]{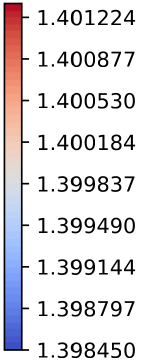}
	\caption{Euler equations: Kelvin-Helmholtz instability. Density isocontours are presented for a set of nested meshes to compare FV-1 (top), FV-2 (middle) and GF (bottom).}
	\label{fig:Euler_KH}
\end{figure}

In figure \ref{fig:Euler_KH}, we present the numerical results obtained with FV-1, FV-2 and GF for the
Kelvin-Helmholtz instability arising from the aforementioned initial conditions. The simulations are performed on a set of 4 nested
grids from a $64\times 32$, the coarsest, to $512\times 256$, the finest.

The FV-1 scheme is not able to capture any of the features arising from the instability.
No vortices form, since FV-1 is not low Mach number compliant.

Much improved results are obtained using the FV-2 with a linear reconstruction of the conservative variables. Here, the higher order of accuracy helps to overcome excessive diffusion at this Mach number and for this simulation time.
However, the structures still appear diffused and would need even more resolution for the vortex details to be captured.

Very differently from these methods, the GF method is able to capture all details of the flow very accurately. 
Already on the coarsest mesh, the fluid structures start to appear and develop. Here, some spurious vortices are visible, which are a known artefact of virtually any numerical method (see e.g. \cite{brown95}).
When increasing the resolution, the fluid features converge to 
the expected solution found in other references \cite{leidi2024performance}. Comparison to the results obtained with low Mach compliant methods studied in \cite{leidi2024performance} shows that the GF method is at least as good.

It has been suggested in \cite{barsukow2019stationarity} that numerical methods for the Euler equations whose linearization ($=$ method for linear acoustics) is stationarity preserving, are low Mach number compliant. A nonlinear stationarity preserving method naturally has this property, and some experimental examples of this behavior can also be found in \cite{barsukow2018low}. Thus, even though we set out to improve the performance of the numerical method at stationary state, here we observe that this property is beneficial even for solutions far away from it.

%
\subsection{Shallow water system}

\subsubsection{Potential flow}

The first test case implemented for the shallow water equations is an equilibrium (see \cite{ricchiuto2009stabilized})
characterized by a known exact solution, for which it is possible to perform a convergence analysis. 
The initial condition is a potential flow defined on the square $[-1,1]\times[-1,1]$ with Dirichlet boundary conditions, which is given by
\begin{align*}
  h(x,y) &= (x-x_0)(y-y_0) + C,\\
  u(x,y) &= (x-x_0),\\
  v(x,y) &=-(y-y_0) 
\end{align*}
where $C=3/2$ and $(x_0,y_0)=(0,0)$.
The 2D equilibrium is achieved thanks to a special bathymetry given by
\begin{align*}
b(x,y) = \frac1g \left(30-\frac{x^2+y^2}{2}\right) -xy -C.
\end{align*}

The solution of this potential flow is shown in figure \ref{fig:SW_potentialflow}, and the convergence rates
computed at final time $t_f=1$ are presented in table \ref{tab:SWpotentialconv_T1}, 
demonstrating the improvement brought about by the global flux formulation. 
Again, since the setup is stationary, superconvergence is observed.
Moreover, the new method is even able to outperform FV-2.

\begin{figure}
  \center
  \includegraphics[width=0.9\textwidth,height=0.25\textwidth,trim={0cm 0cm 11.5cm 0cm},clip]{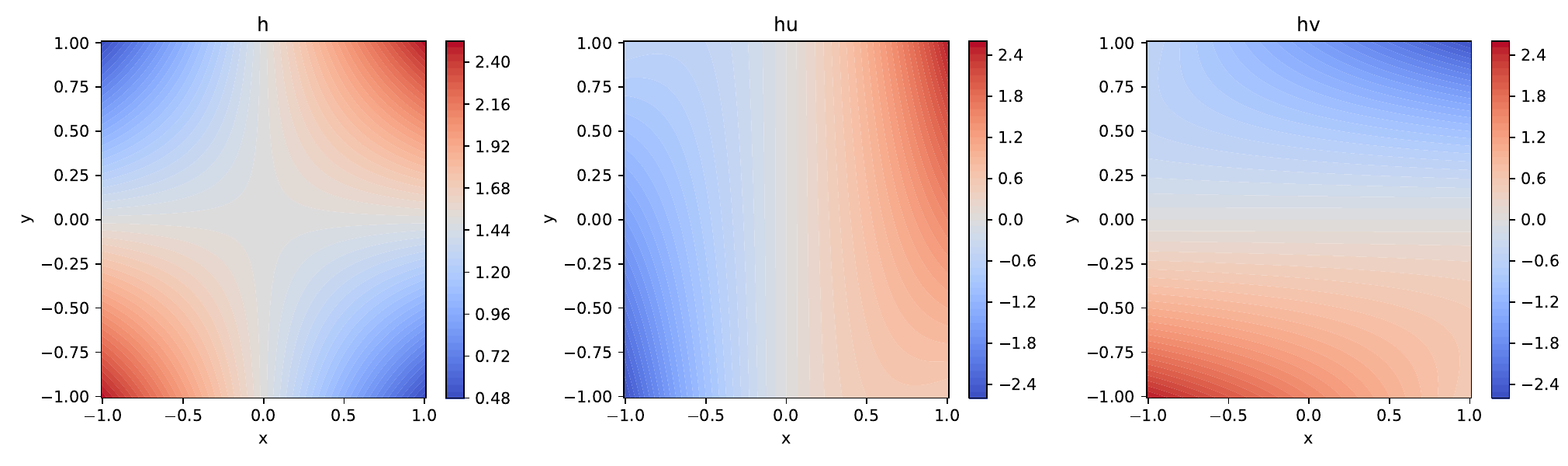}
  \caption{Shallow water system: potential flow. Reference solution of the conservative variables.}
  \label{fig:SW_potentialflow}
\end{figure}

\begin{table}
  \caption{Shallow water system: potential flow ($t_f=1$). $L_2$ error and order of accuracy $\tilde{n}$ for FV-1, FV-2 and GF. }\label{tab:SWpotentialconv_T1}
  \footnotesize
  \centering
  \begin{tabular}{ccccccc}
          \hline\hline
          &\multicolumn{2}{c}{$h$} &\multicolumn{2}{c}{$hu$}   &\multicolumn{2}{c}{$hv$}\\[0.5mm]
          \cline{2-7}
          $N_x,N_y$ & $L_2$        & $\tilde{n}$ & $L_2$        & $\tilde{n}$ & $L_2$      & $\tilde{n}$ \\[0.5mm]\hline
          &\multicolumn{6}{c}{FV-1}\\[0.5mm]
          20  &  1.54E-02  &  --  & 1.57E-01   &  --  & 1.71E-01 &  --    \\
          40  &  8.25E-03  & 0.89 & 1.08E-01   & 0.53 & 1.11E-01 & 0.62   \\
          80  &  4.30E-03  & 0.94 & 6.60E-02   & 0.71 & 6.43E-02 & 0.78   \\
          160 &  2.18E-03  & 0.97 & 3.68E-02   & 0.84 & 3.47E-02 & 0.88   \\
          320 &  1.10E-03  & 0.99 & 1.95E-02   & 0.91 & 1.80E-02 & 0.94   \\
          &\multicolumn{6}{c}{FV-2}\\[0.5mm]
          20  &  2.49E-04  &  --  & 1.06E-03   &  --  & 1.47E-03 &  --   \\
          40  &  5.26E-05  & 2.24 & 2.61E-04   & 2.02 & 3.25E-04 &  2.17 \\
          80  &  1.09E-05  & 2.27 & 7.11E-05   & 1.87 & 8.17E-05 &  1.99 \\
          160 &  2.24E-06  & 2.28 & 1.86E-05   & 1.93 & 2.06E-05 &  1.98 \\
          320 &  4.81E-07  & 2.21 & 4.69E-06   & 1.98 & 5.17E-06 &  1.99 \\
          &\multicolumn{6}{c}{GF}\\[0.5mm]
          20  &  1.15E-04 &  --   &  4.29E-04   &  --   &  1.11E-03 & --  \\
          40  &  2.69E-05 & 2.09  &  1.01E-04   &  2.08 &  2.39E-04 & 2.21 \\
          80  &  6.49E-06 & 2.05  &  2.46E-05   &  2.03 &  5.50E-05 & 2.11 \\
          160 &  1.59E-06 & 2.02  &  6.08E-06   &  2.01 &  1.32E-05 & 2.05 \\
          320 &  3.95E-07 & 2.01  &  1.51E-06   &  2.00 &  3.24E-06 & 2.02 \\
          \hline\hline\\[1pt]
  \end{tabular}
  \end{table}

\subsubsection{Lake at rest} 

In this section, we test the well-balanced property, proven in section ~\ref{sec:sourceSW}, of the global flux method 
for lake at rest solutions of the shallow water system.
The problem is set in a rectangular domain $[0,1]\times[0,1]$ with periodic boundary conditions.
The initial and exact solution is given by
$$ h(x,y) = 1 - b(x,y), \qquad u(x,y)=v(x,y)\equiv 0,$$
where the bathymetry is defined as  
$$ b(x,y) = 0.1\sin(2\pi x)\cos(2\pi y). $$
In table \ref{tab:SWlakeatrest}, a convergence study is presented at final time $t_f=0.1$.
As expected, thanks to the well-balanced property of the global flux method, the GF is able to achieve machine precision errors.
The standard FV-1 and FV-2 methods show only the classical first and second order convergence slopes.
In figure \ref{fig:lar}, we present the comparison between the well-balanced GF method and the non-well-balanced FV-1 and FV-2 methods.
\begin{table}
  \caption{Shallow water system: lake at rest ($t_f=0.1$). $L_2$ error and order of accuracy $\tilde{n}$ for FV-1 and FV-2 schemes with the novel GF.}\label{tab:SWlakeatrest}
  \footnotesize
  \centering
  \begin{tabular}{ccccccc}
          \hline\hline
          &\multicolumn{2}{c}{$h$} &\multicolumn{2}{c}{$hu$}   &\multicolumn{2}{c}{$hv$}\\[0.5mm]
          \cline{2-7}
          $N_x,N_y$ & $L_2$        & $\tilde{n}$ & $L_2$        & $\tilde{n}$ & $L_2$      & $\tilde{n}$ \\[0.5mm]\hline
          &\multicolumn{6}{c}{FV-1}\\[0.5mm]
          20  & 7.13E-03  &  --  & 4.67E-02   &  --  & 3.61E-02  &  --   \\
          40  & 2.79E-03  & 1.35 & 2.42E-02   & 0.95 & 2.04E-02  & 0.82  \\
          80  & 1.20E-03  & 1.22 & 1.21E-02   & 1.00 & 1.10E-02  & 0.89  \\
          160 & 5.46E-04  & 1.12 & 5.99E-03   & 1.00 & 5.71E-03  & 0.94  \\
          320 & 2.60E-04  & 1.06 & 2.98E-03   & 1.00 & 2.91E-03  & 0.97  \\
          &\multicolumn{6}{c}{FV-2}\\[0.5mm]
          20  & 2.92E-03 &  --  &  2.17E-03  &  --  & 2.26E-03  &   --  \\
          40  & 6.76E-04 & 2.10 &  3.65E-04  & 2.57 & 3.94E-04  &  2.51 \\
          80  & 1.60E-04 & 2.07 &  7.58E-05  & 2.26 & 8.10E-05  &  2.28 \\
          160 & 3.87E-05 & 2.04 &  1.70E-05  & 2.15 & 1.78E-05  &  2.18 \\
          320 & 9.50E-06 & 2.02 &  4.02E-06  & 2.08 & 4.13E-06  &  2.11 \\
          &\multicolumn{6}{c}{GF}\\[0.5mm]
          20  &  4.50E-16 & -- & 6.24E-15  & --  & 6.36E-15  &  --  \\
          40  &  9.71E-16 & -- & 1.36E-14  & --  & 1.31E-14  &  --  \\
          80  &  1.58E-15 & -- & 3.03E-14  & --  & 3.29E-14  &  --  \\
          160 &  3.38E-15 & -- & 8.62E-14  & --  & 8.67E-14  &  --  \\
          320 &  6.47E-15 & -- & 2.28E-13  & --  & 2.29E-13  &  --  \\
          \hline\hline\\[1pt]
  \end{tabular}
  \end{table}

\begin{figure}
  \center
  \subfigure[FV-1]{\includegraphics[width=0.9\textwidth,height=0.25\textwidth,trim={0cm 0cm 13cm 0cm},clip]{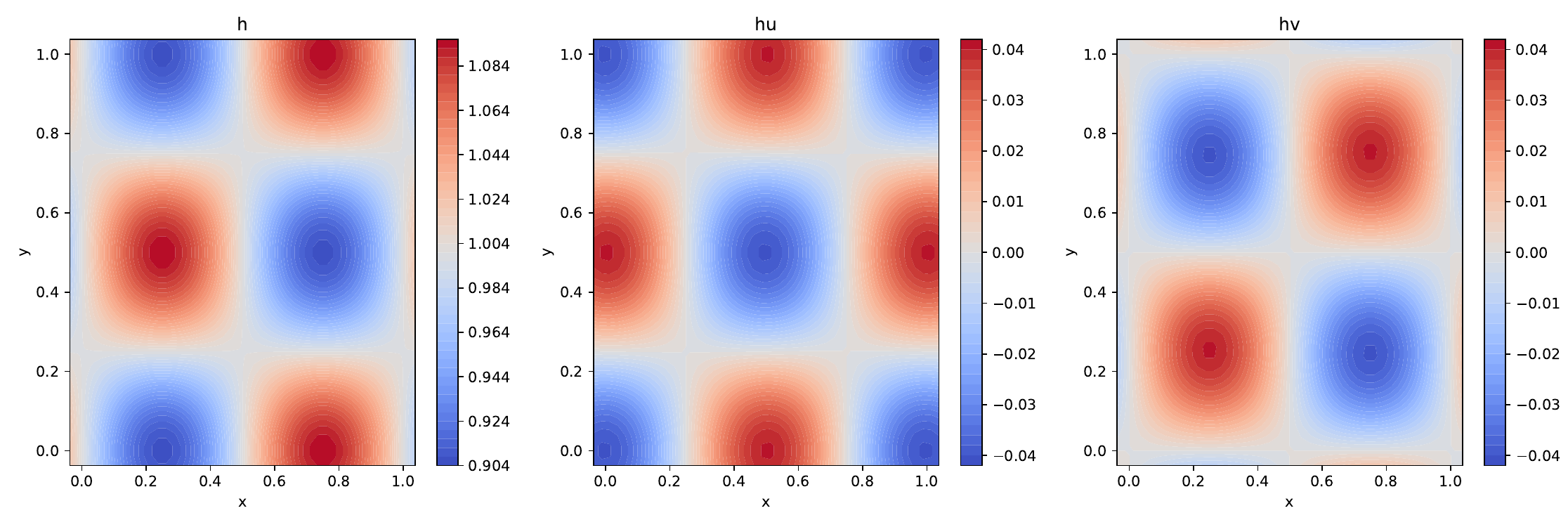}}\qquad
  \subfigure[FV-2]{\includegraphics[width=0.9\textwidth,height=0.25\textwidth,trim={0cm 0cm 13cm 0cm},clip]{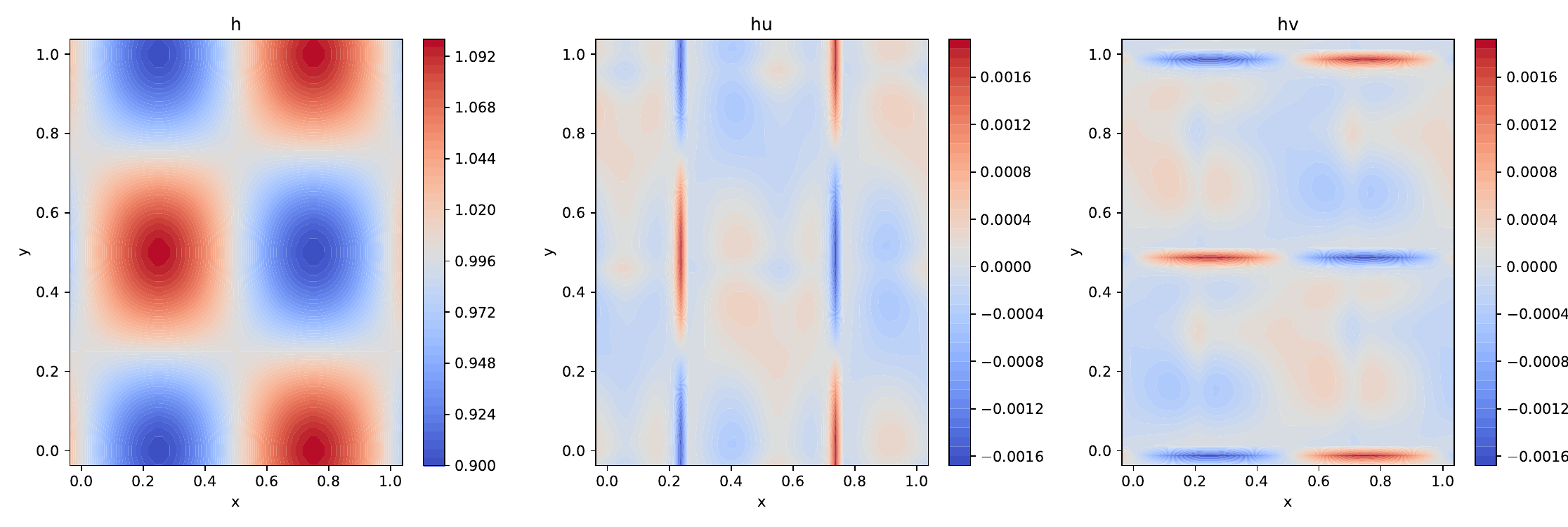}}\qquad
  \subfigure[GF]{\includegraphics[width=0.9\textwidth,height=0.25\textwidth,trim={0cm 0cm 13cm 0cm},clip]{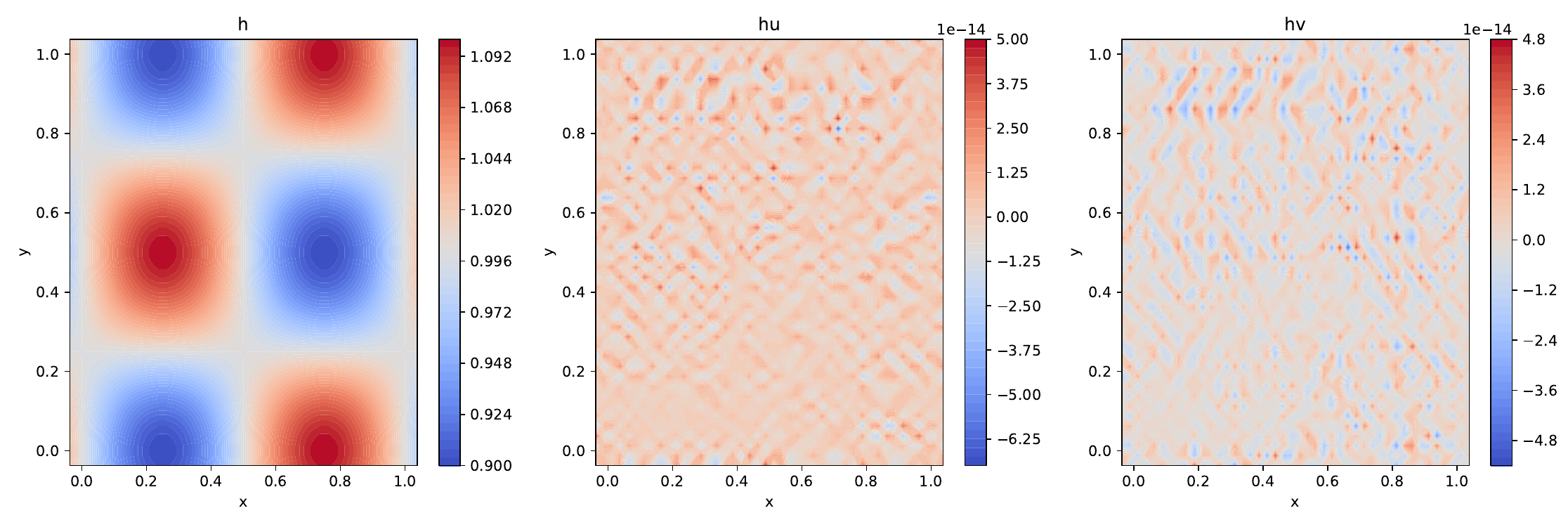}}
  \caption{Shallow water system: lake at rest. Numerical results for the lake at rest solution on the coarse mesh $40\times 40$ at final time $t_f=0.1$ obtained with FV-1, FV-2 and GF.}
  \label{fig:lar}  
\end{figure}

\subsubsection{2D supercritical \RII{moving water} equilibria} 

We consider two fully multi-dimensional \RII{moving water}
steady states of the shallow water system, characterized by constant momentum in 
supercritical regimes. However, contrary to the one-dimensional version of such equilibria \cite{ciallella2023arbitrary,ciallella2025high},
no exact solution is known for the simulations presented in this section. 
For this reason, the numerical results obtained through the three schemes will be compared qualitatively.
The problems are simulated on a rectangular domain $[0,25]\times[0,8]$, and are made fully multi-dimensional
by employing a 2D bathymetry that is a function of both $x$ and $y$, given by
\begin{align*}
b(x,y) = \begin{cases}\frac15 \left(1 - \left(\frac{r(x,y)}{2}\right)^2\right),\quad\text{where}\quad r(x,y)<2 \\
0,\quad\text{elsewhere}
\end{cases} 
\end{align*}
with $r(x,y) = \sqrt{(x-x_0)^2 + (y-y_0)^2}$ and $(x_0,y_0) = (10,4)$.
The initial conditions of the first problem are given by
\begin{alignat*}{3}
&h(x,y,0) = 2 - b(x,y),\qquad &&q_x(x,y,0) = 24, \qquad &&q_y(x,y,0) = 0. 
\end{alignat*}
Inlet boundary conditions (equal to the initial conditions) are imposed on the left boundary of the domain, 
and outlet (homogeneous Neumann) on the right. Top and bottom of the domain are periodic boundaries.
In figure \ref{fig:SW_super}, we present the numerical solutions for the conservative variables  
when the numerical steady state is reached (time residual close to machine precision).
All simulations are performed on a mesh of $450\times 450$ elements.
GF is able to capture and resolve sharply
the many shocks appearing behind the bathymetry bump. 
Although the mesh resolution for this case is quite fine, FV-1 still presents a highly diffused result.
An improvement is experienced when using the linear reconstruction for FV-2, although all the waves 
still appear as smooth transitions. However, the results of the classical methods still remain significantly inferior to those of GF, which captures
all waves sharply in much fewer cells. 
\begin{figure}
  \centering
  \begin{minipage}{0.42\textwidth}
  	\centering $h$, FV-1\\
  	\includegraphics[width=\textwidth, height=0.31\textwidth,trim={0 0 0 0},clip]{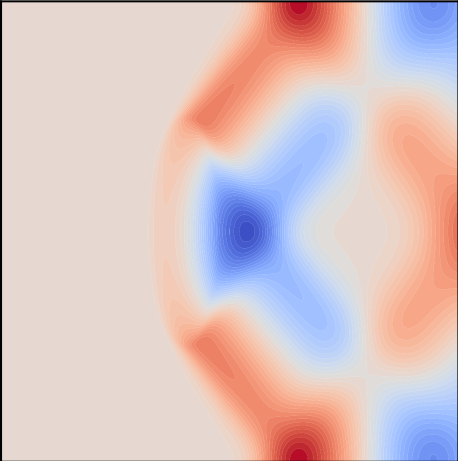}
  	\centering $h$, FV-2\\
	\includegraphics[width=\textwidth, height=0.31\textwidth,trim={0 0 0 0},clip]{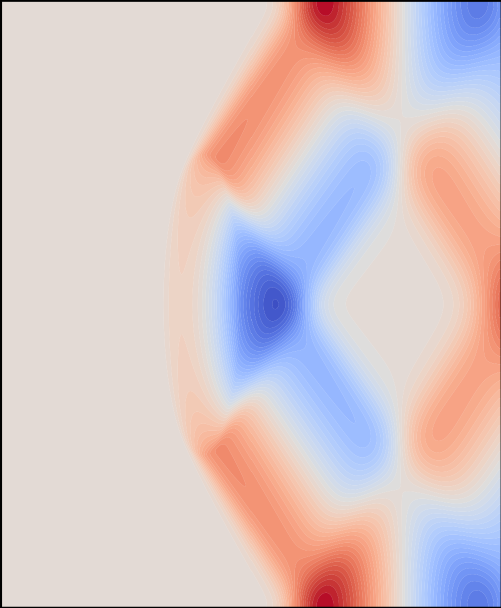}
  	\centering $h$, GF\\
  	\includegraphics[width=\textwidth, height=0.31\textwidth,trim={0 0 0 0},clip]{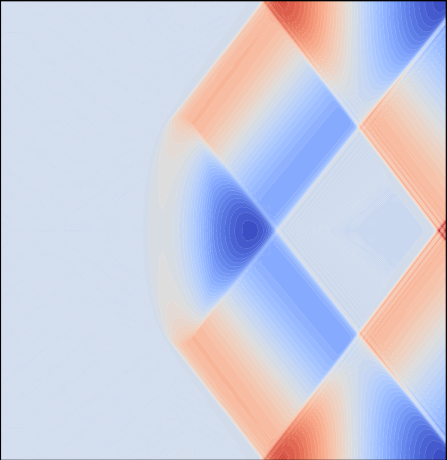}
  \end{minipage}\,
  \begin{minipage}{0.05\textwidth}
  	\includegraphics[width=\textwidth,trim={0 0 0 0},clip]{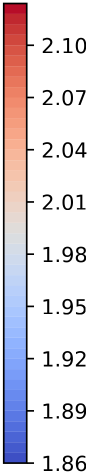}
  \end{minipage}\hfill
  \begin{minipage}{0.42\textwidth}
  	\centering $hv$, FV-1\\
  	\includegraphics[width=\textwidth, height=0.31\textwidth,trim={0 0 0 0},clip]{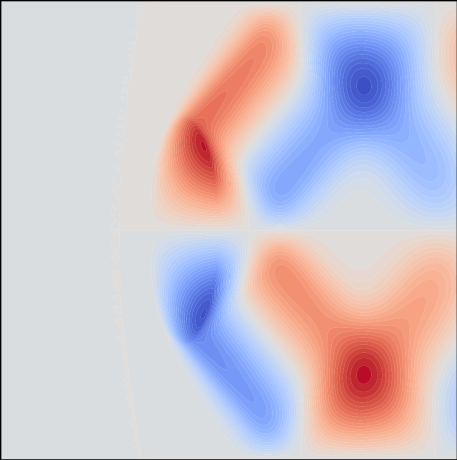}
  	\centering $hv$, FV-2\\
  	\includegraphics[width=\textwidth, height=0.31\textwidth,trim={0 0 0 0},clip]{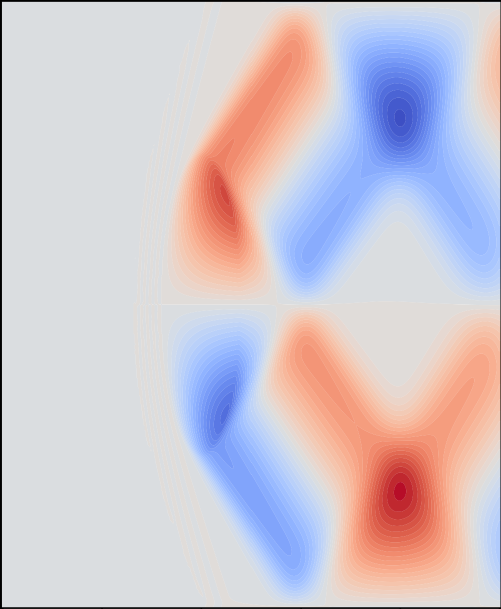}
  	\centering $hv$,   GF\\
  	\includegraphics[width=\textwidth, height=0.31\textwidth,trim={0 0 0 0},clip]{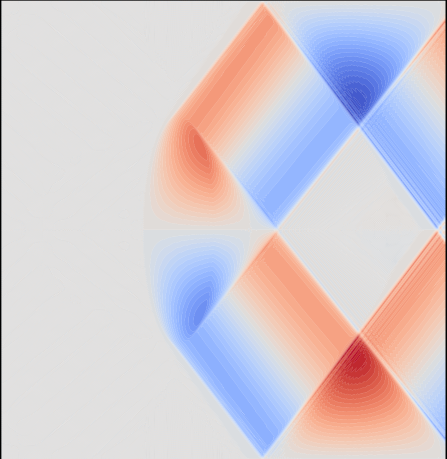}
  \end{minipage}\,
  \begin{minipage}{0.05\textwidth}
  	\includegraphics[width=\textwidth,trim={0 0 0 0},clip]{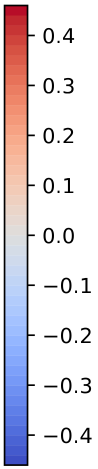}
  \end{minipage}
  \caption{Shallow water system: 2D supercritical equilibria. Numerical results obtained with FV-1, FV-2 and GF to steady state for $N_x=N_y=450$.}
  \label{fig:SW_super}
\end{figure}
%

\begin{figure}
	\centering
	\begin{minipage}{0.42\textwidth}
		\centering $h-h_{eq}$, FV-1 $N_x=150,N_y=50$\\
		\includegraphics[width=\textwidth, height=0.31\textwidth,trim={13.6mm 14.9mm 415mm 8mm},clip]{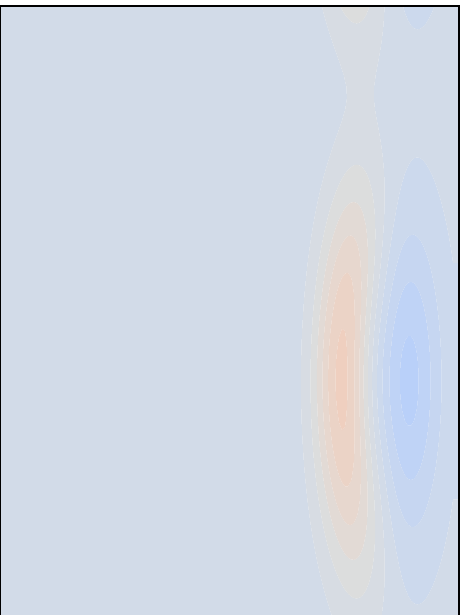}
		\centering $h-h_{eq}$, FV-2 $N_x=150,N_y=50$\\
		\includegraphics[width=\textwidth, height=0.31\textwidth,trim={13.6mm 14.9mm 415mm 8mm},clip]{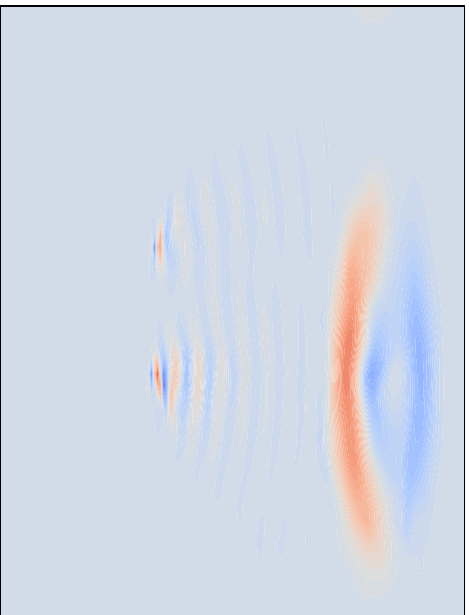}
	\end{minipage}\,
	\begin{minipage}{0.42\textwidth}
		\centering $h-h_{eq}$, GF $N_x=150,N_y=50$\\
		\includegraphics[width=\textwidth, height=0.31\textwidth,trim={13.6mm 14.9mm 415mm 8mm},clip]{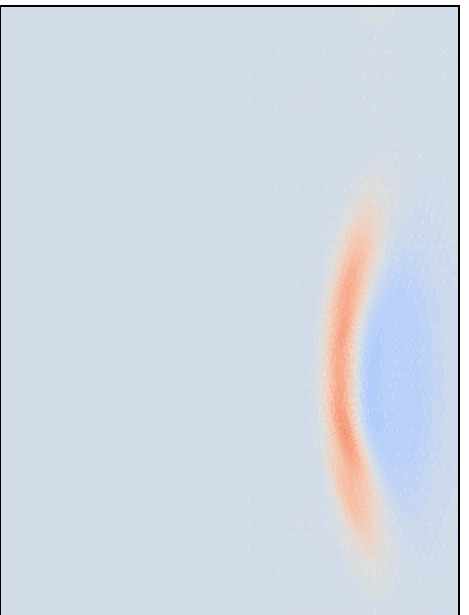}
		\centering $h-h_{eq}$, FV-1 $N_x=450,N_y=450$\\
		\includegraphics[width=\textwidth, height=0.31\textwidth,trim={13.6mm 14.9mm 415mm 8mm},clip]{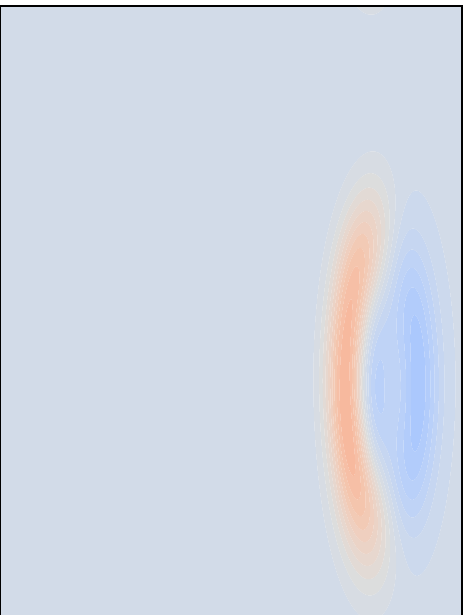}
	\end{minipage}\,
	\begin{minipage}{0.1\textwidth}
		\includegraphics[width=\textwidth,trim={92.5mm 14.2mm 378mm 0mm},clip]{SW_super2D_simul_PERT_FV_Nx450_Ny450_T0_4_order_1.pdf}
	\end{minipage}
	\caption{\RII{Shallow water system: perturbation of 2D supercritical equilibria. Difference between the solution and the equilibrium for different methods on the mesh $N_x=150,\,N_y=50$ and reference solution with FV on mesh $N_x=450,\,N_y=450$}}\label{fig:sw_eq_pert}
\end{figure}
\RII{In figure~\ref{fig:sw_eq_pert}, we show a perturbation of the numerical equilibrium obtained above. We add a small drop of water shaped as 
$$
\delta h(x,y) = 10^{-4}e^{-\frac{(x-16)^2 + (y-3)^2}{0.8^2}}
$$
on the $h$ variable and we continue the simulation until $t_f=0.4$. We plot the difference between each numerical equilibrium and solutions for all the methods on a coarse grid $N_x=150,\,N_y=50$, while we run a finer test for the FV-1 test case on the grid $N_x=450,\,N_y=450$. 
Looking at the simulation of FV-1 on the coarse mesh, we barely see the perturbation as the method is too dissipative. For the FV-2 simulations, the perturbation is visible, but small oscillations are present due to the fact that the method does not reach proper time convergence (the time residual stays around $10^{-6}$).
On the other hand, the GF nicely shows the perturbation moving towards the right without any spurious oscillations.}

As a second problem, and in order to test the robustness of the method, we also consider a crooked supercritical equilibrium 
with the same bathymetry but different initial conditions: 
\begin{alignat*}{3}
&h(x,y,0) = 2 - b(x,y),\qquad &&q_x(x,y,0) = 24, \qquad &&q_y(x,y,0) = 4\pi .
\end{alignat*}
In this case, left and bottom boundaries are inlet boundaries, while right and top are outlets.
It should be noticed, that this test case is even more challenging than the one shown before since no part of the fluid is aligned with the background Cartesian mesh.
In figure \ref{fig:SW_super_storto}, the results obtained for the conservative variables are presented,
where the same conclusion about the quality of the result of the GF method can be drawn as for the previous test. 
All the physical features of the equilibria are well captured, while they
are significantly more diffused by the FV-1 and FV-2 methods. 
\begin{figure}

  \centering
  \begin{minipage}{0.42\textwidth}
  	\centering $h$, FV-1\\
  	\includegraphics[width=\textwidth, height=0.31\textwidth]{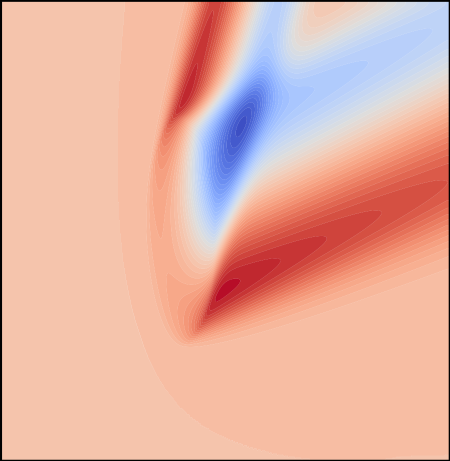}
  	\centering $h$, FV-2\\
  	\includegraphics[width=\textwidth, height=0.31\textwidth]{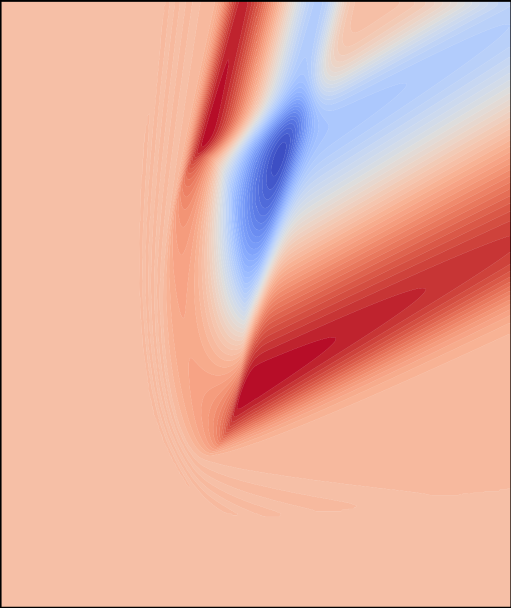}
  	\centering $h$, GF\\
  	\includegraphics[width=\textwidth, height=0.31\textwidth]{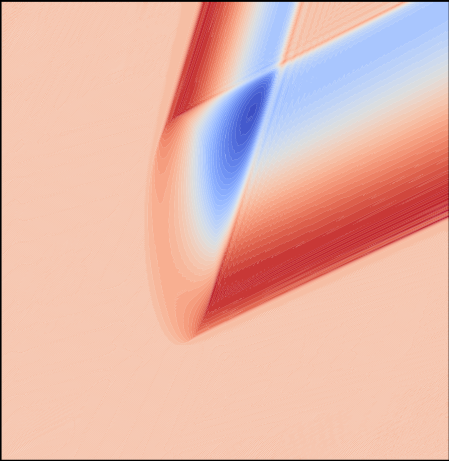}
  \end{minipage}\,
  \begin{minipage}{0.05\textwidth}
  	\includegraphics[width=\textwidth]{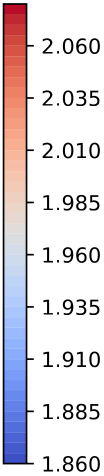}
  \end{minipage}\hfill
  \begin{minipage}{0.42\textwidth}
  	\centering $hv$, FV-1\\
  	\includegraphics[width=\textwidth, height=0.31\textwidth]{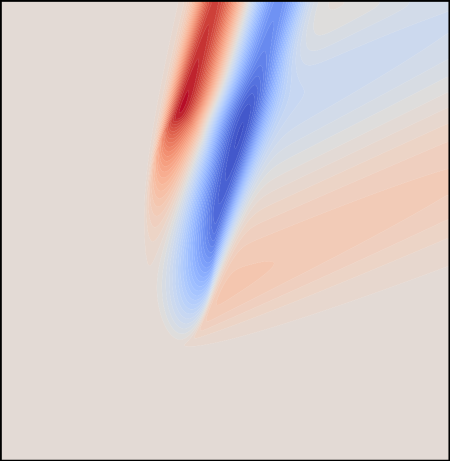}
  	\centering $hv$, FV-2\\
  	\includegraphics[width=\textwidth, height=0.31\textwidth]{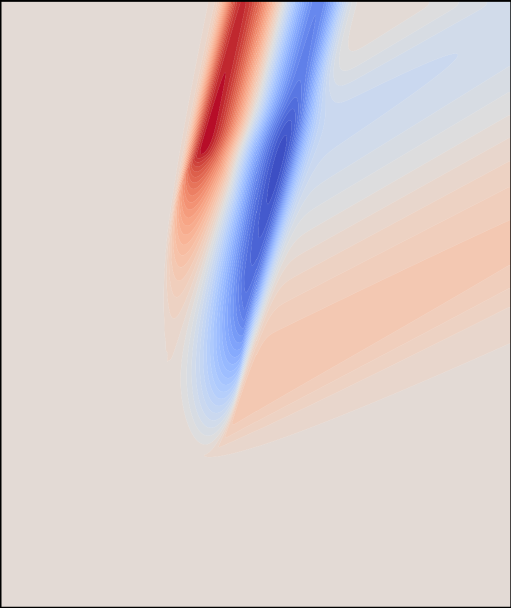}
  	\centering $hv$,   GF\\
  	\includegraphics[width=\textwidth, height=0.31\textwidth]{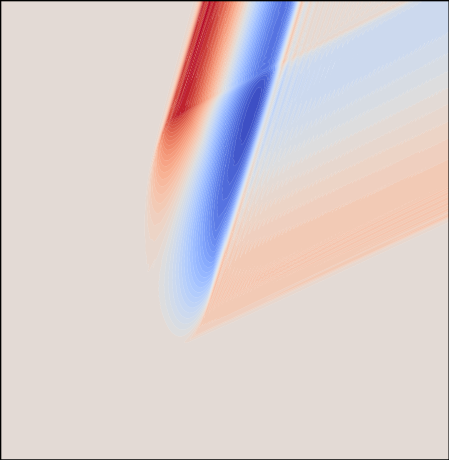}
  \end{minipage}\,
  \begin{minipage}{0.05\textwidth}
  	\includegraphics[width=\textwidth]{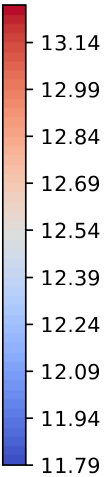}
  \end{minipage}
  \caption{Shallow water system: 2D crooked supercritical equilibria. Numerical results  obtained with FV-1, FV-2 and GF to steady state for $N_x=N_y=450$. }
  \label{fig:SW_super_storto}
\end{figure}

\section{Conclusions and perspectives}\label{sec:conclusions}

In this work, we have presented a new way to derive finite volume methods for nonlinear multi-dimensional hyperbolic systems, which is based on the global flux approach \eqref{eq:conservationlawglobalflux}, introduced in \cite{barsukow2025structure}.
It is a general way to obtain stationarity preserving schemes for nonlinear problems.
Besides its generality,
the method is also able to achieve super-convergence on steady problems, \RI{with error reductions of one or two orders of magnitude compared to
a standard second order finite volume approach.
Despite a focus on stationary states during its design, we observe  
remarkably high resolution   for unsteady multi-dimensional problems, outperforming standard first and even second-order finite volume methods,
for a large span  of  Mach/Froude  numbers.}

This work opens the way to several future developments. 
In particular, the extension of the finite volume formulation to high order methods by using
high-degree polynomial reconstruction techniques like WENO \cite{jiang1999high} is a natural next step,
following the work on the 1D global flux WENO approach introduced in \cite{ciallella2023arbitrary}.
Moreover, the first order finite volume method can also be seen as the starting point to develop
a new family of multi-dimensional high order discontinuous Galerkin methods based on the 
global flux formulation.
More investigations will also be dedicated to the extension of the method to deal with 
mathematical models characterized by curl-free solutions, like the Maxwell equations.  
Extending, for instance, the observation that stationarity preserving methods are also low Mach number compliant,
theoretical work will include further analysis of the method in unsteady situations.

\bibliographystyle{plain}
\bibliography{literature}

\end{document}